\numberwithin{equation}{section}
\newtheorem{thm}{Theorem}[section]
\newtheorem{prop}[thm]{Proposition}
\newtheorem{cor}[thm]{Corollary}
\newtheorem{exam}[thm]{Example}
\newtheorem{rem}[thm]{Remark}
\newtheorem{lem}[thm]{Lemma}
\newtheorem{defn}[thm]{Definition}
\newtheorem{assum}[thm]{Assumption}
\def\supp{{\rm supp}}
\begin{document}
\title{Limiting distributions for the maximal displacement of branching Brownian motions}
\author{Yasuhito Nishimori\thanks{Department of General Education,
National Institute of Technology, Anan College,
Anan, Tokushima, 774-0017, Japan; 
\texttt{nishimori@anan-nct.ac.jp}}
\ and 
Yuichi Shiozawa\thanks{Department of Mathematics,
Graduate School of Science,
Osaka University, Toyonaka, Osaka, 560-0043,
Japan; \texttt{shiozawa@math.sci.osaka-u.ac.jp}}
\thanks{Supported in part by JSPS KAKENHI No.\ JP17K05299.}}
\maketitle

\begin{abstract}
We determine the long time behavior 
and the exact order of the tail probability for the maximal displacement 
of a branching Brownian motion in Euclidean space 
in terms of the principal eigenvalue of the associated Schr\"odinger type operator.  
To establish our results, 
we show a sharp and locally uniform growth order of the Feynman-Kac semigroup.   
\end{abstract}
\section{Introduction}

We studied in \cite{S18, S19} asymptotic properties related to the maximal displacement  
for a branching Brownian motion on ${\mathbb R}^d$ 
with spatially inhomogeneous branching structure. 
In particular, we determined the linear growth rate and   
the exponential decay rate of the tail probability 
for the maximal displacement 
in terms of the principal eigenvalue of the associated Schr\"odinger type operator. 
In this paper, we investigate the second growth order and  
the exact order of the tail probability including the critical case. 
We also study the conditional limiting distribution of the population 
outside the forefront.

Branching Brownian motions are a stochastic model 
describing the evolution of Brownian particles with reproduction.  
It is natural and interesting to investigate the interaction between 
the randomness of the reproduction and that of the particle motions. 

For instance, 
the spatial asymptotic distribution of particles is 
an expression of such interaction and characterized in terms of the principal eigenvalue and eigenfunction 
of the Schr\"odinger type 
operator associated with the branching structure  
(see, e.g., \cite{CRY17,CS07,EHK10,W67} for more general branching Markov processes). 

Here we are concerned with the {\it maximal displacement}, 
which is the trajectory of the maximal Euclidean norm of particles. 
Even though each particle obeys the law of the iterated logarithm, 
the maximal displacement may grow faster than a single Brownian particle 
because of the population growth. 
Let $L_t$ be the maximal displacement at time $t$. 
For simplicity, we assume the binary branching and that 
the splitting time is exponentially distributed with rate $1$.
We also assume that the initial state is a single particle at the origin 
and denote by ${\mathbf P}$ the law of this process. 
Bramson \cite{B78} ($d=1$, see also \cite{R13} for a simplified proof) 
and Mallein \cite{M15} ($d\geq 2$) proved that 
$L_t$ is expressed  as
\begin{equation}\label{eq:intro-asymp-u}
L_t=\sqrt{2}t+\frac{d-4}{2\sqrt{2}}\log t+Y_t,
\end{equation}
where $\{Y_t\}_{t\geq 0}$ is a real valued stochastic process 
such that $\lim_{k\rightarrow\infty}\sup_{t\geq 0}{\mathbf P}(|Y_t|\geq k)=0$.
This result says that $L_t$ grows linearly and the second order depends on the spatial dimension $d$. 
We note that Kyprianou \cite{K05} already obtained the linear growth rate of $L_t$ for $d\geq 2$. 
See also \cite{SBM20} for the weak convergence of a family of direction-wise derivative martingales 
which would be related to the direction of the extremal particle.  
For $d=1$, Chauvin and Rouault \cite{CR88, CR90} further determined 
the decay rate of 
the tail probability ${\mathbf P}(L_t>\delta t)$ as $t\rightarrow\infty$ for $\delta\geq \sqrt{2}$. 
For $\delta>\sqrt{2}$ especially, this probability is asymptotically equivalent 
to the expected population on the set $\{y\in {\mathbb R}\mid |y|>\delta t\}$ as $t\rightarrow\infty$, 
which is similar to the subcritical Galton-Watson process 
(see, e.g., \cite[Chapter I-8]{AN04} and \cite[Section 5]{R00}). 
We should mention that Bramson \cite{B78} and Chauvin and Rouault \cite{CR88,CR90} 
discussed the asymptotic properties of the rightmost particle for $d=1$, 
but their results immediately yield the corresponding ones for the maximal norm.  

Our purpose in this paper is to study  the maximal displacement 
for a {\it spatially inhomogeneous model}.  
Namely, the offspring distribution ${\mathbf p}$ is state dependent 
and the splitting time distribution is given 
by a Kato class measure $\mu$ on ${\mathbb R}^d$:
the splitting time of each particle is proportional to the size 
of $\mu$ along the trajectory 
(see Subsections \ref{subsect:kato-fk} and \ref{subsect:branching} for details) 
and particles reproduce only on the support of $\mu$.  
Here we assume that $\mu$ has compact support, that is, particles can not reproduce 
outside the compact set. 
Let $Q(x)$ be the expected offspring number at $x\in{\mathbb R}^d$. 
We can then regard the measure $\nu({\rm d}x):=(Q(x)-1)\mu({\rm d}x)$ 
as the branching intensity. 
Let $\lambda$ be the bottom of the spectrum of the Schr\"odinger type operator 
${\cal H}^{\nu}:=-\Delta/2-\nu$. 
We also assume that $\lambda<0$, that is, 
$\lambda$ is the principal eigenvalue of ${\cal H}^{\nu}$ 
and the branching intensity is strong enough. 
Under some Kato class and the second moment conditions on ${\mathbf p}$ 
(see Assumption \ref{assum:b} for details), 
we have the following assertions.  
\begin{enumerate}
\item[(i)] Let $\{Y_t\}_{t\geq 0}$ be a real valued stochastic process defined by 
\begin{equation*}
L_t=\sqrt{\frac{-\lambda}{2}}t+\frac{d-1}{2\sqrt{-2\lambda}}\log t + Y_t.
\end{equation*}
Then the conditional distribution of $\{Y_t\}_{t\geq 0}$ on the regular growth event 
is convergent to the Gumbel distribution shifted by some limiting martingale 
(see Theorem \ref{thm:lim-dist} and a subsequent comment for details). 
\item[(ii)] For any $\delta \in (\sqrt{-\lambda/2} , \sqrt{-2\lambda})$, 
there exists some explicit positive constant $C_1$ such that
\begin{equation*}
{\mathbf P}(L_t>\delta t)\sim 
C_1e^{(-\lambda-\sqrt{-2\lambda} \delta) t}t^{(d-1)/2} \ (t\rightarrow\infty).
\end{equation*}
There exists some explicit positive constant $C_2$ such that for any $\gamma\geq d-1$, 
\begin{equation*}
{\mathbf P}\left(L_t>\sqrt{\frac{-\lambda}{2}}t+\frac{\gamma}{2\sqrt{-2\lambda}}\log t+b(t)\right)
\sim C_2e^{-\sqrt{-2\lambda}b(t)}t^{(d-1-\gamma)/2} \ (t\rightarrow\infty).
\end{equation*}
Here $b(t)$ is a positive function on $(0,\infty)$ such that $b(t)=o(\log t) \ (t\rightarrow\infty)$, 
and $b(t)\rightarrow\infty \ (t\rightarrow\infty)$ if $\gamma=d-1$ 
(see Theorem \ref{thm:equiv}, \eqref{eq:tail-asymp} and \eqref{eq:tail-asymp-1} and for details). 
\end{enumerate}
For $d=1$, these assertions remain true for the trajectory of the rightmost particle 
(see Remark \ref{rem:dirac}). 

As we see from (i), 
the second order of $L_t$ is logarithmic and dependent of the spatial dimension $d$ 
as \eqref{eq:intro-asymp-u} for the spatially uniform model. 
The first order of $L_t$ was already obtained by \cite{BH14,E84,S18}. 
We also determine in (ii) the asymptotic behavior of the tail probability of $L_t$ 
for the subcritical and critical phases with respect to the linear growth rate.  
As a corollary of Theorem \ref{thm:equiv}, 
we determine the conditional limiting distribution 
of the population outside the forefront (Corollary \ref{thm:yaglom}). 
Note that if $\delta\geq \sqrt{-2\lambda}$ and 
non-extinction occurs, 
then we know by \cite{S19} that the effect of the reproduction does not appear  
in the behavior of ${\mathbf P}(L_t>\delta t)$ 
because particles can not reproduce outside a compact set 
and $\delta$ is large enough 
(see \eqref{eq:no-effect} for details).

Lalley and Sellke \cite{LS88} (see also \cite{LS89} for further results) proved the assertion in (i)
for $d=1$ provided that $\mu$ is finite and has a continuous density function 
with respect to the Lebesgue measure; 
however, no restriction is imposed on the support of $\mu$.
They compared the forefront of the branching Brownian motion 
with that of a Brownian particle system driven by a time-space Poisson point process 
(\cite[Section 3]{LS88}). 
They used the absolute continuity condition on $\mu$  
to express the intensity measure of the birth points of branching Brownian particles 
(\cite[(5.7)]{LS88}). 
Bocharov and Harris \cite{BH14, BH16} 
(see also \cite{BW19, WZ20} for related results) also obtained the assertion in (i)
and the exponential order of ${\mathbf P}(L_t>\delta t)$  
for the so called catalytic branching Brownian motion 
in which $d=1$ and $\mu$ is a multiple of the Dirac measure at the origin. 
Their approach is based on the moment calculus of the population.  
In fact, they compute the moments by using the explicit form of the joint distribution 
of the Brownian motion and its local time established by \cite{KS84}.  
Under the same setting as in the present  paper, we obtained in \cite[Theorem 3.7]{S19} 
a partial result on the polynomial order of the tail probability of $L_t$. 
To do so, we established its Feynman-Kac expression 
and used a less sharp estimate of the Feynman-Kac semigroups. 
This approach is similar to that of \cite{CR88,CR90} for the spatially homogeneous model,
for which the Feynman-Kac expression was obtained by McKean \cite{Mc75,Mc76}.

On the other hand, Carmona and Hu \cite{CH14} and Bulinskaya \cite{B20} studied 
the limiting distribution of the maximal displacement 
for the catalytic branching random walk on the integer lattice. 
They can utilize the renewal theory as the state space is discrete and 
particles reproduce only on the finite number of points.

Here we develop the moment calculus of the population as in \cite{BH16}. 
As we see from Lemma \ref{lem:moment} below, 
the population moments are expressed in terms of 
the Feynman-Kac semigroup associated with ${\mathbf p}$ and $\mu$. 
An important step is to reveal the precise and locally uniform long time behavior 
of the Feynman-Kac semigroup (\eqref{eq:both} and \eqref{eq:both-1}). 
This provides the detailed asymptotics of the first and second population moments. 
Combining this with the Chebyshev and Paley-Zygmund inequalities (see \eqref{eq:moment}), 
we obtain the limiting behaviors of the tail probabilities of $L_t$ 
local uniformly with respect to the initial point. 
We can then follow the argument of \cite{CH14} and \cite{B20} 
to establish Theorem \ref{thm:lim-dist}. 
Theorem \ref{thm:equiv} is also proved 
by using the asymptotics of the population moments.

To achieve the step as mentioned above, 
we use the Poincar\'e inequality \eqref{eq:poincare} 
for the Feynman-Kac semigroup, 
which was already applied in \cite{CRY17,CS07} to the limit theorem 
for branching Markov processes. 
We emphasize that the moment calculus of the population is feasible 
by using the principal eigenvalue and spectral gap of ${\cal H}^{\nu}$.  
The price is to impose Kato class and compact support conditions on ${\mathbf p}$ and $\mu$; 
the former condition guarantees the existence of the spectral gap for ${\cal H}^{\nu}$ 
and thus the Poincar\'e inequality, 
and the latter allows us to utilize the locally uniform long time behavior of the Feynman-Kac semigroup.   
We also need the existence of the second moment of ${\mathbf p}$ 
to derive the lower bound of the tail probability of $L_t$ 
by using \eqref{eq:moment}.

The rest of this paper is organized as follows. 
In Section \ref{sect:pre-results}, we first introduce the model of branching Brownian motions. 
We then present our results and their applications to the concrete models.   
In Section \ref{sect:fk-est}, we derive the long time asymptotic properties of Feynman-Kac semigroups. 
The subsequent sections are devoted to the proofs of the results presented in Section \ref{sect:pre-results}. 
In Appendix \ref{appendix:int}, we give a part on the elementary calculation in Section \ref{sect:fk-est}.

Throughout this paper, the letters $c$ and $C$ (with subscript) 
denote finite positive constants which may vary from place to place. 
For positive functions $f(t)$ and $g(t)$ on $(0,\infty)$, we write 
$f(t)\asymp g(t) \ (t\rightarrow\infty)$ if there exist positive constants $T$, $c_1$ and $c_2$ such that 
$c_1g(t)\leq f(t)\leq c_2g(t)$ for all $t\geq T$.  
We also write $f(t)\sim  g(t) \ (t\rightarrow \infty)$ if $f(t)/g(t)\rightarrow 1 \ (t\rightarrow\infty)$.

\section{Preliminaries and results}\label{sect:pre-results}
\subsection{Kato class measures and Feynman-Kac semigroups}\label{subsect:kato-fk}
Let 
${\mathbf M}=(\Omega, {\cal F}, \{{\cal F}_t\}_{t\geq 0}, 
\{B_t\}_{t\geq 0}, \{P_x\}_{x\in {\mathbb R}^d}, \{\theta_t\}_{t\geq 0})$ 
be the Brownian motion on ${\mathbb R}^d$, 
where $\{{\cal F}_t\}_{t\geq 0}$  is the minimal augmented admissible filtration 
and $\{\theta_t\}_{t\geq 0}$  is the time shift operator of paths 
such that $B_s\circ\theta_t=B_{s+t}$ identically for $s,t\geq 0$. 
Let $p_t(x,y)$ be the transition density function of ${\mathbf M}$ given by 
$$p_t(x,y)=\frac{1}{(2\pi t)^{d/2}}\exp\left(-\frac{|x-y|^2}{2t}\right),
\ (t,x,y)\in (0,\infty)\times {\mathbb R}^d\times {\mathbb R}^d.$$
For $\alpha>0$, let $G_{\alpha}(x,y)$ be 
the $\alpha$-resolvent of ${\mathbf M}$ given by 
$$G_{\alpha}(x,y)=\int_0^{\infty}e^{-\alpha t}p_t(x,y)\,{\rm d}t.$$
Then 
\begin{equation}\label{eq:resolvent}
G_{\alpha}(x,y)\sim \frac{1}{\sqrt{2\alpha}}\left(\frac{\sqrt{2\alpha}}{2\pi |x-y|}\right)^{(d-1)/2}
e^{-\sqrt{2\alpha}|x-y|} \ (|x-y|\rightarrow\infty)
\end{equation}
(see, e.g., \cite[(2.1)]{S18} and references therein). 
For $d\geq 3$, let $G(x,y)$ be the Green function of ${\mathbf M}$ defined by 
$$G(x,y)=\int_0^{\infty}p_t(x,y)\,{\rm d}t$$
and $G_0(x,y):=G(x,y)$.

\begin{defn} 
\begin{itemize}
\item[{\rm (i)}] Let $\mu$ be a positive Radon measure on ${\mathbb R}^d$. 
Then $\mu$ belongs to the Kato class {\rm (}$\mu\in {\cal K}$ in notation{\rm )} if 
$$\lim_{\alpha\rightarrow\infty}
\sup_{x\in {\mathbb R}^d}\int_{{\mathbb R}^d}G_{\alpha}(x,y)\,\mu({\rm d}y)=0.$$
\item[{\rm (ii)}] For $\beta>0$, a measure $\mu\in {\cal K}$ is $\beta$-Green tight 
{\rm (}$\mu\in {\cal K}_{\infty}(\beta)$ in notation{\rm )} if
$$\lim_{R\rightarrow\infty}\sup_{x\in {\mathbb R}^d}\int_{{|y|\geq R}}G_{\beta}(x,y)\,\mu({\rm d}y)=0.$$ 
When $d\geq 3$, $\mu\in {\cal K}$ belongs to ${\cal K}_{\infty}(0)$ if 
the equality above is valid for $\beta=0$.
\end{itemize}
\end{defn}

We know by \cite{T08} that for any $\beta>0$, ${\cal K}_{\infty}(\beta)$ is independent of $\beta$. 
Any Kato class measure with compact support is $1$-Green tight by definition. 

For $\mu\in{\cal K}$, 
let $A_t^{\mu}$ be a positive continuous additive functional 
in the Revuz correspondence to $\mu$ (see, e.g., \cite[p.401]{FOT11} for details). 
For instance, if $\mu({\rm d}x)=V(x)\,{\rm d}x$ for some nonnegative function $V(x)\geq 0$, 
then $A_t^{\mu}=\int_0^t V(B_s)\,{\rm d}s$. 

Let $\nu$ be a signed measure on ${\mathbb R}^d$ such that 
$\nu=\nu^{+}-\nu^{-}$ for some $\nu^+,\nu^{-}\in {\cal K}$ 
and let $A_t^{\nu}=A_t^{\nu^+}-A_t^{\nu^-}$.
We define the Feynman-Kac semigroup $\{p_t^{\nu}\}_{t>0}$ by 
$$p_t^{\nu}f(x):=E_x\left[e^{A_t^{\nu}}f(B_t)\right], 
\  f\in {\cal B}_b({\mathbb R}^d)\cap L^2({\mathbb R}^d),$$
where ${\cal B}_b({\mathbb R}^d)$ stands for the totality of bounded Borel measurable functions on ${\mathbb R}^d$. 
Then $\{p_t^{\nu}\}_{t>0}$ forms 
a strongly continuous symmetric semigroup on $L^2({\mathbb R}^d)$ 
and the associated $L^2$-generator is formally written 
as the Schr\"odinger type operator ${\cal H}^{\nu}=-\Delta/2-\nu$. 
We can further extend  $\{p_t^{\nu}\}_{t>0}$ to $L^p({\mathbb R}^d)$ 
for any $p\in [1,\infty]$ (\cite[Theorem 6.1 (i)]{ABM91}). 
We use the same notation $\{p_t^{\nu}\}_{t>0}$ for the extended semigroups. 

By \cite[Theorems 7.1 and 8.1]{ABM91}, 
there exists a jointly continuous integral kernel $p_t^{\nu}(x,y)$ 
on $(0,\infty)\times {\mathbb R}^d\times {\mathbb R}^d$ 
such that 
\begin{equation*}
p_t^{\nu}f(x)=\int_{{\mathbb R}^d}p_t^{\nu}(x,y)f(y)\,{\rm d}y, 
\  f\in {\cal B}_b({\mathbb R}^d)
\end{equation*}
and 
\begin{equation}\label{eq:heat-comp}
p_t^{\nu}(x,y)\leq c_1 p_{c_2 t}(x,y),  \ (t,x,y)\in (0,1]\times {\mathbb R}^d\times {\mathbb R}^d
\end{equation}
for some $c_1>0$ and $c_2>0$. 
Moreover, there exists $\beta(\nu)>0$ by \cite[Theorem 6.1]{ABM91} 
such that for any $\alpha>\beta(\nu)$, 
we can associate the resolvent $\{G_{\alpha}^{\nu}\}_{\alpha>\beta(\nu)}$ 
given by   
\begin{equation*}
G_{\alpha}^{\nu}f(x)=\int_0^{\infty}e^{-\alpha t}p_t^{\nu}f(x)\,{\rm d}t
=E_x\left[\int_0^{\infty} e^{-\alpha t+A_t^{\nu}}f(B_t)\,{\rm d}t\right].
\end{equation*}

Let $\nu^+,\nu^{-}\in {\cal K}_{\infty}(1)$ and $\nu=\nu^{+}-\nu^{-}$.   
Denote by $\sigma({\cal H}^{\nu})$ 
the spectrum for ${\cal H}^{\nu}$ 
and $\lambda(\nu):=\inf\sigma({\cal H}^{\nu})$. 
Then 
\begin{equation*}
\lambda(\nu)=
\inf\left\{\frac{1}{2}\int_{{\mathbb R}^d}|\nabla u|^2\,{\rm d}x
-\int_{{\mathbb R}^d}u^2\,{\rm d}\nu \mid 
u\in C_0^{\infty}({\mathbb R}^d), \int_{{\mathbb R}^d}u^2\,{\rm d}x=1\right\},
\end{equation*}
where $C_0^{\infty}({\mathbb R}^d)$ stands for 
the totality of smooth functions on ${\mathbb R}^d$ with compact support. 
If $\lambda(\nu)<0$, then $\lambda(\nu)$ is the principal eigenvalue of ${\cal H}^{\nu}$ 
(\cite[Lemma 4.3]{T03} or \cite[Theorem 2.8]{T08})
and the corresponding eigenfunction has a bounded, 
continuous and strictly positive version (\cite[Section 4]{T08}). 
We write $h$ for this version with $L^2$-normalization $\|h\|_{L^2({\mathbb R}^d)}=1$. 
Hence for any $x\in {\mathbb R}^d$ and $t>0$,
\begin{equation}\label{eq:eigen-eq}
p_t^{\nu}h(x)=E_x\left[e^{A_t^{\nu}}h(B_t)\right]=e^{-\lambda t}h(x).
\end{equation}
If $\nu^{+}$ and $\nu^{-}$ are in addition compactly supported in ${\mathbb R}^d$, 
then by the proof of \cite[Theorem 5.2]{T08} or \cite[Appendix A.1]{S19}, 
there exist positive constants $c_1$ and $c_2$ such that  
\begin{equation}\label{eq:est-eigen}
\frac{c_1e^{-\sqrt{-2\lambda(\nu)}|x|}}{|x|^{(d-1)/2}}\leq h(x)
\leq \frac{c_2e^{-\sqrt{-2\lambda(\nu)}|x|}}{|x|^{(d-1)/2}} 
\quad (|x|\geq 1).
\end{equation}

Let $\lambda_2(\nu)$ be the second bottom of the spectrum for ${\cal H}^{\nu}$:
\begin{equation*}
\lambda_2(\nu)
:=\inf\left\{\frac{1}{2}\int_{{\mathbb R}^d}|\nabla u|^2\,{\rm d}x-\int_{{\mathbb R}^d}u^2\,{\rm d}\nu 
\mid u\in C_0^{\infty}({\mathbb R}^d), 
\int_{{\mathbb R}^d}u^2\,{\rm d}x=1, \int_{{\mathbb R}^d}uh\,{\rm d}x=0\right\}.
\end{equation*}
If $\lambda(\nu)<0$, then $\lambda(\nu)<\lambda_2(\nu)\leq 0$ 
because the essential spectrum of ${\cal H}^{\nu}$ is the interval $[0,\infty)$ 
by \cite[Theorem 3.1]{BEKS94} or \cite[Lemma 3.1]{Be04}, 
Moreover, there exists $C>0$ such that 
for any $t\geq 1/2$, $x\in {\mathbb R}^d$ 
and $\varphi\in L^2({\mathbb R}^d)$ with $\int_{{\mathbb R}^d}\varphi(y) h(y)\,{\rm d}y=0$, 
\begin{equation}\label{eq:poincare}
\left|p_t^{\nu}\varphi(x)\right|\leq Ce^{-\lambda_2(\nu) t}\|\varphi\|_{L^2({\mathbb R}^d)}
\end{equation}
(see, e.g., \cite[Subsection 2.1]{CS07}).

\subsection{Branching Brownian motions}\label{subsect:branching}
In this subsection, we introduce a model of branching Brownian motions  
(see \cite{INW68-1,INW68-2,INW69} and \cite{S18,S19} for details). 
Let $\mu\in {\cal K}$ and let ${\mathbf p}=\{p_n(x)\}_{n=0}^{\infty}$ be 
a probability function on ${\mathbb R}^d$. 
A  Brownian particle with initial site $x\in {\mathbb R}^d$ 
moves until random time $Z$, whose distribution is given by 
\begin{equation*}
P_x(Z>t \mid {\cal F}_{\infty})=e^{-A_t^{\mu}} \quad (t>0).
\end{equation*}
At time $Z$, this particle disappears leaving no offspring with probability $p_0(B_{Z_-})$. 
Otherwise, this particle splits into $n \ (\geq 1)$ particles with probability $p_n(B_{Z-})$,  
and then these new particles with common initial site $B_{Z-}$ 
repeat the same procedure independently.
When there are $n \ (\geq 2)$ particles at time $0$, 
these particles evolve independently in a similar way.   
A particle system constructed in this way is called a branching Brownian motion 
with branching rate $\mu$ and branching mechanism ${\mathbf p}$. 
We use the notation $\overline{\bf M}$ to indicate this particle system. 

For $n\geq 2$, we introduce an equivalence relation on $({\mathbb R}^d)^n$ 
by ignoring the order of components, 
and denote by $({\mathbb R}^d)^{(n)}$ the associated equivalence class. 
We also let $({\mathbb R}^d)^{(1)}={\mathbb R}^d$. 
Let $Z_t$ be the total number of particles at time $t$. 
If $Z_t=n$ for some $n\geq 1$, 
then we let 
${\mathbf B}_t=({\mathbf B}_t^{1},\dots, {\mathbf B}_t^n)\in ({\mathbb R}^d)^{(n)}$ 
denote the location of particles alive at time $t$. 
For ${\mathbf x}=(x^1,\dots,x^{n})\in ({\mathbb R}^d)^{(n)}$, 
let ${\mathbf P}_{\mathbf x}$ be the law of $\overline{\bf M}$  
under the condition  that the initial $n$ particles are located at 
$x^1,\dots, x^n\in {\mathbb R}^d$.

Let $e_0=\inf\{t>0 \mid Z_t=0\}$ be the extinction time of $\overline{\mathbf M}$ 
and $u_e(x)={\mathbf P}_x(e_0<\infty)$. 
We say that $\overline{\mathbf M}$ becomes extinct if $u_e\equiv 1$.
Note that on the event $\{e_0<\infty\}$, we have $Z_t=0$ for any $t\geq e_0$.

Define for $f\in {\cal B}_b({\mathbb R}^d)$,
\begin{equation*}
Z_t(f)=
\begin{cases}
\displaystyle \sum_{k=1}^{Z_t}f({\mathbf B}_t^k) & (t<e_0),\\ 
0 & (t\geq e_0).
\end{cases}
\end{equation*}
Let 
\begin{equation}\label{eq:exp-offspring}
Q(x)=\sum_{n=1}^{\infty}np_n(x), \quad R(x)=\sum_{n=2}^{\infty}n(n-1)p_n(x).
\end{equation}
The following lemma is sometimes called the many-to-one or many-to-two lemma 
(see, e.g., \cite{HR17}). 

\begin{lem}\label{lem:moment} 
{\rm (\cite[Lemma 3.3]{S08}).}
Let $\mu\in {\cal K}$. 
\begin{enumerate}
\item If the measure 
$$
\nu_Q({\rm d}x):=Q(x)\mu({\rm d}x)
$$
also belongs to the Kato class, 
then for any $f\in {\cal B}_b({\mathbb R}^d)$,
\begin{equation}\label{eq:br-fk}
{\mathbf E}_x\left[Z_t(f)\right]
=E_x\left[e^{A_t^{(Q-1)\mu}}f(B_t)\right].
\end{equation}
\item If the measure 
$$
\nu_R({\rm d}x):=R(x)\mu({\rm d}x)
$$
also belongs to the Kato class, then for any $f, g\in {\cal B}_b({\mathbb R}^d)$,
\begin{equation}\label{eq:br-fk-1}
\begin{split}
&{\mathbf E}_x\left[Z_t(f)Z_t(g)\right]
=E_x\left[e^{A_t^{(Q-1)\mu}}f(B_t)g(B_t)\right]\\
&+E_x\left[\int_0^t e^{A_s^{(Q-1)\mu}}E_{B_s}\left[e^{A_{t-s}^{(Q-1)\mu}}f(B_{t-s})\right]
E_{B_s}\left[e^{A_{t-s}^{(Q-1)\mu}}g(B_{t-s})\right]\,{\rm d}A_s^{\nu _R}\right].
\end{split}
\end{equation}
\end{enumerate}
\end{lem}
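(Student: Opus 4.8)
The plan is to follow the classical integral-equation (Ikeda--Nagasawa--Watanabe) route: condition on the first branching time to obtain renewal-type equations for the moments, exhibit the claimed Feynman--Kac expressions as solutions of the same equations, and then close the argument with a uniqueness statement, which is where the Kato class hypotheses enter.

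The starting point is a pathwise identity for the exponential additive functional. Setting $N_t:=e^{-A_t^{\mu}}$ and $M_t:=e^{A_t^{(Q-1)\mu}}$, the chain rule for functions of bounded variation gives $d(N_t/M_t)=-(N_t/M_t)Q(B_t)\,dA_t^{\mu}$, hence $1-N_t/M_t=\int_0^t (N_s/M_s)Q(B_s)\,dA_s^{\mu}$; multiplying by $M_t$ and using $M_t/M_s=M_{t-s}\circ\theta_s$ yields
\begin{equation*}
e^{A_t^{(Q-1)\mu}}=e^{-A_t^{\mu}}+\int_0^t e^{-A_s^{\mu}}\,Q(B_s)\,\bigl(e^{A_{t-s}^{(Q-1)\mu}}\circ\theta_s\bigr)\,dA_s^{\mu}.
\end{equation*}
Multiplying by $f(B_t)$, taking $E_x[\,\cdot\,]$ and using the Markov property, I get that $v_f(t,x):=E_x[e^{A_t^{(Q-1)\mu}}f(B_t)]$ satisfies
\begin{equation*}
v_f(t,x)=E_x\bigl[e^{-A_t^{\mu}}f(B_t)\bigr]+E_x\Bigl[\int_0^t e^{-A_s^{\mu}}\,Q(B_s)\,v_f(t-s,B_s)\,dA_s^{\mu}\Bigr].
\end{equation*}
On the other hand, conditioning $\overline{\bf M}$ on its first splitting time (which, given the driving Brownian path, occurs with intensity $e^{-A_s^{\mu}}\,dA_s^{\mu}$) and invoking the branching property shows that $u_f(t,x):={\mathbf E}_x[Z_t(f)]$ solves the \emph{same} equation, the factor $Q(B_s)=\sum_n n p_n(B_s)$ being the mean number of offspring produced at the split. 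Uniqueness of the solution then gives \eqref{eq:br-fk}.

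For \eqref{eq:br-fk-1} I proceed similarly. Decomposing $Z_t(f)Z_t(g)=\sum_i Z^{(i)}(f)Z^{(i)}(g)+\sum_{i\neq j}Z^{(i)}(f)Z^{(j)}(g)$ over the i.i.d.\ subtrees born at the first split and using part~(i) for the first moments shows that $w(t,x):={\mathbf E}_x[Z_t(f)Z_t(g)]$ solves the renewal equation with the same $Q$-kernel as before but with inhomogeneous term
\begin{equation*}
E_x\bigl[e^{-A_t^{\mu}}(fg)(B_t)\bigr]+E_x\Bigl[\int_0^t e^{-A_s^{\mu}}\,R(B_s)\,v_f(t-s,B_s)\,v_g(t-s,B_s)\,dA_s^{\mu}\Bigr].
\end{equation*}
Here $R(B_s)=\sum_n n(n-1)p_n(B_s)$ counts the ordered pairs of distinct offspring. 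It then remains to check that the right-hand side $\Psi$ of \eqref{eq:br-fk-1} solves this equation. Write $\Psi=\Psi_1+\Psi_2$ with $\Psi_1(t,x)=E_x[e^{A_t^{(Q-1)\mu}}(fg)(B_t)]$ and $\Psi_2(t,x)=E_x[\int_0^t e^{A_s^{(Q-1)\mu}}v_f(t-s,B_s)v_g(t-s,B_s)\,dA_s^{\nu_R}]$. The term $\Psi_1$ is handled by the computation of part~(i) applied to $fg$, so $\Psi_1$ equals the $fg$-term plus the $Q$-kernel acting on $\Psi_1$; for $\Psi_2$ I insert the pathwise identity above into the factor $e^{A_s^{(Q-1)\mu}}$, use $dA_s^{\nu_R}=R(B_s)\,dA_s^{\mu}$, exchange the order of the two additive-functional integrals ($\int_0^t dA_r\int_0^r dA_s=\int_0^t dA_s\int_s^t dA_r$) and apply the Markov property, which reproduces exactly the $R$-term above plus the $Q$-kernel acting on $\Psi_2$. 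Adding these, $\Psi=\Psi_1+\Psi_2$ satisfies the full renewal equation, and uniqueness gives $w=\Psi$, i.e.\ \eqref{eq:br-fk-1}.

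I expect the genuine work to lie not in these algebraic manipulations but in two bookkeeping points: the a priori finiteness of ${\mathbf E}_x[Z_t(f)]$ and ${\mathbf E}_x[Z_t(f)Z_t(g)]$ (needed to legitimize the conditioning and the use of Fubini for additive-functional integrals), and the uniqueness of the solutions of the renewal equations. Both follow from the Kato class hypotheses: for $\nu_Q\in{\cal K}$ one has $\sup_x E_x[A_T^{\nu_Q}]\to 0$ as $T\downarrow 0$, so the operator ${\cal L}w(t,x):=E_x[\int_0^t e^{-A_s^{\mu}}Q(B_s)w(t-s,B_s)\,dA_s^{\mu}]$ is a strict contraction on bounded functions on $[0,T]\times{\mathbb R}^d$ for $T$ small; this gives uniqueness and an a priori bound on $[0,T]$, and one iterates over successive time intervals to reach all $t>0$. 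Finiteness of the moments is obtained by the same estimate applied first to the truncations ${\mathbf E}_x[Z_t\wedge N]$ and ${\mathbf E}_x[(Z_t\wedge N)^2]$, letting $N\to\infty$; the second-moment step additionally uses $\nu_R\in{\cal K}$ to control the inhomogeneous term. This uniqueness/finiteness part, rather than any conceptual difficulty, is the main obstacle; once the pathwise Duhamel identity for $e^{A_t^{(Q-1)\mu}}$ is in hand, the identities \eqref{eq:br-fk} and \eqref{eq:br-fk-1} are essentially forced. (A spine change-of-measure argument would be an alternative, but the integral-equation route is the most transparent when the branching rate is a general Kato class measure.)
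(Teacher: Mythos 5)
Your proposal is correct and follows essentially the same route as the proof the paper relies on: the lemma is quoted from \cite[Lemma 3.3]{S08}, whose argument is exactly this Ikeda--Nagasawa--Watanabe first-splitting-time decomposition leading to renewal equations, identification of the Feynman--Kac expressions as solutions via the Duhamel identity for $e^{A_t^{(Q-1)\mu}}$, and uniqueness/finiteness supplied by the Kato class hypotheses (which is precisely the point of the paper's remark that the proof of \cite{S08} extends beyond bounded $Q$ and $R$). No substantive deviation or gap to report.
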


We note that \eqref{eq:br-fk} and \eqref{eq:br-fk-1} were proved in \cite[Lemma 3.3]{S08} 
under the condition that $Q(x)$ and $R(x)$ are bounded on ${\mathbb R}^d$. 
However, that proof still works under the weak conditions as in Lemma \ref{lem:moment}. 
Since $Q(x)\leq R(x)+1$ and $\mu \in {\cal K}$,  
we see that $\nu_R\in {\cal K}$ implies $\nu_Q\in {\cal K}$.

\subsection{Results}
Let $\mu\in {\cal K}$ and 
let ${\mathbf p}$ be a probability function on ${\mathbb R}^d$.
Let $\overline{\mathbf M}=(\{{\mathbf B}_t\}_{t\geq 0}, \{{\mathbf P}_{\mathbf x}\}_{{\mathbf x}\in {\mathbf X}})$ 
be the branching Brownian motion on ${\mathbf X}$ with branching rate $\mu$ and branching mechanism ${\mathbf p}$. 
We make the next assumption on $\mu$ and ${\mathbf p}$.

\begin{assum}\label{assum:b}\rm 
\begin{enumerate}
\item $\mu$ is a Kato class measure with compact support in ${\mathbb R}^d$. 
\item $\nu_R({\rm d}x)\in {\cal K}$. 
\item Let $\lambda:=\lambda((Q-1)\mu)$. Then $\lambda<0$.
\end{enumerate}
\end{assum}

As we see from Subsection \ref{subsect:kato-fk}, 
Assumption \ref{assum:b} implies that  $\lambda$ is the principal eigenvalue 
of the operator ${\cal H}^{(Q-1)\mu}$ 
and the corresponding $L^2$-normalized eigenfunction has a version $h$  
which is bounded, continuous and strictly positive on ${\mathbb R}^d$.
We also know by \cite[Lemmas 4.1 and 4.2]{T03} that if  $d=1,2$, $p_0\equiv 0$  and $(Q-1)\mu$ is non-trivial,  
then Assumption \ref{assum:b} (iii) follows from (i) and (ii).

Let
\begin{equation*}
M_t=e^{\lambda t}Z_t(h).
\end{equation*}
Since $\nu_R$ is a Kato class measure with compact support, 
$M_t$ is a square integrable nonnegative ${\mathbf P}_x$-martingale 
by the same argument as in \cite[Lemma 3.4]{S08}. 
In fact, we have 
\begin{equation}\label{eq:mtg-moment-1}
{\mathbf E}_x\left[M_t\right]=h(x)
\end{equation}
and
\begin{equation}\label{eq:mtg-moment-2}
{\mathbf E}_x\left[M_t^2\right]
=e^{2\lambda t}E_x\left[e^{A_t^{(Q-1)\mu}}h(B_t)^2\right]
+E_x\left[\int_0^te^{2\lambda s+A_s^{(Q-1)\mu}}h(B_s)^2\,{\rm d}A_s^{\nu_R}\right].
\end{equation}
Note that the limit $M_{\infty}:=\lim_{t\rightarrow\infty}M_t$ exists ${\mathbf P}_x$-a.s.\ 
and ${\mathbf P}_x(M_{\infty}>0)>0$. 
Since \cite[Lemma 3.9]{S18} and \cite[Theorem 3.2 (ii)]{S19} imply that 
\begin{equation*}
\lim_{t\rightarrow\infty}\frac{1}{t}\log Z_t
=\lim_{t\rightarrow\infty}\frac{1}{t}\log{\mathbf E}_x[Z_t]=-\lambda,  
\ \text{${\mathbf P}_x(\cdot\mid M_{\infty}>0)$-a.s.},
\end{equation*}
we can regard the event $\{M_{\infty}>0\}$ as the regular growth event. 

Let $L_t$ be the maximum of the Euclidean norms of particles alive at time $t$:
\begin{equation*}
L_t=\begin{cases}
\max_{1\leq k\leq Z_t}|{\mathbf B}_t^k| & (t<e_0),\\
0 & (t\geq e_0).
\end{cases}\end{equation*} 
Then by \cite[Corollary 3.3]{S19}, 
\begin{equation}\label{eq:maximal}
\lim_{t\rightarrow\infty}\frac{L_t}{t}=\sqrt{\frac{-\lambda}{2}}, \quad \text{${\mathbf P}_x(\cdot \mid M_{\infty}>0)$-a.s.}
\end{equation}
In particular, if $d=1, 2$, then this equality is valid ${\mathbf P}_x(\cdot \mid e_0=\infty)$-a.s.

We are now in a position to state our results in this paper.  
The first result provides the second growth order  of $L_t$ as $t\rightarrow\infty$. 
For $\kappa\in {\mathbb R}$, let 
\begin{equation}\label{eq:rad-br}
R_1^{(\kappa)}(t)=\sqrt{\frac{-\lambda}{2}}t+\frac{d-1}{2\sqrt{-2\lambda}}\log t+\kappa.
\end{equation}

\begin{thm}\label{thm:lim-dist}
Under Assumption {\rm \ref{assum:b}}, 
the next equality holds for each $\kappa\in {\mathbb R}${\rm :}
\begin{equation*}
\lim_{t\rightarrow\infty}{\mathbf P}_x(L_t>R_1^{(\kappa)}(t))
={\mathbf E}_x\left[1-\exp\left(-c_*e^{-\sqrt{-2\lambda}\kappa}M_{\infty}\right)\right].
\end{equation*}
Here $c_*$ is a positive constant, 
which will be given in {\rm \eqref{eq:const-1}} below, with $\nu=(Q-1)\mu$.
\end{thm}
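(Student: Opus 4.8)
The plan is to reduce the theorem to the asymptotics of the first two population moments of the number of particles lying outside the ball of radius $R_1^{(\kappa)}(t)$, and then to run the standard Poisson-approximation/second-moment argument as in \cite{CH14,B20}. Concretely, write $N_t^{(\kappa)}=Z_t\bigl(\mathbf 1_{\{|\cdot|>R_1^{(\kappa)}(t)\}}\bigr)$ for the count of particles beyond the forefront, so that $\{L_t>R_1^{(\kappa)}(t)\}=\{N_t^{(\kappa)}\ge 1\}$. First I would establish, via the many-to-one formula \eqref{eq:br-fk} and the sharp locally uniform long-time behavior of the Feynman--Kac semigroup $p_t^{\nu}$ with $\nu=(Q-1)\mu$ (the estimates \eqref{eq:both}, proved in Section \ref{sect:fk-est} using \eqref{eq:eigen-eq}, \eqref{eq:est-eigen} and the Poincar\'e inequality \eqref{eq:poincare}), that
\begin{equation*}
{\mathbf E}_x\bigl[N_t^{(\kappa)}\bigr]=p_t^{\nu}\mathbf 1_{\{|\cdot|>R_1^{(\kappa)}(t)\}}(x)\longrightarrow c_*e^{-\sqrt{-2\lambda}\kappa}h(x)\qquad(t\to\infty)
\end{equation*}
locally uniformly in $x$, for an explicit constant $c_*$ built from the eigenfunction $h$, the resolvent asymptotics \eqref{eq:resolvent}, and a Gaussian integral over the sphere $\{|y|=R_1^{(\kappa)}(t)\}$; this is where the precise choice of the $\frac{d-1}{2\sqrt{-2\lambda}}\log t$ correction term in \eqref{eq:rad-br} is forced, and it pins down $c_*$ as in \eqref{eq:const-1}.

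Next I would control the second moment ${\mathbf E}_x\bigl[N_t^{(\kappa)}(N_t^{(\kappa)}-1)\bigr]$ using the many-to-two formula \eqref{eq:br-fk-1}: the diagonal term is $p_t^\nu\mathbf 1_{\{|\cdot|>R_1^{(\kappa)}(t)\}}(x)$, which vanishes in the limit after subtraction, and the branching term is an integral over $s\in[0,t]$ against $dA_s^{\nu_R}$ of $e^{A_s^\nu}$ times the \emph{product} of two first-moment factors $p_{t-s}^{\nu}\mathbf 1_{\{|\cdot|>R_1^{(\kappa)}(t)\}}(B_s)$. Because $\mu$, hence $\nu_R$, has compact support, the time integral is effectively over bounded $s$, and on that range the product of two ``escape from the forefront'' probabilities is of order $e^{-2\sqrt{-2\lambda}\,R_1^{(\kappa)}(t)}\asymp e^{-(-2\lambda)t-\cdots}$, i.e.\ it is quadratically smaller than a single factor. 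Carefully integrating against $e^{2\lambda s}\,dA_s^{\nu_R}$ (convergent since $\lambda<0$) and using the eigenfunction bound \eqref{eq:est-eigen} to absorb the spatial tails, I expect to obtain
\begin{equation*}
{\mathbf E}_x\bigl[N_t^{(\kappa)}(N_t^{(\kappa)}-1)\bigr]\longrightarrow \bigl(c_*e^{-\sqrt{-2\lambda}\kappa}\bigr)^2\,{\mathbf E}_x\bigl[M_\infty^2\bigr]\cdot(\text{combinatorial factor})
\end{equation*}
and, more importantly, that all higher factorial moments converge to the corresponding moments of a mixed-Poisson law with random parameter $c_*e^{-\sqrt{-2\lambda}\kappa}M_\infty$, where $M_\infty$ enters because the long-time branching structure is governed by the martingale limit $M_t\to M_\infty$. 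This is the analogue of the renewal/moment computation of \cite{CH14,B20}, with the renewal input replaced by the Feynman--Kac asymptotics; alternatively one can phrase the convergence through the Laplace functional ${\mathbf E}_x[\exp(-\theta N_t^{(\kappa)})]$ and a branching decomposition at a large fixed time $r$, conditioning on $\mathcal F_r$ and using the many-to-one estimate to show each subtree contributes an independent asymptotically-Poisson($c_*e^{-\sqrt{-2\lambda}\kappa}e^{\lambda r}h(\mathbf B_r^k)$) number of far particles, then letting $r\to\infty$ and invoking $M_r\to M_\infty$.

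Once the factorial moments (equivalently the Laplace functional) are shown to converge to those of a $\mathrm{Poisson}\bigl(c_*e^{-\sqrt{-2\lambda}\kappa}M_\infty\bigr)$ random variable mixed over $M_\infty$, the theorem follows immediately: by the method of moments for integer-valued variables (the limiting law is determined by its moments, being dominated by a Poisson mixture with integrable parameter),
\begin{equation*}
{\mathbf P}_x\bigl(L_t>R_1^{(\kappa)}(t)\bigr)={\mathbf P}_x\bigl(N_t^{(\kappa)}\ge 1\bigr)\longrightarrow 1-{\mathbf E}_x\Bigl[\exp\bigl(-c_*e^{-\sqrt{-2\lambda}\kappa}M_\infty\bigr)\Bigr].
\end{equation*}
The main obstacle, I expect, is the second step: getting the second (and higher) moment asymptotics sharp enough to conclude \emph{exact} convergence to the Poisson mixture rather than merely tightness. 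This requires (a) the locally uniform, quantitatively sharp semigroup asymptotics of Section \ref{sect:fk-est} to be available not just at a fixed base point but uniformly over the support of $\nu_R$ and over the positions $\mathbf B_s$ of particles, (b) a uniform-integrability argument to pass from $M_s$ to $M_\infty$ inside the limit, which is where the square-integrability of $M_t$ from \eqref{eq:mtg-moment-2} and Assumption \ref{assum:b}(ii) are used, and (c) care that the compact support of $\mu$ genuinely truncates the $dA_s^{\nu_R}$-integral so the product-of-first-moments heuristic is rigorous. The Chebyshev and Paley--Zygmund inequalities \eqref{eq:moment} give the matching upper and lower bounds for ${\mathbf P}_x(N_t^{(\kappa)}\ge 1)$ locally uniformly in $x$, which is exactly the input needed to feed the \cite{CH14,B20}-style argument.
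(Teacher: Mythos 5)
Your primary route---showing that \emph{all} factorial moments of $N_t^{(\kappa)}$ converge to those of a mixed Poisson law with parameter $c_*e^{-\sqrt{-2\lambda}\kappa}M_\infty$ and concluding by the method of moments---cannot be carried out under Assumption \ref{assum:b}. The hypotheses only give $\nu_R\in{\cal K}$, i.e.\ control of the \emph{second} factorial moment of the offspring law; there is no many-to-$k$ formula available for $k\ge 3$, no finiteness of higher moments of ${\mathbf p}$ or of $M_\infty$, and hence neither the convergence of the higher factorial moments nor the moment-determinacy of the limiting mixed Poisson law (which you invoke without justification) can be established. There is a second, more basic problem with the way you intend to use \eqref{eq:moment}: for \emph{fixed} $\kappa$ the Chebyshev and Paley--Zygmund bounds do not match. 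The second factorial moment tends to $(c_*e^{-\sqrt{-2\lambda}\kappa})^2{\mathbf E}_x[M_\infty^2]$, which is comparable to, not negligible against, the first moment $c_*e^{-\sqrt{-2\lambda}\kappa}h(x)$; consistently, the theorem itself says that $\lim_t{\mathbf P}_x(L_t>R_1^{(\kappa)}(t))={\mathbf E}_x[1-\exp(-c_*e^{-\sqrt{-2\lambda}\kappa}M_\infty)]$, which is \emph{strictly smaller} than the first-moment limit $c_*e^{-\sqrt{-2\lambda}\kappa}h(x)$. The moment bounds only pinch in the double limit ``first $t\to\infty$, then $\kappa\to\infty$'', which is exactly the content of Lemma \ref{lem:conv-upper-0} (and of the $\kappa$-dependence isolated in Lemma \ref{lem:uniform-kappa-2}); your closing claim that they give matching bounds locally uniformly in $x$ for the front $R_1^{(\kappa)}(t)$ as it stands is false.

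The one-sentence alternative you mention (Laplace functional plus a branching decomposition at a large fixed time, then $M_r\to M_\infty$) is in fact the paper's proof, but the points you gloss over are precisely where the work lies. After conditioning on ${\cal F}_{T_1}$ one must apply the tail asymptotics to each particle ${\mathbf B}_{T_1}^k$, with the front rewritten as $R_1^{(\kappa)}(t)=\sqrt{-\lambda/2}\,(t-T_1)+\frac{d-1}{2\sqrt{-2\lambda}}\log(t-T_1)+F_\kappa(t,T_1)$ where $F_\kappa(t,T_1)\ge\sqrt{-\lambda/2}\,T_1+\kappa$; it is only because this effective shift is large (for large $T_1$) that each per-particle probability is asymptotically $c_*e^{-\sqrt{-2\lambda}F_\kappa(t,T_1)}h({\mathbf B}_{T_1}^k)$, the product over particles becomes $\exp(-c_*e^{-\sqrt{-2\lambda}\kappa}(1+o(1))M_{T_1})$, and the second-moment error is beaten. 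Moreover one needs this asymptotics uniformly over the (random) compact set containing the positions at time $T_1$, with the limits taken in the right order; this is Proposition \ref{prop:conv-upper-0}, whose proof requires an extra argument you do not anticipate: for starting points away from $\supp[\mu]$ no branching occurs before the hitting time $\sigma$ of a compact set containing $\supp[\mu]$, and one uses the strong Markov property at $\sigma$ together with optional stopping of the $P_x$-martingale $e^{\lambda t+A_t^{(Q-1)\mu}}h(B_t)$ to transfer the estimate from a fixed compact set to all of ${\cal L}$. With those ingredients supplied, the remaining comparison between ${\mathbf P}_x(L_t>R_1^{(\kappa)}(t))$ and ${\mathbf E}_x[1-\exp(-c_*e^{-\sqrt{-2\lambda}\kappa}M_t)]$, and the passage $M_{T_1}\to M_\infty$, go through as you indicate; but as written your proposal is missing the mechanism that makes the Poisson approximation exact, and its main route is not available under the stated assumptions.
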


For $d=1$, the same result was proved by \cite{BH16} and \cite{LS88}, 
but Theorem \ref{thm:lim-dist} allows the singularity of the branching rate measure 
even for $d\geq 2$. 
For catalytic branching random walks, this result was established by 
Carmona and Hu \cite{CH14} and Bulinskaya \cite{B20}.

Here we make a comment on Theorem \ref{thm:lim-dist}.
Let $\{Y_t\}_{t\geq 0}$ be a real valued stochastic process defined by 
\begin{equation}\label{eq:asymp-l}
L_t=\sqrt{\frac{-\lambda}{2}}t+\frac{d-1}{2\sqrt{-2\lambda}}\log t + Y_t.
\end{equation}
Then 
$\{e_0<\infty\}\subset \{M_{\infty}=0\}$ and 
\begin{equation*}
{\mathbf P}_x(L_t>R_1^{(\kappa)}(t), e_0<\infty)
={\mathbf P}_x(L_t>R_1^{(\kappa)}(t), t<e_0<\infty)\leq {\mathbf P}_x(t<e_0<\infty)\rightarrow 0
\end{equation*}
as $t\rightarrow\infty$. 
We also know by \cite[Proposition A.5]{S19} that 
\begin{itemize}
\item For $d=1,2$, $\{M_{\infty}>0\}=\{e_0=\infty\}$, ${\mathbf P}_x$-a.s.; 
\item For $d\geq 3$, ${\mathbf P}_x(\{e_0=\infty\}\cap \{M_{\infty}=0\})>0$.
\end{itemize}
For $d\geq 3$,  
$L_t$ satisfies the law of the iterated logarithm 
on the event $\{e_0=\infty\}\cap \{M_{\infty}=0\}$ (\cite[Remark 3.5]{S19}) 
so that  
\begin{equation*}
{\mathbf P}_x(L_t>R_1^{(\kappa)}(t), e_0=\infty, M_{\infty}=0)\rightarrow 0 \ (t\rightarrow\infty).
\end{equation*}
Hence for any $d\geq 1$, 
\begin{equation*}
\lim_{t\rightarrow\infty}{\mathbf P}_x(L_t>R_1^{(\kappa)}(t)\mid M_{\infty}>0)
={\mathbf E}_x\left[1-\exp\left(-c_*e^{-\sqrt{-2\lambda}\kappa}M_{\infty}\right) \mid M_{\infty}>0\right],
\end{equation*}
that is, 
\begin{equation*}
\lim_{t\rightarrow\infty}{\mathbf P}_x(Y_t\leq \kappa \mid M_{\infty}>0)
={\mathbf E}_x\left[\exp\left(-c_*e^{-\sqrt{-2\lambda}\kappa}M_{\infty}\right) \mid M_{\infty}>0\right].
\end{equation*} 
This means that $\{Y_t\}_{t\geq 0}$ 
under ${\mathbf P}_x(\cdot\mid M_{\infty}>0)$ converges in distribution 
to a Gumbel random variable, 
which is independent of $M_{\infty}$, with random shift $(-2\lambda)^{-1/2}\log M_{\infty}$.

In \cite[Theorem 3.7 (ii)]{S19}, we obtained bounds for the polynomial order 
of the tail probability of $L_t$. 
As its refinement, the next result determines the precise asymptotic behavior of the tail probability.
Let $a(t)$ be  a function on $(0,\infty)$ such that 
$a(t)=o(t)$ as $t\rightarrow\infty$ and define for $\delta\in (\sqrt{-\lambda/2},\sqrt{-2\lambda})$, 
\begin{equation}\label{eq:rad-br-1}
R_2(t)=\delta t+a(t).
\end{equation}
Let $b(t)$  be a function on $(0,\infty)$ such that 
$b(t)=o(\log t)$ as $t\rightarrow\infty$.
For $\gamma\geq d-1$, we define  
\begin{equation}\label{eq:rad-br-2}
R_3(t)=\sqrt{\frac{-\lambda}{2}} t+\frac{\gamma}{2\sqrt{-2\lambda}}\log t+b(t).
\end{equation}
If $\gamma=d-1$, then we assume in addition that $b(t)\rightarrow\infty$ as $t\rightarrow\infty$. 
For $R>0$, let $Z_t^R$ be the total number of particles on $\{x\in {\mathbb R}^d \mid |x|>R\}$ at time $t$. 
\begin{thm}\label{thm:equiv}
Let $K$ be a compact set in ${\mathbb R}^d$ and 
let  Assumption {\rm \ref{assum:b}} hold.  
Then for any $\delta\in (\sqrt{-\lambda/2},\sqrt{-2\lambda})$, 
\begin{equation*}
\lim_{t\rightarrow\infty}\inf_{x\in K}
\frac{{\mathbf P}_x(L_t> R_2(t))}
{{\mathbf E}_x\left[Z_t^{R_2(t)}\right]}
=\lim_{t\rightarrow\infty}\sup_{x\in K}
\frac{{\mathbf P}_x(L_t>R_2(t))}
{{\mathbf E}_x\left[Z_t^{R_2(t)}\right]}=1.
\end{equation*}
These equalities are still true if $R_2(t)$ is replaced by $R_3(t)$ 
with any $\gamma\geq d-1$.
\end{thm}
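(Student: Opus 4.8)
\emph{Sketch of the intended proof.}
Write $\nu=(Q-1)\mu$, let $K_0\subset{\mathbb R}^d$ be the (compact) support of $\mu$, let $R(t)$ denote $R_2(t)$ or $R_3(t)$, and set $f_t:={\mathbf 1}_{\{y\in{\mathbb R}^d:\,|y|>R(t)\}}$, so that $f_t^2=f_t$ and $Z_t^{R(t)}=Z_t(f_t)$. From Lemma~\ref{lem:moment} with $f=g=f_t$, the Revuz correspondence for $A^{\nu_R}$, and $f_t^2=f_t$, one has ${\mathbf E}_x[Z_t(f_t)]=p_t^\nu f_t(x)$ and
\begin{equation*}
{\mathbf E}_x\big[Z_t(f_t)(Z_t(f_t)-1)\big]=I_t(x):=\int_0^t\int_{K_0}p_s^\nu(x,z)\,\big(p_{t-s}^\nu f_t(z)\big)^2\,\nu_R({\rm d}z)\,{\rm d}s .
\end{equation*}
Since $Z_t(f_t)$ is nonnegative integer valued, $\{L_t>R(t)\}=\{Z_t(f_t)\ge1\}$, so Markov's inequality and the elementary bound $\sum_{k\ge1}ka_k-\sum_{k\ge2}k(k-1)a_k\le\sum_{k\ge1}a_k$ (valid for $a_k\ge0$, here $a_k={\mathbf P}_x(Z_t(f_t)=k)$) give
\begin{equation*}
p_t^\nu f_t(x)-I_t(x)\ \le\ {\mathbf P}_x\big(L_t>R(t)\big)\ \le\ p_t^\nu f_t(x)={\mathbf E}_x\big[Z_t^{R(t)}\big].
\end{equation*}
Hence the theorem follows once I show
\begin{equation}\label{eq:plan-goal}
\lim_{t\to\infty}\ \sup_{x\in K}\ \frac{I_t(x)}{p_t^\nu f_t(x)}=0 .
\end{equation}

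The plan is to prove the stronger bound $I_t(x)\le C\,p_t^\nu f_t(x)^2$ uniformly for $x\in K$; since the sharp, locally uniform Feynman--Kac asymptotics of Section~\ref{sect:fk-est} (equation~\eqref{eq:both} for $R_2(t)$, equation~\eqref{eq:both-1} for $R_3(t)$) give $\sup_{x\in K}p_t^\nu f_t(x)\to0$ --- this is where $\delta>\sqrt{-\lambda/2}$ enters for $R_2(t)$, and where the hypothesis $b(t)\to\infty$ is needed for $R_3(t)$ in the borderline case $\gamma=d-1$ --- the estimate \eqref{eq:plan-goal} then follows. The feature that makes this feasible is that $\nu_R=R\,\mu$ is compactly supported, so in $I_t(x)$ the kernel $p_s^\nu(x,z)$ and the quantity $p_{t-s}^\nu f_t(z)$ are only ever evaluated at $z\in K_0$. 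On the fixed compact set $K_0$ I would use: (a) a parabolic Harnack / two-sided heat-kernel comparison giving $p_u^\nu g(z)\asymp p_u^\nu g(z')$ for $z,z'\in K_0$, $u\ge1$, $g\ge0$, and $p_t^\nu f_t(z)\asymp p_t^\nu f_t(x)$ for $z\in K_0$, $x\in K$; (b) the lower bound $p_s^\nu{\mathbf 1}_{K_0}(z)\ge c\,e^{-\lambda s}$ for $s\ge1$, $z\in K_0$, which follows from $\lambda=\lambda(\nu)<0$ being the principal eigenvalue together with the spectral gap \eqref{eq:poincare}; and (c) $\int_{K_0}p_s^\nu(x,z)\,\nu_R({\rm d}z)\le C e^{-\lambda s}$ for $s\ge1$, $x\in K$, while $\int_0^1\int_{K_0}p_s^\nu(x,z)\,\nu_R({\rm d}z)\,{\rm d}s<\infty$ uniformly in $x\in K$ by \eqref{eq:heat-comp} and the Kato property of $\nu_R$ (throughout, $h$-factors are absorbed into constants since $\inf_K h>0$ and $h$ is bounded on $K_0$).

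Granting (a)--(c), the decisive estimate on the integrand is that for $z\in K_0$, $x\in K$ and $1\le s\le t-1$,
\begin{equation*}
p_t^\nu f_t(z)=E_z\big[e^{A_s^\nu}\,p_{t-s}^\nu f_t(B_s)\big]\ \ge\ c\,p_{t-s}^\nu f_t(z)\,p_s^\nu{\mathbf 1}_{K_0}(z)\ \ge\ c'\,e^{-\lambda s}\,p_{t-s}^\nu f_t(z),
\end{equation*}
where the first inequality restricts the expectation to $\{B_s\in K_0\}$ and uses the Harnack comparison in (a) (at the macroscopic time $t-s\ge1$), and the second uses (b); combined with $p_t^\nu f_t(z)\asymp p_t^\nu f_t(x)$ this yields $p_{t-s}^\nu f_t(z)\le C\,e^{\lambda s}\,p_t^\nu f_t(x)$ for $1\le s\le t-1$. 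The two short boundary ranges $s\in[0,1)$ and $s\in(t-1,t]$ are handled directly via \eqref{eq:heat-comp}: in the first, $p_{t-s}^\nu f_t(z)\le C\,p_t^\nu f_t(x)$ (bounded ratio, by robustness of the Section~\ref{sect:fk-est} asymptotics under an $O(1)$ time-shift plus spatial Harnack), and in the second $p_{t-s}^\nu f_t(z)$ is super-exponentially small because $R(t)$ grows linearly. Substituting into the formula for $I_t(x)$, splitting $\int_0^t$ accordingly, and using (c) together with $\int_1^\infty e^{-\lambda s}\,(e^{\lambda s})^2\,{\rm d}s=\int_1^\infty e^{\lambda s}\,{\rm d}s<\infty$ (recall $\lambda<0$), I obtain $I_t(x)\le C\,p_t^\nu f_t(x)^2$ uniformly on $K$, hence \eqref{eq:plan-goal}. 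The case $R_3(t)$ is treated in the same way, the only change being that $p_t^\nu f_t(x)\to0$ now comes from the polynomial factor $t^{(d-1-\gamma)/2}$ and the factor $e^{-\sqrt{-2\lambda}\,b(t)}$ rather than from exponential decay.

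I expect the real work to lie entirely in the input from Section~\ref{sect:fk-est}: proving the precise, \emph{locally uniform} long-time behavior of $p_t^\nu{\mathbf 1}_{\{|\cdot|>R(t)\}}$ in the subcritical window $\delta\in(\sqrt{-\lambda/2},\sqrt{-2\lambda})$ and in the critical regime governed by $R_3(t)$, and the accompanying uniform Harnack-type comparisons on $K_0$. Once those are in hand, the argument above for \eqref{eq:plan-goal} is short, the two structural facts doing the work being that $\mu$ has compact support (so the branching correction $I_t(x)$ localizes to $K_0$) and that $\lambda<0$ (so the time integral defining $I_t(x)$ converges geometrically), whence $I_t(x)\lesssim p_t^\nu f_t(x)^2=o\big(p_t^\nu f_t(x)\big)$ uniformly on $K$.
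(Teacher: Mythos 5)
Your overall architecture is the same as the paper's: a second-moment method in which the upper bound is Chebyshev, the lower bound comes from showing the (factorial) second moment is at most a constant times the square of the first moment, and the conclusion follows because $\sup_{x\in K}{\mathbf E}_x[Z_t^{R(t)}]\asymp\eta(t)\to0$ — and you correctly identify that this last point is where $\delta>\sqrt{-\lambda/2}$, resp.\ $\gamma\ge d-1$ with $b(t)\to\infty$, enter (the paper uses Paley--Zygmund instead of your Bonferroni-type bound ${\mathbf P}(Z\ge1)\ge{\mathbf E}[Z]-{\mathbf E}[Z(Z-1)]$, a cosmetic difference). The genuine gap is in your derivation of the decisive inner estimate $p_{t-s}^{\nu}f_t(z)\le Ce^{\lambda s}p_t^{\nu}f_t(x)$ for $z\in K_0=\supp[\mu]$ and \emph{all} $s\in[1,t-1]$. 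Your ingredient (a) — a comparison $p_u^{\nu}g(z)\asymp p_u^{\nu}g(z')$ valid for all $z,z'\in K_0$, all $u\ge1$ and all $g\ge0$ with one constant — is false: with $g={\bf 1}_{\{|y|>R(t)\}}$, $R(t)\sim\delta t$, and $u=t-s$ of order $1$ (or more generally $u=o(t)$), $p_u^{\nu}g(z)$ is governed by the Gaussian tail $\exp\left(-(R(t)-|z|)^2/(2cu)\right)$, so the ratio between two points of $K_0$ is of order $\exp\left(c'R(t)/u\right)$, which is unbounded as $t\to\infty$. This is exactly the regime your chain must cover, since the ${\rm d}s$-integral in $I_t(x)$ runs up to $t$ and you only excise $s\in(t-1,t]$. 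Moreover, even in the range where such comparability does hold (i.e.\ $t-s$ of order $t$), the only proved input you cite, \eqref{eq:both}, covers just $s\in[0,\alpha t]$ with $\alpha<1-\delta/\sqrt{-2\lambda}<1/2$, so the middle range $s\in(\alpha t,\,t-1]$ is not reached by your tools either. Thus the target inequality is true, but your proposed route does not establish it on a substantial part of the integration range.

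The paper closes precisely this range by a different mechanism, with no Harnack-type comparison: the Gaussian/AM--GM bound \eqref{eq:int-upper-s}, valid for all $s\in[0,t-1]$ and $z\in\overline{B_0(M)}$, combined with Proposition \ref{prop:est-int} applied with $c=-\lambda$ (and the function $J$), yields $E_z\left[e^{A_{t-s}^{\nu}};|B_{t-s}|>R(t)\right]\le Ce^{\lambda s}\eta(t)$ uniformly in $s\in[0,t-1]$; together with the gaugeability \eqref{eq:gaugeable} — for which your step (c) plus $\int_1^{\infty}e^{\lambda s}\,{\rm d}s<\infty$ is an adequate substitute — this is exactly Lemma \ref{lem:second-moment}, and the last unit of time is handled as in your $s\in(t-1,t]$ case. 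If you replace your Harnack step by an appeal to Lemma \ref{lem:second-moment} (with $s=0$ and the measure $\nu_R$), and keep \eqref{eq:both-1} with \eqref{eq:asymp-eta} for the first moment (the paper's \eqref{eq:second-upper}--\eqref{eq:second-first}), your argument closes and is essentially the paper's proof; as written, the justification of the key estimate in the late-time range is a genuine gap.
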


On account of \eqref{eq:br-fk} with \eqref{eq:asymp-eta} and \eqref{eq:both-1} below, 
Theorem \ref{thm:equiv} asserts that local uniformly in $x\in {\mathbb R}^d$, 
for any $\delta\in (\sqrt{-\lambda/2},\sqrt{-2\lambda})$,
\begin{equation}\label{eq:tail-asymp}
{\mathbf P}_x(L_t>R_2(t))\sim {\mathbf E}_x\left[Z_t^{R_2(t)}\right]
\sim c_d \delta^{(d-1)/2}h(x)e^{(-\lambda-\sqrt{-2\lambda} \delta) t}e^{-\sqrt{-2\lambda}a(t)}t^{(d-1)/2} \ (t\rightarrow\infty)
\end{equation}
and for any $\gamma\geq d-1$,  
\begin{equation}\label{eq:tail-asymp-1}
{\mathbf P}_x(L_t>R_3(t))\sim {\mathbf E}_x\left[Z_t^{R_3(t)}\right]
\sim c_*
h(x)e^{-\sqrt{-2\lambda}b(t)} t^{(d-1-\gamma)/2} \ (t\rightarrow\infty). 
\end{equation}
Here $c_d$ is a positive constant with $\nu=(Q-1)\mu$, 
which will be given in \eqref{eq:const} below, 
and $c_*$ is the same constant as in Theorem \ref{thm:lim-dist}.

Note that if  $\delta\geq \sqrt{-2\lambda}$ and $p_0\equiv 0$, 
then by \cite[Remark 3.8 and (4.1)]{S19}, 
\begin{equation}\label{eq:no-effect}
{\mathbf P}_x(L_t>\delta t)\asymp {\mathbf E}_x[Z_t^{\delta t}]
\asymp P_x(|B_t|>\delta t) \asymp e^{-\delta^2t/2}t^{(d-2)/2} \ (t\rightarrow\infty).
\end{equation}
However, we do not know whether it is possible to refine this relation as in Theorem \ref{thm:equiv}.

As a corollary of Theorem \ref{thm:equiv}, 
we determine the conditional limiting distribution of the population 
outside the forefront. 
\begin{cor}\label{thm:yaglom}
Let $K$ be a compact set in ${\mathbb R}^d$ and let Assumption {\rm \ref{assum:b}} hold.  
Then for any $\delta\in (\sqrt{-\lambda/2},\sqrt{-2\lambda})$, 
\begin{equation*}
\begin{split}
\lim_{t\rightarrow\infty}\inf_{x\in K}{\mathbf P}_x\left(Z_t^{R_2(t)}=k \mid L_t>R_2(t)\right)
&=\lim_{t\rightarrow\infty}\sup_{x\in K}{\mathbf P}_x\left(Z_t^{R_2(t)}=k \mid L_t>R_2(t)\right)\\
&=\begin{cases} 
1 & (k=1), \\ 
0 & (k\geq 2).
\end{cases}
\end{split}
\end{equation*}
These equalities are still true if $R_2(t)$ is replaced by $R_3(t)$ 
with any $\gamma\geq d-1$.
\end{cor}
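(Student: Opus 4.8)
The plan is to reduce the statement entirely to the first-moment comparison already established in Theorem~\ref{thm:equiv}. Write $N_t:=Z_t^{R_2(t)}$ for brevity. Since $R_2(t)=\delta t+a(t)$ with $\delta>0$, for all sufficiently large $t$ we have $\{L_t>R_2(t)\}=\{N_t\geq 1\}$: a particle of norm exceeding $R_2(t)$ exists precisely when $N_t\geq 1$, and this event is automatically contained in $\{t<e_0\}$ (on $\{t\geq e_0\}$ both $L_t$ and $N_t$ vanish while $R_2(t)>0$). Hence for large $t$,
\[
{\mathbf P}_x\!\left(N_t=k\mid L_t>R_2(t)\right)=\frac{{\mathbf P}_x(N_t=k)}{{\mathbf P}_x(N_t\geq 1)},
\]
and it suffices to treat the cases $k=1$ and $k\geq 2$, with uniformity over $x\in K$.

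First I would record the elementary identities, valid for any nonnegative integer-valued random variable,
\[
{\mathbf E}_x[N_t]=\sum_{j\geq 1}{\mathbf P}_x(N_t\geq j)\geq {\mathbf P}_x(N_t\geq 1)+{\mathbf P}_x(N_t\geq 2),\qquad {\mathbf P}_x(N_t=1)={\mathbf P}_x(N_t\geq 1)-{\mathbf P}_x(N_t\geq 2).
\]
Combining them,
\[
{\mathbf P}_x\!\left(N_t=1\mid L_t>R_2(t)\right)=1-\frac{{\mathbf P}_x(N_t\geq 2)}{{\mathbf P}_x(N_t\geq 1)}\geq 2-\frac{{\mathbf E}_x\!\left[Z_t^{R_2(t)}\right]}{{\mathbf P}_x(L_t>R_2(t))}.
\]
By Theorem~\ref{thm:equiv}, $\lim_{t\to\infty}\sup_{x\in K}{\mathbf E}_x[Z_t^{R_2(t)}]/{\mathbf P}_x(L_t>R_2(t))=1$; since the conditional probability on the left is at most $1$, this forces
\[
\lim_{t\to\infty}\sup_{x\in K}\left|{\mathbf P}_x\!\left(Z_t^{R_2(t)}=1\mid L_t>R_2(t)\right)-1\right|=0,
\]
which gives the $k=1$ assertion (both the $\inf$ and the $\sup$). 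For $k\geq 2$ one uses monotonicity: ${\mathbf P}_x(Z_t^{R_2(t)}=k\mid L_t>R_2(t))\leq {\mathbf P}_x(Z_t^{R_2(t)}\geq 2\mid L_t>R_2(t))=1-{\mathbf P}_x(Z_t^{R_2(t)}=1\mid L_t>R_2(t))$, which tends to $0$ uniformly on $K$ by the previous display. The argument with $R_3(t)$ in place of $R_2(t)$ is verbatim the same, invoking the corresponding part of Theorem~\ref{thm:equiv}; note that this is where the extra hypothesis $b(t)\to\infty$ when $\gamma=d-1$ is already absorbed.

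There is no serious obstacle once Theorem~\ref{thm:equiv} is available: the only points needing a line of care are the identification $\{L_t>R_2(t)\}=\{Z_t^{R_2(t)}\geq 1\}$ for large $t$ and the propagation of the uniformity over $x\in K$ through the elementary inequalities, both immediate. (One could alternatively bound ${\mathbf P}_x(N_t\geq 2)$ by the second factorial moment via Lemma~\ref{lem:moment}, but that is unnecessary here and would merely re-prove part of Theorem~\ref{thm:equiv}.) In effect the content of the corollary is already encoded in the comparison ${\mathbf P}_x(L_t>R_2(t))\sim{\mathbf E}_x[Z_t^{R_2(t)}]$: once the probability of having at least one particle beyond the forefront agrees asymptotically with the expected number of such particles, the conditional law of that number must concentrate on the single value $1$.
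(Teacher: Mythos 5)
Your proposal is correct and is essentially the paper's own argument: both reduce the corollary to Theorem \ref{thm:equiv} via the identity $\{L_t>R(t)\}=\{Z_t^{R(t)}\geq 1\}$ and the elementary bound ${\mathbf P}_x(Z_t^{R(t)}\geq 1)+{\mathbf P}_x(Z_t^{R(t)}\geq 2)\leq {\mathbf E}_x[Z_t^{R(t)}]$, giving ${\mathbf P}_x(Z_t^{R(t)}\geq 2\mid L_t>R(t))\leq {\mathbf E}_x[Z_t^{R(t)}]/{\mathbf P}_x(L_t>R(t))-1\to 0$ uniformly on $K$. The only cosmetic difference is that you phrase the conclusion through the $k=1$ case first and then deduce $k\geq 2$ by monotonicity, whereas the paper bounds ${\mathbf P}_x(Z_t^{R(t)}>1\mid L_t>R(t))$ directly.
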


We note that particles can not reproduce outside the support of the branching rate measure $\mu$,  
which is compact by assumption. 
Because of this property, the conditional limiting distribution in 
Corollary \ref{thm:yaglom} 
is different from the so-called Yaglom type limit 
for spatially homogeneous branching Brownian motions 
as in \cite{CR88,CR90}.

\begin{rem}\label{rem:dirac}\rm 
Suppose that $d=1$. 
Let $R_t:=\max_{1\leq k\leq Z_t}{\mathbf B}_t^{k}$ be 
the position of the rightmost particle at time $t$. 
Then Theorem \ref{thm:lim-dist} remains true by replacing $L_t$ and $c_*$ 
with $R_t$ and $c_0$ in \eqref{eq:const-1-0} below, respectively. 
Theorem \ref{thm:equiv} and Corollary \ref{thm:yaglom} 
also remain true 
by replacing $L_t$ and  $Z_t^{R_i(t)}$, respectively, 
with $R_t$  and the number of particles on the interval $(R_i(t),\infty)$ at time $t$.
We also need to replace the constant $c_d$ 
in \eqref{eq:tail-asymp} and \eqref{eq:tail-asymp-1} with $c_0$. 
The proofs of these statements are almost identical with the original ones, 
but we use \eqref{eq:int-h} instead of \eqref{eq:int-h-0}. 
\end{rem}

\subsection{Examples}
In this subsection, we apply our results to some concrete models.

\begin{exam}\label{exam:dirac}\rm 
Suppose that $d=1$ and $p_0(x)+p_2(x)\equiv 1$. 
Then $Q(x)-1=2p_2(x)-1$. 
Let $\nu=(Q-1)\mu$ and $\lambda=\lambda(\nu)$. 
Then under Assumption \ref{assum:b}, 
Theorem \ref{thm:lim-dist} is true with 
\begin{equation*}
R_1^{(\kappa)}(t)=\sqrt{\frac{-\lambda}{2}}t+\kappa.
\end{equation*}

Let 
\begin{equation*}
C_0=\frac{1}{-2\lambda}
\int_{{\mathbb R}}\left(e^{\sqrt{-2\lambda}z}+e^{-\sqrt{-2\lambda}z}\right)h(z)\,\nu({\rm d}z).
\end{equation*}
Then by \eqref{eq:tail-asymp} and \eqref{eq:const} below, 
we get for any $\delta\in (\sqrt{-\lambda/2}, \sqrt{-2\lambda})$,
\begin{equation}\label{eq:tail-asymp-exam}
{\mathbf P}_x(L_t>\delta t)\sim 
C_0h(x)e^{(-\lambda-\sqrt{-2\lambda} \delta) t}
 \ (t\rightarrow\infty).
\end{equation}
By \eqref{eq:tail-asymp-1} and \eqref{eq:const} below, we have for any $\gamma>0$,
\begin{equation}\label{eq:tail-asymp-exam-1}
{\mathbf P}_x\left(L_t>\sqrt{\frac{-\lambda}{2}}t+\frac{\gamma}{2\sqrt{-2\lambda}}\log t\right)\\
\sim C_0h(x)t^{-\gamma/2}
 \ (t\rightarrow\infty)
\end{equation}
and 
\begin{equation}\label{eq:tail-asymp-exam-2}
{\mathbf P}_x\left(L_t>\sqrt{\frac{-\lambda}{2}}t+b(t)\right)\\
\sim C_0h(x)e^{-\sqrt{-2\lambda}b(t)}
 \ (t\rightarrow\infty),
\end{equation}
where $b(t)$ is a function on $(0,\infty)$ such that $b(t)=o(\log t)$ and $b(t)\rightarrow\infty$ as $t\rightarrow\infty$. 
By Remark \ref{rem:dirac}, 
we also obtain the exact decay order of the tail distribution of the rightmost particle similar to  
\eqref{eq:tail-asymp-exam}, \eqref{eq:tail-asymp-exam-1} and \eqref{eq:tail-asymp-exam-2}. 

\begin{enumerate}
\item Let $\delta_0$ be the Dirac measure at the origin and $\mu=\beta\delta_0$ for some $\beta>0$. 
Let $p=p_2(0)$ satisfy $1/2<p\leq 1$. 
Then $\lambda=-\{(2p-1)\beta\}^2/2$ (see, e.g., \cite[Example 4.4]{S08})
and thus 
\begin{equation*}
R_1^{(\kappa)}(t)=\frac{(2p-1)\beta}{2}t+\kappa.
\end{equation*}
When $p=1$, Theorem \ref{thm:lim-dist} was already proved in \cite{BH16}. 

By Lemma \ref{lem:int-eigen} (iii), 
\begin{equation*}
h(x)=(2p-1)\beta G_{\{(2p-1)\beta\}^2/2}(x,0)h(0).
\end{equation*}
Since $\|h\|_{L^2({\mathbb R})}=1$, we have $h(0)^2=(2p-1)\beta$ 
so that \eqref{eq:tail-asymp-exam-2} becomes 
\begin{equation*}
{\mathbf P}_x\left(L_t>\sqrt{\frac{-\lambda}{2}}t+b(t) \right)
\sim 2(2p-1)\beta G_{\{(2p-1)\beta\}^2/2}(x,0) e^{-(2p-1)\beta b(t)}
 \ (t\rightarrow\infty).
\end{equation*}
We can also rewrite \eqref{eq:tail-asymp-exam} and 
\eqref{eq:tail-asymp-exam-1} in a similar way.

\item For $a>0$, let $\mu=\delta_a+\delta_0$. 
Let $p=p_2(a)$ and $q=p_2(0)$ 
so that $(Q-1)\mu=(2p-1)\delta_0+(2q-1)\delta_a$. 
Assume that $p\geq q$ for simplicity. 
Then $\lambda<0$ if and only if one of the following conditions hold:
\begin{itemize}
\item $p>1/2$ and $q\geq 1/2$;
\item $p>1/2$, $q<1/2$ and 
\begin{equation*}
2p-1>\frac{1-2q}{1+2a(1-2q)}
\end{equation*}
\end{itemize}
(see \cite[Example 3.10]{S19} and references therein). 
Theorems \ref{thm:lim-dist}, \ref{thm:equiv} and Corollary \ref{thm:yaglom} 
are valid under 
either of these conditions. 
\end{enumerate} 
\end{exam}

\begin{exam}\rm 
Suppose that $d\geq 2$.  
For $R>0$, let $\delta_R$ be a surface measure on the sphere $\{x\in {\mathbb R^d} \mid |x|=R\}$ 
and $\mu=\beta\delta_R$ for $\beta>0$. 
Assume that the branching mechanism $\{p_n(x)\}_{n=0}^{\infty}$ is spherically symmetric 
and satisfies $p_0(x)+p_2(x)\equiv 1$. 
We use the notation $p_n(x)=p_n(|x|)$ and let $p=p_2(R)$.  
For $d=2$, Theorems \ref{thm:lim-dist}, \ref{thm:equiv} and \ref{thm:yaglom} are valid if $1/2<p\leq 1$. 
For $d\geq 3$, we know that $\lambda<0$ if and only if
\begin{equation*}
(2p-1)\beta R>\frac{d-2}{2}
\end{equation*}
(see, e.g., \cite[Example 2.14]{S18} and references therein). 
Theorems \ref{thm:lim-dist}, \ref{thm:equiv} and Corollary \ref{thm:yaglom} 
are valid under this condition. 
\end{exam}

\begin{exam}\rm 
Assume that the function $R(x)$ in \eqref{eq:exp-offspring} satisfies $R\not\equiv 0$. 
Let $V(x)$ be a nonnegative function on ${\mathbb R}^d$ such that 
$V\not\equiv 0$ and 
\begin{equation*}
(1\vee R(x))V(x)\leq {\bf 1}_{\{0<|x|\leq r_0\}}|x|^l
\end{equation*}
for some $r_0>0$ and $l\in {\mathbb R}$. 
Let $\mu({\rm d}x)=\beta V(x)\,{\rm d}x$ for $\beta>0$. 
If $d=1$ and $l>-1$, or if $d\geq 2$ and $l>-2$, 
then $\mu$ and $\nu_R$ are Kato class measures with compact support in ${\mathbb R}^d$ 
(see, e.g., \cite[Examples 2.2 and  2.15 (ii)]{S18}). 
Moreover,
there exists $\beta_*>0$ such that $\lambda<0$ if and only if $\beta>\beta_*$ 
(see, e.g., \cite[Example 2.15 (ii)]{S18} and references therein). 
Theorems \ref{thm:lim-dist}, \ref{thm:equiv} and Corollary \ref{thm:yaglom} 
are valid under this condition. 
\end{exam}

\section{Estimates of Feynman-Kac semigroups}\label{sect:fk-est}
Throughout this section, $\nu^+$ and $\nu^{-}$ are Kato class measures on ${\mathbb R}^d$ 
and $\nu=\nu^{+}-\nu^{-}$. 
When $\nu^+$ and $\nu^{-}$ belong to ${\cal K}_{\infty}(1)$, we let $\lambda=\lambda(\nu)$. 
If $\lambda<0$, then we also let $\lambda_2=\lambda_2(\nu)$. 

\subsection{Preliminary lemma}
In this subsection, we prove a lemma on the density function and principal eigenfunction 
associated with the Feynman-Kac semigroup. 
Denote by $S^{d-1}$ and ${\rm d}\theta$, respectively, 
the surface of a unit ball in ${\mathbb R}^d$ and the surface measure on $S^{d-1}$. 
In particular, if $d=1$, then ${\rm d}\theta$ is the Dirac measure on $S^0=\{-1,1\}$.  
Let $\langle \cdot , \cdot \rangle$ be the standard inner product in ${\mathbb R}^d$.

\begin{lem}\label{lem:int-eigen} 
Let $\nu$ be a signed measure on ${\mathbb R}^d$ 
such that $\nu=\nu^{+}-\nu^{-}$ 
for some $\nu^{+}, \nu^{-}\in {\cal K}$.
\begin{itemize}
\item[{\rm (i)}] For any $x,y\in {\mathbb R}^d$ and $t>0$,
\begin{equation*}
\begin{split}
p_t^{\nu}(x,y)-p_t(x,y)
&=\int_0^t \left(\int_{{\mathbb R}^d}p_s^{\nu}(x,z)p_{t-s}(z,y)\,\nu({\rm d}z)\right)\,{\rm d}s\\
&=\int_0^t \left(\int_{{\mathbb R}^d}p_s(x,z)p_{t-s}^{\nu}(z,y)\,\nu({\rm d}z)\right)\,{\rm d}s.
\end{split}
\end{equation*} 
\item[{\rm (ii)}] If $\nu^{+}, \nu^{-}\in {\cal K}_{\infty}(1)$ and $\lambda<0$, then  
there exists $C>0$ such that for any $x,y\in {\mathbb R}^d$ and $t\geq 1$,  
$$|p_t^{\nu}(x,y)-e^{-\lambda t}h(x)h(y)|\leq Ce^{-\lambda_2 t}.$$

\item[{\rm (iii)}] If $\nu^+, \nu^{-}\in {\cal K}_{\infty}(1)$ and $\lambda<0$, 
then for any $x\in {\mathbb R}^d$,
$$
h(x)=\int_{{\mathbb R}^d}G_{-\lambda}(x,y)h(y)\,\nu({\rm d}y).
$$

\item[{\rm (iv)}] 
If $\nu^{+}$ and $\nu^{-}$ are compactly supported in ${\mathbb R}^d$ 
and $\lambda<0$, then the constant 
\begin{equation}\label{eq:const}
c_d:=\frac{(\sqrt{-2\lambda})^{(d-5)/2}}{(2\pi)^{(d-1)/2}}
\int_{{\mathbb R}^d}\left(\int_{S^{d-1}}e^{\sqrt{-2\lambda}\langle \theta,z\rangle}\,{\rm d}\theta\right)
h(z)\,\nu({\rm d}z)
\end{equation}
is positive and 
\begin{equation}\label{eq:int-h-0}
\int_{|y|>R}h(y)\,{\rm d}y
\sim c_d e^{-\sqrt{-2\lambda}R}R^{(d-1)/2} \ (R\rightarrow\infty).
\end{equation}
\end{itemize}
\end{lem}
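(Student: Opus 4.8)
\textbf{Proof plan for Lemma \ref{lem:int-eigen}.}

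The four parts build on one another, so I would treat them in the given order. Part (i) is the Duhamel (perturbation) identity for the Feynman-Kac kernel; the plan is to expand $e^{A_t^\nu}$ via the additive-functional version of Dynkin's formula, i.e. to write $e^{A_t^\nu}=1+\int_0^t e^{A_s^\nu}\,\mathrm{d}A_s^\nu$ (or the analogous identity conditioning on the first ``time change increment''), apply $E_x[\,\cdot\,f(B_t)]$, and use the Markov property at time $s$ together with the Revuz correspondence to convert the $\mathrm{d}A_s^\nu$ integral into $\int_{\mathbb R^d}\cdots\nu(\mathrm dz)$. The two displayed forms correspond to conditioning at the first versus the last increment, i.e. to writing the perturbation series with $p^\nu$ on the left factor or on the right factor; both follow from the same computation after one iteration, and the convergence of the series is controlled by the Kato condition via \eqref{eq:heat-comp}. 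For part (ii), I would use the spectral representation: since $\lambda<0$ is an isolated simple eigenvalue with gap to the rest of the spectrum governed by $\lambda_2$, Mercer-type expansion gives $p_t^\nu(x,y)=e^{-\lambda t}h(x)h(y)+r_t(x,y)$ where the remainder kernel $r_t$ is, for $t\geq 1$, the kernel of $p_t^\nu$ restricted to the orthocomplement of $h$; writing $r_t=p_{1/2}^\nu\,(p_{t-1}^\nu|_{h^\perp})\,p_{1/2}^\nu$ and applying the Poincar\'e inequality \eqref{eq:poincare} to the middle factor, together with the $L^2$-boundedness of $p_{1/2}^\nu$ into $L^\infty$ coming from \eqref{eq:heat-comp}, yields the bound $Ce^{-\lambda_2 t}$ uniformly in $x,y$.

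For part (iii), the plan is to start from the eigenvalue identity \eqref{eq:eigen-eq}, $p_t^\nu h=e^{-\lambda t}h$, multiply by $e^{\lambda t}$ and integrate in $t$ over $(0,\infty)$: the left side becomes $\int_0^\infty e^{\lambda t}p_t^\nu h\,\mathrm dt$, and using part (i) (the resolvent form of the Duhamel identity, or equivalently $G_{-\lambda}^\nu - G_{-\lambda} = G_{-\lambda}(\nu\cdot)G_{-\lambda}^\nu$ applied to $h$) together with the fact that $\int_0^\infty e^{\lambda t}h\,\mathrm dt$ diverges — which is exactly why only the ``difference'' survives — one extracts $h(x)=\int G_{-\lambda}(x,y)h(y)\,\nu(\mathrm dy)$. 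A cleaner route: integrate the second identity in (i) against $h$ after multiplying by $e^{\lambda t}$; the $p_t(x,y)$ term integrates to $G_{-\lambda}h(x)$ which is finite since $-\lambda>0$, the $p_t^\nu h$ term on the left is $e^{-\lambda t}h(x)$ whose $e^{\lambda t}$-weighted integral diverges, so instead one should integrate $e^{\lambda t}\bigl(e^{-\lambda t}h - p_t h\bigr)$... the correct bookkeeping is to apply $G_{-\lambda}^\nu$ to the eigen-equation: $G_{-\lambda}^\nu h = \int_0^\infty e^{\lambda t}e^{-\lambda t}h\,\mathrm dt$ diverges, so one instead uses the resolvent identity $G_{-\lambda}^\nu h = G_{-\lambda}h + G_{-\lambda}(\nu\, G_{-\lambda}^\nu h)$ in the limiting sense $\beta\downarrow-\lambda$, where on the left $\beta G_\beta^\nu h = \tfrac{\beta}{\beta+\lambda}h\to$ blows up but $(\beta+\lambda)G_\beta^\nu h\to h$; carefully passing $\beta\to-\lambda$ in $(\beta+\lambda)G_\beta^\nu h=(\beta+\lambda)G_\beta h+G_\beta(\nu\cdot(\beta+\lambda)G_\beta^\nu h)$ and noting $(\beta+\lambda)G_\beta h\to 0$ uniformly (since $G_\beta h$ stays bounded as $\beta\downarrow-\lambda>0$) gives $h=G_{-\lambda}(\nu h)$ by dominated convergence, using $\nu^\pm\in\mathcal K_\infty(1)$ to justify the limit under the integral.

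Part (iv) is where the real work is, and the main obstacle. The asymptotics \eqref{eq:int-h-0} must be read off from the representation in (iii), $h(x)=\int_{\mathbb R^d}G_{-\lambda}(x,y)h(y)\,\nu(\mathrm dy)$, combined with the resolvent asymptotics \eqref{eq:resolvent}: since $\nu$ has compact support, for $|x|$ large the kernel $G_{-\lambda}(x,y)\sim \tfrac{1}{\sqrt{-2\lambda}}\bigl(\tfrac{\sqrt{-2\lambda}}{2\pi|x-y|}\bigr)^{(d-1)/2}e^{-\sqrt{-2\lambda}|x-y|}$ uniformly for $y$ in that support, and $|x-y| = |x| - \langle x/|x|, y\rangle + O(1/|x|)$, so $h(x)\sim \tfrac{(\sqrt{-2\lambda})^{(d-3)/2}}{(2\pi|x|)^{(d-1)/2}}e^{-\sqrt{-2\lambda}|x|}\int e^{\sqrt{-2\lambda}\langle x/|x|,y\rangle}h(y)\,\nu(\mathrm dy)$; this is a directional version of \eqref{eq:est-eigen}. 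Then I would write $\int_{|y|>R}h(y)\,\mathrm dy$ in polar coordinates as $\int_R^\infty r^{d-1}\bigl(\int_{S^{d-1}}h(r\theta)\,\mathrm d\theta\bigr)\mathrm dr$, substitute the directional asymptotic for $h(r\theta)$, and perform the radial integral $\int_R^\infty r^{d-1}\cdot r^{-(d-1)/2}e^{-\sqrt{-2\lambda}r}\,\mathrm dr\sim (\sqrt{-2\lambda})^{-1}R^{(d-1)/2}e^{-\sqrt{-2\lambda}R}$ by Watson's lemma / integration by parts, collecting constants to match $c_d$ in \eqref{eq:const}. Positivity of $c_d$ is immediate since $h>0$, $\nu$ is (one can assume, by the $\lambda<0$ hypothesis) not purely negative in an effective sense — more precisely $\int(\int_{S^{d-1}}e^{\sqrt{-2\lambda}\langle\theta,z\rangle}\mathrm d\theta)h(z)\,\nu(\mathrm dz)>0$ because the eigen-equation $h=G_{-\lambda}(\nu h)$ with $h>0$ forces $\nu h$ to have positive total ``mass'' against the positive kernel, so $\nu$ cannot be such that this weighted integral is $\le 0$. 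The delicate points I anticipate: (a) making the use of \eqref{eq:resolvent} genuinely uniform in the direction $\theta\in S^{d-1}$ and in $y\in\operatorname{supp}\nu$ (a compactness argument, plus the explicit error bound behind \eqref{eq:resolvent}), (b) justifying interchange of the radial integral with the limit, for which a two-sided bound like \eqref{eq:est-eigen} furnishes an integrable dominating function, and (c) tracking the precise powers of $2\pi$ and $\sqrt{-2\lambda}$ through the polar-coordinate Jacobian and Watson's lemma to land exactly on the stated constant $c_d$; this last is routine but error-prone, and I would relegate the bookkeeping to the appendix (as the authors signal they do in Appendix \ref{appendix:int}).
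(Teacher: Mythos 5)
Parts (i)--(iii) of your plan follow essentially the paper's own route: (i) is the one-step Duhamel identity made rigorous through the Revuz correspondence and a Laplace-transform computation (no perturbation series is actually needed), (ii) is exactly the spectral-gap argument via \eqref{eq:heat-comp} and \eqref{eq:poincare} applied to $\varphi(\cdot)=p_{1/2}^{\nu}(\cdot,y)-e^{-\lambda/2}h(\cdot)h(y)$, and your resolvent-limit $\beta\downarrow-\lambda$ version of (iii) is a legitimate Laplace-domain variant of the paper's time-domain argument (divide $e^{-\lambda t}h-p_th=e^{-\lambda t}\int(\int_0^te^{\lambda s}p_s(x,z){\rm d}s)h(z)\nu({\rm d}z)$ by $e^{-\lambda t}$ and let $t\to\infty$); note that $(\beta+\lambda)G_\beta^{\nu}h=h$ holds identically for $\beta>-\lambda$, so no delicate limit is needed on that side.

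The genuine gap is the positivity of $c_d$ in (iv), which is part of the assertion and is also what makes the symbol $\sim$ in \eqref{eq:int-h-0} meaningful. Your argument --- ``the eigen-equation $h=G_{-\lambda}(h\nu)$ with $h>0$ forces the weighted integral to be positive'' --- does not work as stated: positivity of $\int G_{-\lambda}(x,z)h(z)\,\nu({\rm d}z)$ for every $x$ concerns a different kernel from the weight $\int_{S^{d-1}}e^{\sqrt{-2\lambda}\langle\theta,z\rangle}{\rm d}\theta$ appearing in \eqref{eq:const}, and since $\nu$ is signed you cannot simply pass the asymptotic relation \eqref{eq:resolvent} through the difference $\nu=\nu^{+}-\nu^{-}$: the correct procedure (and the paper's) is to establish $\int_{|y|>R}\bigl(\int G_{-\lambda}(y,z)h(z)\,\nu^{\pm}({\rm d}z)\bigr){\rm d}y\sim c_d^{\pm}e^{-\sqrt{-2\lambda}R}R^{(d-1)/2}$ separately for $\nu^{+}$ and $\nu^{-}$, and then the limit argument alone only yields $c_d=c_d^{+}-c_d^{-}\geq 0$; if $c_d^{+}=c_d^{-}$ the claimed asymptotics would fail. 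The same issue appears in your directional statement $h(r\theta)e^{\sqrt{-2\lambda}r}r^{(d-1)/2}\to{\rm const}\cdot\int e^{\sqrt{-2\lambda}\langle\theta,z\rangle}h(z)\,\nu({\rm d}z)$: taking limits of positive quantities gives only nonnegativity of the directional constant. The missing ingredient is the two-sided eigenfunction estimate \eqref{eq:est-eigen}: its lower bound, combined with \eqref{eq:asymp-r}, gives $\int_{|y|>R}h(y)\,{\rm d}y\geq c_1e^{-\sqrt{-2\lambda}R}R^{(d-1)/2}$ for large $R$, which forces $c_d>0$ (equivalently, it bounds each directional constant away from $0$). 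With that ingredient added, your polar-coordinate computation, the uniformity of \eqref{eq:resolvent} over the compact support, and the dominating function you propose all match the paper's proof.
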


\begin{proof} 
(i) 
Let $A_t:=A_t^{\nu}$. Since 
$$e^{A_t}-1=e^{A_t}(1-e^{-A_t})=\int_0^t e^{A_t-A_s}\,{\rm d}A_s=\int_0^t e^{A_{t-s}\circ\theta_s}\,{\rm d}A_s,$$
the Markov property implies that  for any $f\in {\cal B}_b({\mathbb R}^d)$,
\begin{equation}\label{eq:af-dist}
\begin{split}
E_x\left[e^{A_t}f(B_t)\right]-E_x\left[f(B_t)\right]
=E_x\left[\int_0^t E_{B_s}\left[e^{A_{t-s}}f(B_{t-s})\right]\,{\rm d}A_s\right]
\end{split}
\end{equation}
(see \cite[p.186, Exercise 1.13]{RY99} and \cite[(3.8)]{S18}). 
Then for any $\alpha>\beta(\nu)$, 
we have by the Fubini theorem and \cite[p.229, (5.1.14)]{FOT11},
\begin{equation}\label{eq:laplace-0}
\begin{split}
&\int_0^{\infty}e^{-\alpha t}E_x\left[\int_0^t E_{B_s}\left[e^{A_{t-s}}f(B_{t-s})\right]\,{\rm d}A_s\right]\,{\rm d}t\\
&=E_x\left[\int_0^{\infty}e^{-\alpha s}
\left(\int_s^{\infty} e^{-\alpha (t-s)}E_{B_s}[e^{A_{t-s}}f(B_{t-s})]\,{\rm d}t\right)\,{\rm d}A_s\right]\\
&=E_x\left[\int_0^{\infty}e^{-\alpha s}G_{\alpha}^{\nu}f(B_s)\,{\rm d}A_s\right]
=\int_{{\mathbb R}^d}G_{\alpha}(x,z)G_{\alpha}^{\nu}f(z)\,\nu({\rm d}z).
\end{split}
\end{equation}
In the same way,  we also have
\begin{equation*}
\int_0^{\infty}e^{-\alpha t}
\left[\int_0^t\left(\int_{{\mathbb R}^d} p_s(x,z)p_{t-s}^{\nu}f(z)\,\nu({\rm d}z)\right)\,{\rm d}s\right]\,{\rm d}t
=\int_{{\mathbb R}^d}G_{\alpha}(x,z)G_{\alpha}^{\nu}f(z)\,\nu({\rm d}z).
\end{equation*}
Hence by \eqref{eq:af-dist} and \eqref{eq:laplace-0},
\begin{equation*}
\begin{split}
G_{\alpha}^{\nu}f(x)-G_{\alpha}f(x)
&=\int_0^{\infty}e^{-\alpha t}E_x\left[\int_0^t E_{B_s}\left[e^{A_{t-s}}f(B_{t-s})\right]\,{\rm d}A_s\right]\,{\rm d}t\\
&=\int_0^{\infty}e^{-\alpha t}
\left[\int_0^t\left(\int_{{\mathbb R}^d} p_s(x,z)p_{t-s}^{\nu}f(z)\,\nu({\rm d}z)\right)\,{\rm d}s\right]\,{\rm d}t,
\end{split}
\end{equation*}
which implies that 
\begin{equation}\label{eq:semi-dist}
p_t^{\nu}f(x)-p_tf(x)
=\int_0^t \left(\int_{{\mathbb R}^d}p_s(x,z)p_{t-s}^{\nu}f(z)\,\nu({\rm d}z)\right)\,{\rm d}s.
\end{equation}
We then get (i) by noting that $p_t(x,y)=p_t(y,x)$ and  $p_t^{\nu}(x,y)=p_t^{\nu}(y,x)$.

(ii) For  fixed $y\in {\mathbb R}^d$ and $s>0$, let 
\begin{equation*}
\varphi(z)=p_s^{\nu}(z,y)-e^{-\lambda s}h(z)h(y).
\end{equation*}
Since $h$ is bounded on ${\mathbb R}^d$, there exists $c_1(s)>0$ by \eqref{eq:heat-comp} such that 
for any $y\in {\mathbb R}^d$, $$\|\varphi\|_{L^2({\mathbb R}^d)}\leq c_1(s).$$
By noting that $\int_{{\mathbb R}^d}\varphi(z) h(z)\,{\rm d}z=0$, 
there exists $c_2(s)>0$ by \eqref{eq:poincare} such that 
for any $x,y\in {\mathbb R}^d$ and $t\geq 1/2$, 
\begin{equation*}
|p_t^{\nu}\varphi(x)|=|p_{t+s}^{\nu}(x,y)-e^{-\lambda(t+s)}h(x)h(y)|\leq c_2(s)e^{-\lambda_2 t}.
\end{equation*}
The proof is complete by taking $s=1/2$ and then by replacing $t+1/2$ with $t$.

(iii) 
We have by \eqref{eq:eigen-eq}, \eqref{eq:semi-dist} and the Fubini theorem, 
\begin{equation*}
\begin{split}
&e^{-\lambda t}h(x)-p_th(x)
=p_t^{\nu}h(x)-p_th(x)
=\int_0^t\left(\int_{{\mathbb R}^d} p_s(x,z)p_{t-s}^{\nu}h(z)\,\nu({\rm d}z)\right)\,{\rm d}s\\
&=\int_0^t\left(\int_{{\mathbb R}^d} p_s(x,z)e^{-\lambda(t-s)}h(z)\,\nu({\rm d}z)\right)\,{\rm d}s
=e^{-\lambda t}\int_{{\mathbb R}^d}\left(\int_0^t e^{\lambda s}p_s(x,z)\,{\rm d}s\right)h(z)\,\nu({\rm d}z).
\end{split}
\end{equation*}
Then (iii) follows by dividing both sides above by $e^{-\lambda t}$ 
and then by letting $t\rightarrow\infty$.

(iv) \ By (iii), 
\begin{equation}\label{eq:int-resolvent}
\begin{split}
&\int_{|y|>R}h(y)\,{\rm d}y
=\int_{|y|>R}\left(\int_{{\mathbb R}^d}G_{-\lambda}(y,z)h(z)\nu({\rm d}z)\right)\,{\rm d}y\\
&=\int_{|y|>R}\left(\int_{{\mathbb R}^d}G_{-\lambda}(y,z)h(z)\nu^{+}({\rm d}z)\right)\,{\rm d}y
-\int_{|y|>R}\left(\int_{{\mathbb R}^d}G_{-\lambda}(y,z)h(z)\nu^{-}({\rm d}z)\right)\,{\rm d}y.
\end{split}
\end{equation}
Since $\nu^{+}$ and $\nu^{-}$ are compactly supported in ${\mathbb R}^d$, 
we see by \eqref{eq:resolvent} that
\begin{equation}\label{eq:int-asymp-1}
\begin{split}
&\int_{|y|>R}\left(\int_{{\mathbb R}^d}G_{-\lambda}(y,z)h(z)\nu^{\pm}({\rm d}z)\right)\,{\rm d}y\\
&\sim \frac{(\sqrt{-2\lambda})^{(d-3)/2}}{(2\pi)^{(d-1)/2}}
\int_{|y|>R}\left(\int_{{\mathbb R}^d}
\frac{e^{-\sqrt{-2\lambda}|y-z|}}{|y-z|^{(d-1)/2}}h(z)\,\nu^{\pm}({\rm d}z)\right)\,{\rm d}y \ (R\rightarrow\infty).
\end{split}
\end{equation}

Let us calculate the integral in the last term of \eqref{eq:int-asymp-1}. 
Let $(r,\theta)\in (0,\infty)\times S^{d-1}$ be the polar decomposition in ${\mathbb R}^d$. 
Then 
$$\sup_{z\in \supp[|\nu|], \theta\in S^{d-1}}\left|\frac{r}{|(r,\theta)-z|}-1\right|\rightarrow 0 \ (r\rightarrow\infty).$$
Since 
$$\sup_{z\in \supp[|\nu|], \theta\in S^{d-1}}\left||(r,\theta)-z|-r+\langle \theta, z\rangle \right|
\rightarrow 0 \ (r\rightarrow\infty)$$
and there exists $c>0$ such that  $|e^x-1|\leq 2|x| \ (|x|\leq c)$, we also have for any $\alpha>0$,
$$\sup_{z\in \supp[|\nu|], \theta\in S^{d-1}}\left|e^{-\sqrt{2\alpha}(|(r,\theta)-z|-r)}
-e^{\sqrt{2\alpha}\langle \theta,z\rangle} \right|\rightarrow 0 \ (r\rightarrow\infty).$$
Therefore,  
$$\sup_{z\in \supp[|\nu|], \theta\in S^{d-1}}\left|e^{-\sqrt{2\alpha}(|(r,\theta)-z|-r)}\left(\frac{r}{|(r,\theta)-z|}\right)^{(d-1)/2}
-e^{\sqrt{2\alpha}\langle \theta,z\rangle} \right|\rightarrow 0 \ (r\rightarrow\infty),$$
which implies that 
\begin{equation}\label{eq:int-rad}
\begin{split}
&\int_{|y|>R}\left(\int_{{\mathbb R}^d}\frac{e^{-\sqrt{-2\lambda}|y-z|}}{|y-z|^{(d-1)/2}}
h(z)\,\nu^{\pm}({\rm d}z)\right)\,{\rm d}y\\
&=\int_R^{\infty} \left[\int_{S^{d-1}}
\left(\int_{{\mathbb R}^d}\frac{e^{-\sqrt{-2\lambda}|(r,\theta)-z|}}{|(r,\theta)-z|^{(d-1)/2}}h(z)\,\nu^{\pm}({\rm d}z)\right)
\,{\rm d}\theta\right]r^{d-1}\,{\rm d}r\\
&=\int_R^{\infty} \left[\int_{{\mathbb R}^d}\left(\int_{S^{d-1}}
\frac{e^{-\sqrt{-2\lambda}|(r,\theta)-z|}}{|(r,\theta)-z|^{(d-1)/2}}
\,{\rm d}\theta\right)h(z)\,\nu^{\pm}({\rm d}z)\right]r^{d-1}\,{\rm d}r\\
&\sim \int_R^{\infty} e^{-\sqrt{-2\lambda}r}r^{(d-1)/2}\,{\rm d}r 
\int_{{\mathbb R}^d}\left(\int_{S^{d-1}}
e^{\sqrt{-2\lambda}\langle \theta,z\rangle}\,{\rm d}\theta \right)
h(z)\,\nu^{\pm}({\rm d}z)
\ (R\rightarrow\infty).
\end{split}
\end{equation}

Noting that 
\begin{equation}\label{eq:asymp-r}
\int_R^{\infty} e^{-\sqrt{-2\lambda}r}r^{(d-1)/2}\,{\rm d}r 
\sim \frac{1}{\sqrt{-2\lambda}}e^{-\sqrt{-2\lambda}R}R^{(d-1)/2}
\ (R\rightarrow\infty),
\end{equation}
we obtain by \eqref{eq:int-asymp-1} and \eqref{eq:int-rad}, 
\begin{equation}\label{eq:int-asymp-2}
\int_{|y|>R}\left(\int_{{\mathbb R}^d}G_{-\lambda}(y,z)h(z)\nu^{\pm}({\rm d}z)\right)\,{\rm d}y
\sim  c_d^{\pm}e^{-\sqrt{-2\lambda}R}R^{(d-1)/2}\ (R\rightarrow\infty)
\end{equation}
for 
\begin{equation*}
c_d^{\pm}=\frac{(\sqrt{-2\lambda})^{(d-5)/2}}{(2\pi)^{(d-1)/2}}
\int_{{\mathbb R}^d}\left(\int_{S^{d-1}}
e^{\sqrt{-2\lambda}\langle \theta,z\rangle}\,{\rm d}\theta \right)
h(z)\,\nu^{\pm}({\rm d}z).
\end{equation*}
Here we note that by \eqref{eq:est-eigen} and \eqref{eq:asymp-r}, there exist  positive constants $c_1$, $c_2$, $R_0$ 
such that for any $R\geq R_0$,
\begin{equation*}
c_1 e^{-\sqrt{-2\lambda}R}R^{(d-1)/2}
\leq \int_{|y|>R}h(y)\,{\rm d}y\leq c_2 e^{-\sqrt{-2\lambda}R}R^{(d-1)/2}.
\end{equation*}
Hence the proof is complete by \eqref{eq:int-resolvent} and \eqref{eq:int-asymp-2}. 
\end{proof}

\begin{rem}\label{rem:one-dim}\rm 
Let $\nu$ satisfy the condition in Lemma \ref{lem:int-eigen} (iv). 
\begin{enumerate}
\item[(i)] 
For $R>0$ and $\Theta\subset S^{d-1}$, let 
$C_{\Theta}(R)=\left\{x\in {\mathbb R}^d \mid |x|>R, x/|x|\in \Theta\right\}$. 
In the same way as (iv), we get
\begin{equation*}
\int_{C_{\Theta}(R)} h(y)\,{\rm d}y
\sim c_{d,\Theta} e^{-\sqrt{-2\lambda}R} 
R^{(d-1)/2} \ (R\rightarrow\infty)
\end{equation*}
for
\begin{equation*}
c_{d,\Theta}=\frac{(\sqrt{-2\lambda})^{(d-5)/2}}{(2\pi)^{(d-1)/2}}
\int_{{\mathbb R}^d}\left(\int_{\Theta} e^{\sqrt{-2\lambda}\langle \theta,z\rangle}\,{\rm d}\theta\right)
h(z)\,\nu({\rm d}z). 
\end{equation*}
In particular, if $d=1$ and $\Theta=\{1\}$, then   
\begin{equation}\label{eq:int-h}
\int_R^{\infty}h(y)\,{\rm d}y
\sim c_0 e^{-\sqrt{-2\lambda}R} \ (R\rightarrow\infty)
\end{equation}
for 
\begin{equation}\label{eq:const-1-0}
c_0=\frac{1}{-2\lambda}
\int_{{\mathbb R}}e^{\sqrt{-2\lambda}z}h(z)\,\nu({\rm d}z).
\end{equation}
\item[(ii)] Let $\mu$ be a positive Radon measure on ${\mathbb R}^d$. 
For $\alpha>0$, let $G_{\alpha}\mu$ be 
the $\alpha$-potential of $\mu$ defined by  
\begin{equation*}
G_{\alpha}\mu(x)=\int_{{\mathbb R}^d}G_{\alpha}(x,y)\,\mu({\rm d}y).
\end{equation*}
Then by \eqref{eq:resolvent}, \eqref{eq:est-eigen} and Lemma \ref{lem:int-eigen} (iii), 
there exist positive constants $c_1$ and $c_2$ such that for any $x\in {\mathbb R}^d$, 
\begin{equation}\label{eq:potential}
c_1h(x)\leq G_{-\lambda}|\nu|(x)\leq c_2h(x). 
\end{equation}
\end{enumerate}
\end{rem}

\subsection{Pointwise estimates}
Let $\nu^{+}$ and $\nu^{-}$ be Kato class measures with compact support in ${\mathbb R}^d$ 
such that $\lambda<0$. 
Define 
\begin{equation*}
q_t(x,y)=p_t^{\nu}(x,y)-p_t(x,y)-e^{-\lambda t}h(x)h(y)
\end{equation*}
so that for $R>0$,
\begin{equation}\label{eq:q_t-0}
E_x\left[e^{A_t^{\nu}};|B_t|>R\right]
=P_x(|B_t|>R)+e^{-\lambda t}h(x)\int_{|y|>R}h(y)\,{\rm d}y
+\int_{|y|>R}q_t(x,y)\,{\rm d}y.
\end{equation}
In this subsection, we evaluate the last term in the right hand side above 
by Lemma \ref{lem:int-eigen}.

For $x\in {\mathbb R}^d$ and $R>0$, let $B_x(R):=\{y\in {\mathbb R}^d \mid |y-x|<R\}$. 
We fix $M>0$ so that $\supp[|\nu|]\subset B_0(M)$. 
For $c>0$ with $c\geq -\lambda_2$, we define 
\begin{equation}\label{eq:def-i}
I_c(t,R)=
\begin{cases}
e^{c t-\sqrt{2c}R}R^{(d-1)/2} & (\lambda_2<0),\\
t P_0(|B_t|> R-M)\wedge e^{ct-\sqrt{2c}R}R^{(d-1)/2} & (\lambda_2=0).
\end{cases}
\end{equation}
We also define 
\begin{equation}\label{eq:def-j}
J(t,R)=
e^{-\lambda t-\sqrt{-2\lambda}R}R^{(d-1)/2}
\int_{(\sqrt{-2\lambda}t-R)/\sqrt{2t}}^{\infty}e^{-v^2}\,{\rm d}v.
\end{equation}

\begin{prop}\label{prop:est-int}
Let $\nu^{+}$ and $\nu^{-}$ be Kato class measures with compact support in ${\mathbb R}^d$ 
such that $\lambda<0$.
\begin{enumerate}
\item For any $c>0$ with $c\geq -\lambda_2$, there exists $C_1>0$ such that 
for any $x\in {\mathbb R}^d$, $t\geq 1$ and $R>2M$,
\begin{equation}\label{e:int-upper-conclusion}
\left|\int_{|y|>R}q_t(x,y)\,{\rm d}y\right|
\leq C_1\left(h(x) P_0(|B_t|>R-M)+I_c(t,R)+h(x)J(t,R)\right).
\end{equation}
\item There exists $C_2>0$ such that 
for any $t\geq 1$,
\begin{equation}\label{eq:fk-asymp}
\sup_{x\in {\mathbb R}^d}\left|\int_{{\mathbb R}^d}q_t(x,y)\,{\rm d}y\right|
\leq C_2(t\vee e^{-\lambda_2 t}).
\end{equation}
\end{enumerate}
\end{prop}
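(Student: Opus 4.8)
The plan is to combine the Duhamel identity of Lemma~\ref{lem:int-eigen}(i) with the resolvent representation $h(y)=\int_{{\mathbb R}^d}G_{-\lambda}(y,z)h(z)\,\nu({\rm d}z)$ of Lemma~\ref{lem:int-eigen}(iii) and exploit a cancellation, using throughout that $\supp[|\nu|]\subset B_0(M)$ confines the inner variable $z$ to $\{|z|\le M\}$. First, integrating Lemma~\ref{lem:int-eigen}(i) over $\{|y|>R\}$ (Fubini is legitimate since $|\nu|$ is finite with compact support) gives
\[
\int_{|y|>R}\bigl(p_t^\nu(x,y)-p_t(x,y)\bigr)\,{\rm d}y=\int_0^t\int_{{\mathbb R}^d}p_s^\nu(x,z)\,P_z(|B_{t-s}|>R)\,\nu({\rm d}z)\,{\rm d}s .
\]
Second, expanding $G_{-\lambda}(y,z)=\int_0^\infty e^{\lambda r}p_r(z,y)\,{\rm d}r$, integrating over $\{|y|>R\}$ (finite by \eqref{eq:est-eigen}) and substituting $r=t-s$ expresses $e^{-\lambda t}h(x)\int_{|y|>R}h(y)\,{\rm d}y$ as the sum of $h(x)\int_0^t e^{-\lambda s}\int h(z)P_z(|B_{t-s}|>R)\,\nu({\rm d}z)\,{\rm d}s$ and $h(x)\int_0^\infty e^{\lambda s}\int h(z)P_z(|B_{t+s}|>R)\,\nu({\rm d}z)\,{\rm d}s$. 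Since $q_t(x,y)=p_t^\nu(x,y)-p_t(x,y)-e^{-\lambda t}h(x)h(y)$, subtracting makes the first of these two summands cancel against the leading part of the Duhamel integral (the one obtained on replacing $p_s^\nu(x,z)$ by $e^{-\lambda s}h(x)h(z)$), and we are left with $\int_{|y|>R}q_t(x,y)\,{\rm d}y={\cal E}(t,R)-{\cal F}(t,R)$, where
\begin{align*}
{\cal E}(t,R)&=\int_0^t\int\bigl(p_s^\nu(x,z)-e^{-\lambda s}h(x)h(z)\bigr)P_z(|B_{t-s}|>R)\,\nu({\rm d}z)\,{\rm d}s ,\\
{\cal F}(t,R)&=h(x)\int_0^\infty e^{\lambda s}\int h(z)P_z(|B_{t+s}|>R)\,\nu({\rm d}z)\,{\rm d}s .
\end{align*}

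For the ``front'' term ${\cal F}$ I would use $|z|\le M<R/2$ to pass to $P_z(|B_{t+s}|>R)\le P_0(|B_{t+s}|>R-M)$ and $h(z)\le\|h\|_\infty$, reducing everything to the purely Brownian, $x$-free inequality
\[
\int_0^\infty e^{\lambda s}P_0(|B_{t+s}|>a)\,{\rm d}s\le C\Bigl(P_0(|B_t|>a)+e^{-\lambda t-\sqrt{-2\lambda}a}a^{(d-1)/2}\int_{(\sqrt{-2\lambda}t-a)/\sqrt{2t}}^{\infty}e^{-v^2}\,{\rm d}v\Bigr),
\]
which is precisely the elementary calculation to be carried out in Appendix~\ref{appendix:int}: one applies Laplace's method to the exponent $\lambda s-a^2/(2(t+s))$, whose maximiser $a/\sqrt{-2\lambda}-t$ lies in $(0,\infty)$ exactly in the ``far-front'' regime $a>\sqrt{-2\lambda}t$ (where the $\int e^{-v^2}$ factor stays between two positive constants), while in the complementary regime the integrand is decreasing in $s$ with value $P_0(|B_t|>a)$ at $s=0$. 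Taking $a=R-M\in(M,R)$, the exponential and polynomial prefactors change by at most the factor $e^{\sqrt{-2\lambda}M}$ and the Gaussian-tail factor only decreases, so $|{\cal F}|\le C\,h(x)\bigl(P_0(|B_t|>R-M)+J(t,R)\bigr)$.

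For the ``spectral-gap'' term ${\cal E}$ I would split $\int_0^t=\int_0^1+\int_1^t$. On $[1,t]$, Lemma~\ref{lem:int-eigen}(ii) gives $|p_s^\nu(x,z)-e^{-\lambda s}h(x)h(z)|\le Ce^{-\lambda_2 s}\le Ce^{cs}$ (using $c\ge-\lambda_2$); with $P_z(|B_{t-s}|>R)\le P_0(|B_{t-s}|>R-M)$ and the substitution $u=t-s$ this piece is at most $Ce^{ct}\int_{|y|>R-M}G_c(0,y)\,{\rm d}y\le Ce^{ct-\sqrt{2c}R}R^{(d-1)/2}$ by \eqref{eq:resolvent} and the polar-coordinate computation used in the proof of Lemma~\ref{lem:int-eigen}(iv), while when $\lambda_2=0$ the monotonicity of $u\mapsto P_0(|B_u|>a)$ additionally gives $\int_1^t P_0(|B_{t-s}|>R-M)\,{\rm d}s\le t\,P_0(|B_t|>R-M)$; hence in every case this piece is $\le C\,I_c(t,R)$. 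On $[0,1]$, bounding $P_z(|B_{t-s}|>R)\le P_0(|B_t|>R-M)$ reduces the task to the estimate $\int_0^1\int p_s^\nu(x,z)\,|\nu|({\rm d}z)\,{\rm d}s\le C\,h(x)$: this follows from \eqref{eq:heat-comp} and the Kato bound $\sup_x\int G_1(x,z)\,|\nu|({\rm d}z)<\infty$ when $|x|$ lies in a fixed ball (where $h$ is bounded below by a positive constant), whereas for $|x|$ large \eqref{eq:heat-comp} together with $\supp[|\nu|]\subset B_0(M)$ forces the left-hand side to decay super-exponentially in $|x|$, which is dominated by the lower bound \eqref{eq:est-eigen} on $h$. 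Thus ${\cal E}\le C\bigl(h(x)P_0(|B_t|>R-M)+I_c(t,R)\bigr)$, which together with the bound on ${\cal F}$ proves \eqref{e:int-upper-conclusion}. The estimate \eqref{eq:fk-asymp} comes from running the same argument with $\{|y|>R\}$ replaced by ${\mathbb R}^d$: now $\int p_{t-s}(z,y)\,{\rm d}y=1$ and $\int_{{\mathbb R}^d}h(y)\,{\rm d}y=(-\lambda)^{-1}\int h\,{\rm d}\nu<\infty$, the identity collapses to $\int_{{\mathbb R}^d}q_t(x,y)\,{\rm d}y=\int_0^t\int\bigl(p_s^\nu(x,z)-e^{-\lambda s}h(x)h(z)\bigr)\nu({\rm d}z)\,{\rm d}s-h(x)\int h(y)\,{\rm d}y$, and the same split (now with no $P_0$ factor) bounds the $[1,t]$ part by $C|\nu|({\mathbb R}^d)\int_1^t e^{-\lambda_2 s}\,{\rm d}s\asymp t\vee e^{-\lambda_2 t}$, the $[0,1]$ part by $C$ via the Kato bound, and $h(x)\int h(y)\,{\rm d}y$ by $\|h\|_\infty\int h<\infty$, whence $\sup_{x\in{\mathbb R}^d}\bigl|\int_{{\mathbb R}^d}q_t(x,y)\,{\rm d}y\bigr|\le C_2(t\vee e^{-\lambda_2 t})$ for $t\ge1$.

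The main obstacle is the Gaussian estimate for ${\cal F}$: one must control the transition at $a\approx\sqrt{-2\lambda}t$ cleanly enough that both regimes are captured, uniformly in $t\ge1$ and $R>2M$, by the single function $J(t,R)$ together with $P_0(|B_t|>R-M)$. A secondary delicate point is the $h(x)$-versus-constant dichotomy in the short-time part of ${\cal E}$, which is the one place where the sharp two-sided eigenfunction bounds \eqref{eq:est-eigen} enter.
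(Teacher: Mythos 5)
Your proposal is correct and takes essentially the same route as the paper: the same exact identity built from Lemma~\ref{lem:int-eigen}(i) and (iii) (you subtract the eigen-approximation over all of $[0,t]$ and carry the tail term from $s=t$, while the paper splits at $s=1$ and carries it from $t-1$, an immaterial difference), the same short-time bound $\int_0^1\int p_s^{\nu}(x,z)\,|\nu|({\rm d}z)\,{\rm d}s\leq Ch(x)$, the same spectral-gap estimate via Lemma~\ref{lem:int-eigen}(ii) and \eqref{eq:resolvent} giving $I_c(t,R)$, and the same Gaussian tail analysis (Appendix~\ref{appendix:int}) giving $h(x)\bigl(P_0(|B_t|>R-M)+J(t,R)\bigr)$, with the $R=0$ case yielding (ii) exactly as in the paper. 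The only cosmetic deviations are your bounded-versus-large-$|x|$ dichotomy using \eqref{eq:est-eigen} in place of the paper's direct use of \eqref{eq:potential}, and your Laplace-method phrasing of the front-term estimate, which the paper implements via integration by parts plus \eqref{eq:negative-upper} and \eqref{eq:positive-upper}.
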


\begin{rem}\label{rem:uniform}\rm 
Let $f\in {\cal B}_b({\mathbb R}^d)$. Since 
\begin{equation*}
E_x\left[e^{A_t^{\nu}}f(B_t)\right]
=E_x\left[f(B_t)\right]+e^{-\lambda t}h(x)\int_{{\mathbb R}^d}f(y)h(y)\,{\rm d}y+\int_{{\mathbb R}^d}q_t(x,y)f(y)\,{\rm d}y,
\end{equation*}
there exists $C>0$ by Proposition \ref{prop:est-int} (ii) such that 
for any $t\geq 1$, 
\begin{equation*}
\sup_{x\in {\mathbb R}^d}
\left|e^{\lambda t}E_x\left[e^{A_t^{\nu}}f(B_t)\right]-h(x)\int_{{\mathbb R}^d}f(y)h(y)\,{\rm d}y\right|
\leq C\|f\|_{\infty}e^{\lambda t}(t\vee e^{-\lambda_2 t}).
\end{equation*}
As $\lambda<\lambda_2\leq 0$, the right hand side above goes to $0$ as $t\rightarrow\infty$.
\end{rem}

To show Proposition \ref{prop:est-int}, we deform $q_t(x,y)$ as follows. 
For $t\geq 1$, we have  by Lemma \ref{lem:int-eigen} (i),
\begin{equation*}
\begin{split}
&p_t^{\nu}(x,y)-p_t(x,y)\\
&=\int_0^1 \left(\int_{{\mathbb R}^d}p_s^{\nu}(x,z)p_{t-s}(z,y)\,\nu({\rm d}z)\right)\,{\rm d}s
+\int_1^t \left(\int_{{\mathbb R}^d}p_s^{\nu}(x,z)p_{t-s}(z,y)\,\nu({\rm d}z)\right)\,{\rm d}s.
\end{split}
\end{equation*}
The second term in the right hand side above is equal to 
\begin{equation*}
\begin{split}
&\int_1^t \left[\int_{{\mathbb R}^d}
\left(p_s^{\nu}(x,z)-e^{-\lambda s}h(x)h(z)\right)p_{t-s}(z,y)\,\nu({\rm d}z)\right]\,{\rm d}s\\
&+h(x)\int_1^t e^{-\lambda s}\left(\int_{{\mathbb R}^d}h(z)p_{t-s}(z,y)\,\nu({\rm d}z)\right)\,{\rm d}s.
\end{split}
\end{equation*}
Since the Fubini theorem and Lemma \ref{lem:int-eigen} (iii) yield that 
\begin{equation*}
\int_0^{\infty} e^{\lambda s}\left(\int_{{\mathbb R}^d}h(z)p_s(z,y)\,\nu({\rm d}z)\right)\,{\rm d}s
=\int_{{\mathbb R}^d}G_{-\lambda}(y,z)h(z)\,\nu({\rm d}z)=h(y),
\end{equation*}
we obtain by the change of variables,
\begin{equation*}
\begin{split}
&\int_1^t e^{-\lambda s}\left(\int_{{\mathbb R}^d}h(z)p_{t-s}(z,y)\,\nu({\rm d}z)\right)\,{\rm d}s
=e^{-\lambda t}\int_0^{t-1} e^{\lambda s}
\left(\int_{{\mathbb R}^d}h(z)p_s(z,y)\,\nu({\rm d}z)\right)\,{\rm d}s\\
&=e^{-\lambda t}\int_0^{\infty} e^{\lambda s}\left(\int_{{\mathbb R}^d}h(z)p_s(z,y)\,\nu({\rm d}z)\right)\,{\rm d}s
-e^{-\lambda t}\int_{t-1}^{\infty} e^{\lambda s}\left(\int_{{\mathbb R}^d}h(z)p_s(z,y)\,\nu({\rm d}z)\right)\,{\rm d}s\\
&=e^{-\lambda t}h(y)
-e^{-\lambda t}\int_{t-1}^{\infty} e^{\lambda s}
\left(\int_{{\mathbb R}^d}h(z)p_s(z,y)\,\nu({\rm d}z)\right)\,{\rm d}s
\end{split}
\end{equation*}
and thus 
\begin{equation*}
\begin{split}
q_t(x,y)&=p_t^{\nu}(x,y)-p_t(x,y)-e^{-\lambda t}h(x)h(y)\\
&=\int_0^1 \left(\int_{{\mathbb R}^d}p_s^{\nu}(x,z)p_{t-s}(z,y)\,\nu({\rm d}z)\right)\,{\rm d}s\\
&+\int_1^t \left[\int_{{\mathbb R}^d}\left(p_s^{\nu}(x,z)-e^{-\lambda s}h(x)h(z)\right)p_{t-s}(z,y)\,\nu({\rm d}z)\right]\,{\rm d}s\\
&-e^{-\lambda t}h(x)\int_{t-1}^{\infty} e^{\lambda s}\left(\int_{{\mathbb R}^d}h(z)p_s(z,y)\,\nu({\rm d}z)\right)\,{\rm d}s.
\end{split}
\end{equation*}
Then by the Fubini theorem,
\begin{equation}\label{eq:q_t}
\begin{split}
\int_{{|y|>R}}q_t(x,y)\,{\rm d}y
&=\int_0^1 \left(\int_{{\mathbb R}^d}p_s^{\nu}(x,z)P_z(|B_{t-s}|>R)\,\nu({\rm d}z)\right)\,{\rm d}s\\
&+\int_1^t  \left[\int_{{\mathbb R}^d}\left(p_s^{\nu}(x,z)-e^{-\lambda s}h(x)h(z)\right)P_z(|B_{t-s}|>R)\,\nu({\rm d}z)\right]\,{\rm d}s\\
&-e^{-\lambda t}h(x)
\int_{t-1}^{\infty} e^{\lambda s}\left(\int_{{\mathbb R}^d}h(z)P_z(|B_s|>R)\,\nu({\rm d}z)\right)\,{\rm d}s\\
&={\rm (I)}+{\rm (II)}-{\rm (III)}.
\end{split}
\end{equation}

We first discuss the upper bound of ${\rm (I)}$. 
\begin{lem}\label{lem:i}
Under the same setting as in Proposition {\rm \ref{prop:est-int}}, 
there exists $C>0$ such that 
for any $x\in {\mathbb R}^d$, $t\geq 1$ and $R>M$, 
\begin{equation}\label{eq:int-upper-1-2}
|{\rm (I)}|\leq C h(x) P_0(|B_t|>R-M).
\end{equation}
\end{lem}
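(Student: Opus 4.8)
The plan is to bound $\mathrm{(I)}$, which is
\[
\mathrm{(I)}=\int_0^1 \left(\int_{{\mathbb R}^d}p_s^{\nu}(x,z)P_z(|B_{t-s}|>R)\,\nu({\rm d}z)\right)\,{\rm d}s,
\]
by first controlling the spatial factor $P_z(|B_{t-s}|>R)$ on the support of $|\nu|$ and then absorbing the $z$-integral against $|\nu|$ into a factor comparable to $h(x)$. First I would note that $|\nu|=\nu^++\nu^-$ is supported in $B_0(M)$, so for $z\in\supp[|\nu|]$ we have $|z|\le M$, and hence $P_z(|B_{t-s}|>R)\le P_0(|B_{t-s}|>R-M)$ by translation (moving the starting point toward the target can only increase the exit probability over a centered ball of radius $R$; more carefully, $\{|B_{t-s}|>R\}$ under $P_z$ is $\{|z+\widetilde B_{t-s}|>R\}$ under $P_0$, and $|z+w|>R$ forces $|w|>R-|z|\ge R-M$). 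Since $s\in[0,1]$ and $t\ge 1$, a standard monotonicity/Gaussian-tail comparison gives $P_0(|B_{t-s}|>R-M)\le C\,P_0(|B_t|>R-M)$ for a constant $C$ depending only on $d$ and $M$: indeed for $R>M$ this reduces to comparing $\int_{R-M}^\infty e^{-r^2/2(t-s)}r^{d-1}\,dr$ with the same integral at time $t$, and the ratio is bounded uniformly over $s\in[0,1]$, $t\ge1$. This bounds the inner spatial factor by $C\,P_0(|B_t|>R-M)$ uniformly in $z\in\supp[|\nu|]$ and $s\in[0,1]$.

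Next I would pull that bound out and estimate what remains:
\[
|\mathrm{(I)}|\le C\,P_0(|B_t|>R-M)\int_0^1\left(\int_{{\mathbb R}^d}p_s^{\nu}(x,z)\,|\nu|({\rm d}z)\right){\rm d}s.
\]
The double integral is at most $\int_0^\infty e^{\lambda s}\cdot e^{-\lambda s}\bigl(\int p_s^\nu(x,z)\,|\nu|(dz)\bigr)ds$; using $e^{-\lambda s}\le e^{-\lambda}$ on $s\in[0,1]$ this is $\le e^{-\lambda}\int_0^1 e^{\lambda s}\int p_s^\nu(x,z)\,|\nu|(dz)\,ds\le e^{-\lambda}\int_0^\infty e^{\lambda s}\int p_s^\nu(x,z)\,|\nu|(dz)\,ds = e^{-\lambda}\,G_{-\lambda}^{\nu}|\nu|(x)$, where $G^\nu_{-\lambda}$ is the Feynman-Kac resolvent (this is legitimate since $-\lambda>\beta(\nu)$ is not automatic, so I would instead argue directly: $\int_0^1 p_s^\nu(x,z)\,ds$ integrated against $|\nu|$ is bounded by $c\,G_{-\lambda}|\nu|(x)$ using the heat-kernel comparison \eqref{eq:heat-comp} on $(0,1]$ together with the Green-function tail \eqref{eq:resolvent} and the compact support of $|\nu|$; alternatively appeal to the finiteness of $\sup_x\int_0^1 p_s^\nu(x,z)\,ds\,|\nu|(dz)$ coming from $|\nu|\in{\cal K}$). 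In either route the key point is that $\int_0^1\int p_s^\nu(x,z)\,|\nu|(dz)\,ds\le c_2\,h(x)$ for all $x$, which is exactly the content of \eqref{eq:potential} in Remark \ref{rem:one-dim} (ii): $G_{-\lambda}|\nu|(x)\le c_2 h(x)$, and the Feynman-Kac semigroup on the short time interval is controlled by the free one via \eqref{eq:heat-comp}. Combining, $|\mathrm{(I)}|\le C\,h(x)\,P_0(|B_t|>R-M)$.

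The main obstacle is the passage from the free Brownian estimate to the Feynman-Kac kernel $p_s^\nu(x,z)$ on $s\in(0,1]$ in a way that produces the clean factor $h(x)$ rather than just something bounded: one must use \eqref{eq:heat-comp} to replace $p_s^\nu(x,z)$ by $c_1 p_{c_2 s}(x,z)$, then recognize $\int_0^1\int_{{\mathbb R}^d} p_{c_2 s}(x,z)\,|\nu|(dz)\,ds$ as comparable (after a change of variables and using the exponential tail of the free resolvent plus the boundedness and exponential decay \eqref{eq:est-eigen} of $h$, together with Lemma \ref{lem:int-eigen} (iii)) to $G_{-\lambda}|\nu|(x)$, hence to $h(x)$ by \eqref{eq:potential}. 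The remaining verification — that the Gaussian tail $P_0(|B_{t-s}|>R-M)$ is bounded by a constant times $P_0(|B_t|>R-M)$ uniformly for $s\in[0,1]$, $t\ge1$, $R>M$ — is elementary and I would relegate it to a one-line Gaussian computation rather than belabor it.
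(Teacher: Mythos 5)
Your argument is correct and follows essentially the same route as the paper: bound $P_z(|B_{t-s}|>R)\leq P_0(|B_{t-s}|>R-M)\leq P_0(|B_t|>R-M)$ (the paper uses the exact monotonicity of $u\mapsto P_0(|B_u|>r)$, so no extra constant is needed), then control $\int_0^1\int p_s^{\nu}(x,z)\,|\nu|({\rm d}z)\,{\rm d}s$ via \eqref{eq:heat-comp} and the free resolvent, concluding with $G_{-\lambda}|\nu|(x)\leq c\,h(x)$ from \eqref{eq:potential}. Your brief detour through the Feynman--Kac resolvent $G^{\nu}_{-\lambda}$ is unnecessary, but you correctly discard it in favor of the same estimate \eqref{eq:upper-01} the paper uses.
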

\begin{proof}
For any $R>M$ and $z\in \supp[|\nu|]$, 
\begin{equation}\label{eq:comp-upper}
P_z(|B_{t-s}|>R)\leq P_0(|B_{t-s}|>R-M)\leq P_0(|B_t|>R-M)
\end{equation}
and thus
\begin{equation}\label{eq:int-upper-1}
|{\rm (I)}|\leq P_0(|B_t|>R-M)
\int_0^1 \left(\int_{{\mathbb R}^d}p_s^{\nu}(x,z)\,|\nu|({\rm d}z)\right)\,{\rm d}s.
\end{equation}
Then by \eqref{eq:heat-comp} and \eqref{eq:potential}, 
we have for any $x\in {\mathbb R}^d$,
\begin{equation}\label{eq:upper-01}
\begin{split}
&\int_0^1 \left(\int_{{\mathbb R}^d}p_s^{\nu}(x,z)\,|\nu|({\rm d}z)\right)\,{\rm d}s
\leq c_1 \int_0^1 \left(\int_{{\mathbb R}^d}p_{c_2s}(x,z)\,|\nu|({\rm d}z)\right)\,{\rm d}s\\
&\leq c_3 \int_0^{\infty} \left(\int_{{\mathbb R}^d}e^{\lambda s}p_s(x,z)\,|\nu|({\rm d}z)\right)\,{\rm d}s
=c_3G_{-\lambda}|\nu|(x)\leq c_4 h(x).
\end{split}
\end{equation}
Substituting this into \eqref{eq:int-upper-1}, we obtain \eqref{eq:int-upper-1-2}.
\end{proof}

We next discuss the bound of ${\rm (II)}$. 
\begin{lem}\label{lem:ii} 
Under the same setting as in Proposition {\rm \ref{prop:est-int}}, 
the following three assertions hold. 
\begin{itemize}
\item[{\rm (i)}] 
For any $c>0$ with $c\geq -\lambda_2$, there exists $C_1>0$ 
such that for any $t\geq 1$ and $R>2M$, 
\begin{equation*}
\int_1^t \left(\int_{{\mathbb R}^d}e^{-\lambda_2 s}P_z(|B_{t-s}|>R)\,|\nu|({\rm d}z)\right)\,{\rm d}s
\leq  C_1 e^{c t-\sqrt{2c}R}R^{(d-1)/2}.
\end{equation*}
\item[{\rm (ii)}] 
If $\lambda_2=0$, then there exists $C_2>0$
such that for any $t\geq 1$ and $R>M$, 
\begin{equation*}
\int_1^t \left(\int_{{\mathbb R}^d}P_z(|B_{t-s}|>R)\,|\nu|({\rm d}z)\right)\,{\rm d}s
\leq  
C_2 tP_0(|B_t|>R-M).
\end{equation*}

\item[{\rm (iii)}] 
For any $c>0$ with $c\geq -\lambda_2$, there exists $C_3>0$ 
such that for any $x\in {\mathbb R}^d$, $t\geq 1$ and $R>M$, 
\begin{equation}\label{eq:int-upper-2}
|{\rm (II)}|\leq C_3I_c(t,R).
\end{equation}
\end{itemize}
\end{lem}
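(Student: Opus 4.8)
The plan is to estimate the inner integral $\int_{{\mathbb R}^d}P_z(|B_{t-s}|>R)\,|\nu|({\rm d}z)$ uniformly over $z\in\supp[|\nu|]\subset B_0(M)$ and then integrate in $s$. For part (i), I first bound $P_z(|B_{t-s}|>R)\le P_0(|B_{t-s}|>R-M)$ for $z\in B_0(M)$ and $R>2M$, using translation together with $|z|<M$. The key estimate is the Gaussian tail bound: for $u>0$ and $r>0$ one has $P_0(|B_u|>r)\le C r^{(d-2)/2}u^{-(d-2)/2}e^{-r^2/(2u)}$ (the standard asymptotics of the $\chi$-tail, valid uniformly once $r^2/u$ is bounded below; for the complementary range $r^2/u$ bounded one uses that $P_0(|B_u|>r)\le 1$ and absorbs it). Plugging in $u=t-s$, $r=R-M$, one is reduced to estimating
\begin{equation*}
\int_1^t e^{-\lambda_2 s}(t-s)^{-(d-2)/2}(R-M)^{(d-2)/2}e^{-(R-M)^2/(2(t-s))}\,{\rm d}s.
\end{equation*}
The integrand, as a function of $s$, balances the growing factor $e^{-\lambda_2 s}$ against the Gaussian factor $e^{-(R-M)^2/(2(t-s))}$; substituting $w=t-s$ and using $e^{-\lambda_2 s}=e^{-\lambda_2 t}e^{\lambda_2 w}$, the problem becomes a Laplace-type integral $\int_0^{t-1}e^{\lambda_2 w}w^{-(d-2)/2}(R-M)^{(d-2)/2}e^{-(R-M)^2/(2w)}\,{\rm d}w$. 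Since $c\ge -\lambda_2>0$, replacing $e^{\lambda_2 w}$ by $e^{-cw}$ (legitimate because $\lambda_2\le -c\le 0$... careful: $c\ge-\lambda_2$ means $-c\le\lambda_2\le 0$, so $e^{\lambda_2 w}\le e^{-cw}$ fails; rather $\lambda_2\ge -c$ gives $e^{\lambda_2 w}\le 1$ only, so one keeps $c=-\lambda_2$ for the sharp case and estimates the $w$-integral directly) and applying the elementary asymptotic $\int_0^\infty e^{-cw}w^{-(d-2)/2}e^{-\rho^2/(2w)}\,{\rm d}w\asymp \rho^{-(d-2)/2}\rho^{(d-1)/2}e^{-\sqrt{2c}\,\rho}$ as $\rho\to\infty$ — i.e. the saddle-point value of $cw+\rho^2/(2w)$ is $\sqrt{2c}\,\rho$, attained at $w\sim\rho/\sqrt{2c}$ — yields the bound $C e^{-\lambda_2 t}$ ... multiplied by $e^{-\lambda_2 t}$ from before; combining $e^{-\lambda_2 t}\cdot(\text{the }w\text{-integral})$ and rewriting $-\lambda_2=c$ (sharp case) or absorbing when $c>-\lambda_2$ gives exactly $C_1 e^{ct-\sqrt{2c}R}R^{(d-1)/2}$, since $R-M$ and $R$ are comparable.

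For part (ii), when $\lambda_2=0$ the factor $e^{-\lambda_2 s}\equiv 1$, so the double integral is simply $\int_1^t\int_{{\mathbb R}^d}P_z(|B_{t-s}|>R)\,|\nu|({\rm d}z)\,{\rm d}s$. Here I use the cruder monotonicity $P_z(|B_{t-s}|>R)\le P_0(|B_{t-s}|>R-M)\le P_0(|B_t|>R-M)$ (the last step because $s\mapsto P_0(|B_s|>r)$ is increasing in $s$), so the $s$-integral contributes at most $t$ and the $|\nu|$-integral contributes $|\nu|({\mathbb R}^d)<\infty$, giving $C_2 tP_0(|B_t|>R-M)$ directly.

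Part (iii) is the assembly step. Starting from the definition of $({\rm II})$ in \eqref{eq:q_t}, I bound $|p_s^\nu(x,z)-e^{-\lambda s}h(x)h(z)|$ using Lemma \ref{lem:int-eigen}(ii), which gives $|p_s^\nu(x,z)-e^{-\lambda s}h(x)h(z)|\le Ce^{-\lambda_2 s}$ for $s\ge 1$ uniformly in $x,z$; hence $|({\rm II})|\le C\int_1^t\int_{{\mathbb R}^d}e^{-\lambda_2 s}P_z(|B_{t-s}|>R)\,|\nu|({\rm d}z)\,{\rm d}s$. When $\lambda_2<0$, part (i) with $c=-\lambda_2$ gives $|({\rm II})|\le C_1 e^{-\lambda_2 t-\sqrt{-2\lambda_2}R}R^{(d-1)/2}$, which for general $c\ge-\lambda_2$ is dominated by $I_c(t,R)$ after noting $e^{-\lambda_2 t-\sqrt{-2\lambda_2}R}\le e^{ct-\sqrt{2c}R}$ for $R$ large relative to $t$ — actually the monotonicity here needs $c\mapsto ct-\sqrt{2c}R$ and one checks $I_c(t,R)$ is defined precisely so that the bound from part (i) fits; when $\lambda_2=0$ one takes the minimum in \eqref{eq:def-i} and applies part (i) for the exponential piece and part (ii) for the $tP_0(|B_t|>R-M)$ piece, recovering $C_3 I_c(t,R)$ in both regimes.

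The main obstacle I anticipate is the sharp Laplace/saddle-point asymptotic for $\int_0^\infty e^{-cw}w^{-(d-2)/2}e^{-\rho^2/(2w)}\,{\rm d}w$ with the correct power $\rho^{(d-1)/2}$ in front of $e^{-\sqrt{2c}\,\rho}$, and in particular making the estimate uniform in $t$ (the upper limit $t-1$ vs.\ $\infty$) and uniform over the relevant range of $R$ and $t$ — one must be careful that the saddle $w^*=\rho/\sqrt{2c}$ may exceed $t-1$, in which case the integral is controlled by its value at the endpoint $w=t-1$ and one should check this endpoint contribution is still dominated by $e^{ct-\sqrt{2c}R}R^{(d-1)/2}$. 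This uniform Laplace estimate is presumably the content deferred to Appendix \ref{appendix:int}; granting it, the rest is routine.
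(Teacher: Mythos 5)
Your parts (ii) and (iii) follow the paper's argument and are fine, but part (i) has a genuine gap in the treatment of general $c\ge-\lambda_2$, and it propagates into your assembly of (iii). After the substitution $w=t-s$ you correctly notice that $e^{\lambda_2 w}\le e^{-cw}$ is false when $c>-\lambda_2$, but the fallback you propose --- prove the estimate only for the ``sharp case'' $c=-\lambda_2$ and then ``absorb'' it into $e^{ct-\sqrt{2c}R}R^{(d-1)/2}$ --- does not work: the needed inequality $e^{-\lambda_2 t-\sqrt{-2\lambda_2}\,R}\le C\,e^{ct-\sqrt{2c}\,R}$ fails uniformly over the stated range (fix $t=1$ and let $R\to\infty$; the left side decays like $e^{-\sqrt{-2\lambda_2}\,R}$, strictly slower than $e^{-\sqrt{2c}\,R}$ when $c>-\lambda_2$). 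Moreover, when $\lambda_2=0$ your ``sharp case'' would be $c=0$, which is excluded by the hypothesis $c>0$, and the lemma is genuinely needed for $c$ strictly larger than $-\lambda_2$ (the proof of Lemma \ref{lem:exp-bound} chooses such a $c$), so this is not a dispensable generality.

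The repair is a one-line change and is exactly what the paper does: since $s\ge 0$ and $c\ge-\lambda_2$, bound $e^{-\lambda_2 s}\le e^{cs}$ at the outset (equivalently $e^{-\lambda_2(t-w)}\le e^{c(t-w)}$ in your parametrization). Then, extending the $w$-integral to $\infty$, the double integral is at most $|\nu|({\mathbb R}^d)\,e^{ct}\int_0^{\infty}e^{-cw}P_0(|B_w|>R-M)\,{\rm d}w=|\nu|({\mathbb R}^d)\,e^{ct}\int_{|y|>R-M}G_c(0,y)\,{\rm d}y$, and \eqref{eq:resolvent} together with the radial integral gives $C e^{ct-\sqrt{2c}R}R^{(d-1)/2}$ for $R>2M$; no saddle-point analysis is required, and your worries about the truncation at $t-1$ and uniformity in $(t,R)$ disappear, since for an upper bound one simply integrates to $\infty$. (The paper's Appendix \ref{appendix:int} concerns the term ${\rm (III)}$ in Lemma \ref{lem:iii}, not this estimate.) If you prefer your Gaussian-tail route, it also works for every admissible $c$ once the weight is $e^{-cw}$, but note two slips that happen to cancel: the tail bound should read $P_0(|B_u|>r)\le C(r/\sqrt{u})^{d-2}e^{-r^2/(2u)}$ (power $d-2$, not $(d-2)/2$), and the Laplace integral $\int_0^{\infty}e^{-cw}w^{-(d-2)/2}e^{-\rho^2/(2w)}\,{\rm d}w$ is of order $\rho^{-(d-3)/2}e^{-\sqrt{2c}\,\rho}$, not $\rho^{1/2}e^{-\sqrt{2c}\,\rho}$; with the corrected powers the product is again $\rho^{(d-1)/2}e^{-\sqrt{2c}\,\rho}$.
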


\begin{proof}
By Lemma \ref{lem:int-eigen} (ii), there exists $c_1>0$ such that 
for any $x\in {\mathbb R}^d$ and $t\geq 1$,
\begin{equation*}
\begin{split}
|{\rm (II)}|&\leq \int_1^t \left(\int_{{\mathbb R}^d}\left|p_s^{\nu}(x,z)-e^{-\lambda s}h(x)h(z)\right|
P_z(|B_{t-s}|>R)\,|\nu|({\rm d}z)\right)\,{\rm d}s\\
&\leq c_1\int_1^t \left(\int_{{\mathbb R}^d}e^{-\lambda_2 s}P_z(|B_{t-s}|>R)\,|\nu|({\rm d}z)\right)\,{\rm d}s. 
\end{split}
\end{equation*}
Therefore, (iii) follows from (i) and (ii).

Let us show (i) and (ii). 
By the change of variables and \eqref{eq:comp-upper},
\begin{equation}\label{eq:int-spectral}
\begin{split}
&\int_1^t \left(\int_{{\mathbb R}^d}e^{-\lambda_2 s}P_z(|B_{t-s}|>R)\,|\nu|({\rm d}z)\right)\,{\rm d}s\\
&=
\int_0^{t-1} \left(\int_{{\mathbb R}^d}e^{-\lambda_2(t-s)}P_z(|B_s|>R)\,|\nu|({\rm d}z)\right)\,{\rm d}s\\
&\leq |\nu|({\mathbb R}^d)
\int_0^{t-1}e^{-\lambda_2(t-s)}P_0(|B_s|>R-M)\,{\rm d}s.
\end{split}
\end{equation}
Then for any $c>0$ with $c\geq -\lambda_2$, we have by \eqref{eq:resolvent},
\begin{equation*}
\begin{split}
&\int_0^{t-1}e^{-\lambda_2(t-s)}P_0(|B_s|>R-M)\,{\rm d}s
\leq \int_0^{\infty}e^{c(t-s)}P_0(|B_s|>R-M)\,{\rm d}s\\
&=e^{ct}\int_{|y|>R-M}G_c(0,y)\,{\rm d}y
\leq c_2 e^{ct}\int_{|y|>R-M} \frac{e^{-\sqrt{2c}|y|}}{|y|^{(d-1)/2}}\,{\rm d}y
\leq c_3 e^{ct-\sqrt{2c}R}R^{(d-1)/2},
\end{split}
\end{equation*}
which implies (i). 
If $\lambda_2=0$, then \eqref{eq:comp-upper} yields that 
$$
\int_0^{t-1}P_0(|B_s|>R-M)\,{\rm d}s
\leq tP_0(|B_t|>R-M).
$$
Hence (ii) follows by \eqref{eq:int-spectral}.
\end{proof}

We finally give an upper bound of ${\rm (III)}$. 
\begin{lem}\label{lem:iii}
Under the same setting as in Proposition {\rm \ref{prop:est-int}}, 
there exists $C>0$ for any $c\geq -\lambda_2$ 
such that for any $x\in {\mathbb R}^d$, $t\geq 1$ and $R>2M$, 
\begin{equation}\label{eq:int-upper-3}
|{\rm (III)}|\leq Ch(x)\left(P_0(|B_t|>R-M)+J(t,R)\right).
\end{equation}
\end{lem}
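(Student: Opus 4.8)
The first step is to reduce \eqref{eq:int-upper-3} to a one--variable inequality for the heat semigroup. Since $\supp[|\nu|]\subset B_0(M)$, $h$ is bounded, $|\nu|$ is a finite measure, and $P_z(|B_s|>R)\le P_0(|B_s|>R-M)$ for $z\in B_0(M)$ whenever $R>M$ (cf.\ \eqref{eq:comp-upper}), the expression for ${\rm (III)}$ in \eqref{eq:q_t} yields
\begin{equation*}
|{\rm (III)}|\le C\,h(x)\,e^{-\lambda t}\int_{t-1}^{\infty}e^{\lambda s}P_0(|B_s|>R-M)\,{\rm d}s .
\end{equation*}
On $s\in[t-1,t]$ one bounds $e^{\lambda s}\le e^{\lambda(t-1)}$ and $P_0(|B_s|>R-M)\le P_0(|B_t|>R-M)$ (the latter by monotonicity in time), so that portion contributes at most $e^{-\lambda}h(x)P_0(|B_t|>R-M)$, which is already of the form in \eqref{eq:int-upper-3}. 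It remains to bound $e^{-\lambda t}\int_{t}^{\infty}e^{\lambda s}P_0(|B_s|>R-M)\,{\rm d}s$ by $C\bigl(P_0(|B_t|>R-M)+J(t,R)\bigr)$; we may assume $R$ large, since for $R$ bounded both sides of \eqref{eq:int-upper-3} are comparable to $h(x)$ and there is nothing to prove.

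Put $a:=\sqrt{-2\lambda}$. Completing the square gives $e^{\lambda s}p_s(0,y)=(2\pi s)^{-d/2}e^{-a|y|}e^{-(as-|y|)^2/(2s)}$, so that
\begin{equation*}
e^{-\lambda t}\int_{t}^{\infty}e^{\lambda s}P_0(|B_s|>R-M)\,{\rm d}s
= e^{-\lambda t}\int_{|y|>R-M}e^{-a|y|}\Bigl(\int_{t}^{\infty}\frac{e^{-(as-|y|)^2/(2s)}}{(2\pi s)^{d/2}}\,{\rm d}s\Bigr){\rm d}y .
\end{equation*}
I would split the outer integral at $|y|=2at$. On the far part $\{|y|>\max(R-M,2at)\}$, drop the lower endpoint $t$ in the inner integral and use $\int_{0}^{\infty}(2\pi s)^{-d/2}e^{-(as-|y|)^2/(2s)}\,{\rm d}s=e^{a|y|}G_{-\lambda}(0,y)$; then $e^{-\lambda t}\int_{|y|>\max(R-M,2at)}G_{-\lambda}(0,y)\,{\rm d}y$ is controlled by \eqref{eq:resolvent} (cf.\ Lemma \ref{lem:int-eigen}(iv)), and a short case analysis --- according to whether $R-M$ exceeds $2at$, and to the sign of $at-R$ in the Gaussian factor of $J(t,R)$ --- shows it is $\le C(P_0(|B_t|>R-M)+J(t,R))$. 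On the near part $\{R-M<|y|\le 2at\}$, the substitution $v=(as-|y|)/\sqrt{2s}$ turns the inner integral into a constant multiple of $\int_{(at-|y|)/\sqrt{2t}}^{\infty}\frac{s^{(3-d)/2}}{as+|y|}e^{-v^2}\,{\rm d}v$; since $|y|\le 2at$ the Jacobian factor satisfies $s^{(3-d)/2}(as+|y|)^{-1}\le a^{-1}t^{(1-d)/2}$, and after passing to polar coordinates in $y$ and interchanging the $v$- and radial integrations by Fubini, the radial integral has lower endpoint $\max\{R-M,\,at-v\sqrt{2t}\}$. Splitting the $v$-range according to which endpoint is active produces a first piece supported where the endpoint equals $R-M$ --- bounded, via $\int_{R-M}^{\infty}\rho^{d-1}e^{-a\rho}\,{\rm d}\rho\le C(R-M)^{d-1}e^{-a(R-M)}$ and $(R-M)^{d-1}t^{(1-d)/2}\le C(R-M)^{(d-1)/2}$ (which holds because $R-M\le 2at$ on this region), by a multiple of $J(t,R)$ --- and a second piece which, after another completion of the square in the exponent, equals a multiple of $(2t)^{(d-1)/2}e^{-a^2t/2}\int_{(R-M)/\sqrt{2t}}^{\infty}u^{d-1}e^{-u^2}\,{\rm d}u$, i.e.\ of $P_0(|B_t|>R-M)$.

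The argument is elementary once these splittings are fixed; the one genuinely delicate point is the Jacobian factor $s^{(3-d)/2}(as+|y|)^{-1}$, which is constant only for $d=1$, so one cannot simply replace $s$ by $t$ when $|y|\gg at$ --- there $e^{-(as-|y|)^2/(2s)}$ peaks at $s=|y|/a\gg t$, and a naive bound would lose a power of $|y|/t$. Cutting the outer integral at $|y|=2at$ and invoking the full resolvent $G_{-\lambda}$ on the far part is precisely what sidesteps this. The remaining Gaussian- and Gamma-type integral evaluations and the bookkeeping matching the polynomial prefactors with $R^{(d-1)/2}$ are the routine computations I would place in Appendix \ref{appendix:int}.
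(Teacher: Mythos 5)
Your proof is correct, but it follows a genuinely different route from the paper's. The paper, after the same initial reduction to $e^{-\lambda t}\int_{t}^{\infty}e^{\lambda s}P_0(|B_s|>R-M)\,{\rm d}s$, integrates by parts in $s$: the boundary term gives $P_0(|B_t|>R-M)$ directly, and the remaining term is computed by differentiating the Gaussian tail, which collapses the spatial integral into the one-dimensional integral $c\,(R-M)^d\int_t^{\infty}e^{\lambda s-(R-M)^2/(2s)}s^{-(d+2)/2}\,{\rm d}s$; after the substitution $u=\sqrt{s}$ and extraction of $e^{-\sqrt{-2\lambda}(R-M)}$, this is estimated in Appendix \ref{appendix:int} via the substitution $v=\sqrt{-\lambda}u-R/(\sqrt{2}u)$, with the case split $R\gtrless\sqrt{-2\lambda}\,t$ producing exactly the factor $\int_{(\sqrt{-2\lambda}t-R)/\sqrt{2t}}^{\infty}e^{-v^2}\,{\rm d}v$ in $J(t,R)$. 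You instead keep the spatial integral, use the factorization $e^{\lambda s}p_s(0,y)=(2\pi s)^{-d/2}e^{-\sqrt{-2\lambda}|y|}e^{-(\sqrt{-2\lambda}s-|y|)^2/(2s)}$, split space at $|y|=2\sqrt{-2\lambda}\,t$, dispose of the far region by the full resolvent together with \eqref{eq:resolvent} (essentially the computation of Lemma \ref{lem:int-eigen} (iv)), and treat the near region by the same completing-the-square substitution plus a space--time Fubini; the $[t-1,t]$ strip and the second near-piece supply the $P_0(|B_t|>R-M)$ term, the first near-piece and the far region supply $J(t,R)$. I checked the delicate points: the reduction to large $R$ is legitimate (for bounded $R$ both sides are comparable to $h(x)$, since $P_0(|B_t|>R-M)\geq P_0(|B_1|>R-M)$ is bounded below), the Jacobian bound $s^{(3-d)/2}(as+|y|)^{-1}\leq a^{-1}t^{(1-d)/2}$ in fact holds for all $s\geq t$ in every dimension without the restriction $|y|\leq 2at$ (that restriction is genuinely needed only where you use it to convert $(R-M)^{d-1}t^{(1-d)/2}$ into $C(R-M)^{(d-1)/2}$), the exponent bookkeeping $e^{-\lambda t}t^{(1-d)/2}\cdot(2t)^{(d-1)/2}e^{-a^2t/2}=2^{(d-1)/2}$ makes the second near-piece a genuine multiple of $P_0(|B_t|>R-M)$, and the far-region case analysis works because either $at-R\leq 0$, so the Gaussian factor in $J(t,R)$ is bounded below, or $R<at$ forces $t$ large and the extra $e^{-(-\lambda)t}$-type decay crushes all polynomial factors. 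The trade-off: the paper's argument is shorter and purely one-dimensional at the price of the appendix computation; yours avoids the integration by parts and the explicit differentiation, reuses the resolvent asymptotics already established in Lemma \ref{lem:int-eigen}, but pays with a two-variable Fubini and more case analysis. Both yield \eqref{eq:int-upper-3} with the same $J(t,R)$.
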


\begin{proof}
By \eqref{eq:comp-upper}, 
\begin{equation*}
\begin{split}
&\int_{t-1}^{\infty}e^{\lambda s}\left(\int_{{\mathbb R}^d} h(z)P_z(|B_s|>R)\,|\nu|({\rm d}z)\right)\,{\rm d}s\\
&\leq \int_{t-1}^{\infty} e^{\lambda s}P_0(|B_s|>R-M)\,{\rm d}s
\int_{{\mathbb R}^d}h(z)\,|\nu|({\rm d}z)
\leq c_1\int_t^{\infty} e^{\lambda s}P_0(|B_s|>R-M)\,{\rm d}s.
\end{split}
\end{equation*}
Then by the integration by parts formula, 
\begin{equation*}
\begin{split}
&\int_t^{\infty} e^{\lambda s}P_0(|B_s|>R-M)\,{\rm d}s\\
&=\frac{1}{-\lambda}e^{\lambda t} P_0(|B_t|>R-M)
+\frac{1}{-\lambda}\int_t^{\infty}e^{\lambda s}\frac{\partial }{\partial s}P_0(|B_s|>R-M)\,{\rm d}s.
\end{split}
\end{equation*}
Hence for any $x\in {\mathbb R}^d$, $t\geq 1$ and $R>M$,
\begin{equation}\label{eq:upper-iii}
\begin{split}
|{\rm (III)}|
\leq c_2 h(x)\left(P_0(|B_t|>R-M)
+e^{-\lambda t}\int_t^{\infty}e^{\lambda s}\frac{\partial }{\partial s}P_0(|B_s|>R-M)\,{\rm d}s\right).
\end{split}
\end{equation}

By the change of variables,
\begin{equation}
\begin{split}
&\int_t^{\infty}e^{\lambda s}\frac{\partial }{\partial s}P_0(|B_s|>R-M)\,{\rm d}s
=c_3(R-M)^d\int_t^{\infty}e^{\lambda s}e^{-(R-M)^2/(2s)}\frac{1}{s^{(d+2)/2}}\,{\rm d}s\\
&=c_4(R-M)^d \int_{\sqrt{t}}^{\infty}e^{\lambda u^2-(R-M)^2/(2u^2)}\frac{1}{u^{d+1}}\,{\rm d}u\\
&=c_4 e^{-\sqrt{-2\lambda}(R-M)}(R-M)^d\int_{\sqrt{t}}^{\infty}e^{-(\sqrt{-\lambda}u-(R-M)/(\sqrt{2}u))^2}\frac{1}{u^{d+1}}\,{\rm d}u.
\end{split}
\label{eq:deri-upper}
\end{equation}
We then see by \eqref{eq:negative-upper} and \eqref{eq:positive-upper} below that 
for any $t\geq 1$ and $R>2M$,
$$
e^{-\lambda t}\int_t^{\infty}e^{\lambda s}\frac{\partial }{\partial s}P_0(|B_s|>R-M)\,{\rm d}s
\leq c_5J(t,R).
$$
Hence the proof is complete by \eqref{eq:upper-iii}.
\end{proof}

\begin{proof}[Proof of Proposition {\rm \ref{prop:est-int}}] 
(i) is a consequence of Lemmas \ref{lem:i}, \ref{lem:ii} and \ref{lem:iii}.
We now show (ii).  If we take $R=0$ in \eqref{eq:q_t}, then   
\begin{equation*}
\begin{split}
\left| \int _{ {\mathbb R}^d } q_t (x,y) {\rm d} y \right|
&\le \int _0 ^1 \left( \int_{ {\mathbb R}^d } p_s ^{\nu} (x,z) |\nu| ({\rm d}z) \right) {\rm d}s \\
& + \int _1 ^t \left( \int _{ {\mathbb R}^d } \left| p_s ^{\nu} (x,z) - e^{-\lambda s} h(x) h(z) \right|
|\nu| ({\rm d}z) \right) {\rm d} s + c_1h(x)\int_{{\mathbb R}^d}h(z)\,|\nu|({\rm d}z).
\end{split}
\end{equation*}
By Lemma \ref{lem:int-eigen} (ii),
\[
\begin{split}
\int_1 ^t \left( \int _{ {\mathbb R}^d } \left| p_s ^{\nu} (x,z) - e^{-\lambda s} h(x) h(z) \right|
	|\nu| ({\rm d}z) \right) {\rm d} s 
& \le  c_2|\nu|({\mathbb R}^d)\int _1 ^t e^{-\lambda_2 s}\,{\rm d} s 
\le 
c_3 \left( t \vee e^{-\lambda _2 t} \right) .
\end{split}
\]
Combining this with \eqref{eq:upper-01}, we arrive at the conclusion.
\end{proof}

\subsection{Uniform estimates}
In this subsection, we show two lemmas related to the Feynman-Kac semigroups 
provided that the Brownian particle sits outside a ball with time dependent radius. 
Let $\nu^{+}$ and $\nu^{-}$ be Kato class measures with compact support in ${\mathbb R}^d$ 
such that $\lambda<0$. 
Let $a(t)$ be a function on $(0,\infty)$ such that 
$a(t)=o(t)$ as $t\rightarrow\infty$. 
For a fixed  $\delta\in (0, \sqrt{-2\lambda})$, 
we define $R(t)=\delta t+a(t)$ and 
$$\eta(t)=e^{-\lambda t}\int_{|y|>R(t)}h(y)\,{\rm d}y.$$ 
Then by Lemma \ref{lem:int-eigen} (iv),
\begin{equation}\label{eq:asymp-eta}
\eta(t)
\sim c_dR(t)^{(d-1)/2}e^{-\lambda t-\sqrt{-2\lambda}R(t)}
\sim c_d\delta^{(d-1)/2}t^{(d-1)/2}
e^{(-\lambda-\sqrt{-2\lambda}\delta)t-\sqrt{-2\lambda}a(t)} \ (t\rightarrow\infty).
\end{equation}

\begin{lem}\label{lem:exp-bound}
Let $\nu^{+}$ and $\nu^{-}$ be Kato class measures with compact support in ${\mathbb R}^d$ 
such that $\lambda<0$. 
Let $K$ be a compact set in ${\mathbb R}^d$ and $\delta\in (0,\sqrt{-2\lambda})$. 
Then for any $\alpha\in (0, 1-\delta/\sqrt{-2\lambda})$,  
there exist positive constants $C$ and $T$ such that 
for any $x\in K$, $t\geq T$ and $s\in[0,\alpha t]$,
\begin{equation*}
\begin{split}
\left|E_x\left[e^{A_{t-s}^{\nu}};|B_{t-s}|>R(t)\right]-e^{\lambda s}h(x)\eta(t)\right|
&\leq e^{-C t}e^{\lambda s}e^{-\lambda t-\sqrt{-2\lambda}R(t)}.
\end{split}
\end{equation*}
\end{lem}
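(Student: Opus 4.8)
The plan is to reduce the estimate to the decomposition \eqref{eq:q_t-0} and Proposition \ref{prop:est-int}~(i), and then to check that every resulting error term decays in $t$ at an exponential rate strictly faster than that of the main term $e^{\lambda s}h(x)\eta(t)$. Write $\beta=\sqrt{-2\lambda}$ and $\rho=\sup_{x\in K}|x|$. Applying \eqref{eq:q_t-0} with $t$ replaced by $t-s$ and $R$ by $R(t)$, and using $e^{-\lambda(t-s)}\int_{|y|>R(t)}h(y)\,{\rm d}y=e^{\lambda s}\eta(t)$, one obtains the exact identity
\[
E_x\!\left[e^{A_{t-s}^{\nu}};|B_{t-s}|>R(t)\right]-e^{\lambda s}h(x)\eta(t)
=P_x(|B_{t-s}|>R(t))+\int_{|y|>R(t)}q_{t-s}(x,y)\,{\rm d}y .
\]
For $t\geq T$ with $(1-\alpha)T\geq 1$ and $R(t)>2M$ we have $t-s\in[(1-\alpha)t,t]$ and $t-s\geq1$, so Proposition \ref{prop:est-int}~(i) applies with $t-s$ in place of $t$ and bounds the last integral by $C_1\big(h(x)P_0(|B_{t-s}|>R(t)-M)+I_c(t-s,R(t))+h(x)J(t-s,R(t))\big)$. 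Here $c$ is a spectral parameter that I reserve to choose: I take $0<c<-\lambda$ with $c\geq-\lambda_2$, chosen close enough to $-\lambda$ that $\delta<\tfrac12(\beta+\sqrt{2c})(1-\alpha)$; this is possible since $\delta<\beta(1-\alpha)$ by the hypothesis $\alpha<1-\delta/\sqrt{-2\lambda}$, $\sqrt{2c}\uparrow\beta$ as $c\uparrow-\lambda$, and $-\lambda_2<-\lambda$.

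The core of the argument is one elementary inequality: there exist $c_2>0$ and $T$ such that, for all $t\geq T$ and $s\in[0,\alpha t]$,
\[
\frac{R(t)^2}{2(t-s)}-\beta R(t)+(-\lambda)(t-s)\geq c_2 t .
\]
Indeed, for fixed $t$ the left side, as a function of $u=t-s$ on $[(1-\alpha)t,t]$, equals $\tfrac{R(t)^2}{2u}-\beta R(t)+(-\lambda)u$, which is convex with minimiser $u^{\ast}=R(t)/\beta$; since $\delta<\beta(1-\alpha)$ and $a(t)=o(t)$, one has $u^{\ast}<(1-\alpha)t$ for $t$ large, so the minimum over the admissible range is attained at $u=(1-\alpha)t$ and equals $t\big(\tfrac{\delta^2}{2(1-\alpha)}-\beta\delta+\tfrac{\beta^2}{2}(1-\alpha)\big)(1+o(1))$, whose bracket is strictly positive because $\tfrac{\delta^2}{2(1-\alpha)}+\tfrac{\beta^2}{2}(1-\alpha)\geq\beta\delta$ by AM--GM, with equality only when $\delta=\beta(1-\alpha)$. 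Combining this with the standard Gaussian tail bound $P_0(|B_u|>r)\leq c_0\big(1+(r/\sqrt u)^{d}\big)e^{-r^2/(2u)}$, with $P_x(|B_{t-s}|>R(t))\leq P_0(|B_{t-s}|>R(t)-\rho)$, and with the algebraic identity $\tfrac{(\beta(t-s)-R(t))^2}{2(t-s)}=(-\lambda)(t-s)-\beta R(t)+\tfrac{R(t)^2}{2(t-s)}$ together with $\int_A^{\infty}e^{-v^2}\,{\rm d}v\leq\tfrac1{2A}e^{-A^2}$ (where $\beta(t-s)-R(t)\geq c_1 t$ for $t$ large, again by $\delta<\beta(1-\alpha)$), one shows that $P_x(|B_{t-s}|>R(t))$, $h(x)P_0(|B_{t-s}|>R(t)-M)$ and $h(x)J(t-s,R(t))$ are each at most a power of $t$ times $e^{-R(t)^2/(2(t-s))}$, hence at most $e^{-(c_2/2)t}e^{\lambda s}e^{-\lambda t-\beta R(t)}$ for $t$ large (using $\lambda s\leq0$ and the displayed inequality, which swallows the polynomial factor). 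For the remaining term, $I_c(t-s,R(t))\leq e^{c(t-s)-\sqrt{2c}R(t)}R(t)^{(d-1)/2}$ in both branches of its definition; writing $c+\lambda=-\tfrac12(\beta-\sqrt{2c})(\beta+\sqrt{2c})$ and using $R(t)<\tfrac12(\beta+\sqrt{2c})(1-\alpha)t\leq\tfrac12(\beta+\sqrt{2c})(t-s)$ with room to spare for $t$ large (the point of the choice of $c$), one gets $I_c(t-s,R(t))\leq e^{-c_3 t}e^{\lambda s}e^{-\lambda t-\beta R(t)}$ for some $c_3>0$. Adding the four bounds and absorbing all constants into a marginally smaller exponential rate yields the claim with $C=\tfrac12\min(c_2/2,c_3)$ and $T$ enlarged.

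I expect the main obstacle to be the uniformity in $s$ over $[0,\alpha t]$ in the displayed core inequality: the bound is tightest at the right endpoint $s=\alpha t$, where the Gaussian exponent $R(t)^2/(2(t-s))$ is largest while the eigenvalue gain $(-\lambda)(t-s)$ is smallest, and it is precisely there that the restriction $\alpha<1-\delta/\sqrt{-2\lambda}$ is forced -- it is the AM--GM equality case. Once this trade-off is located, the Gaussian tail asymptotics, the reduction of $J$, and the selection of the spectral constant $c$ are all routine.
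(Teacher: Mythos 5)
Your proposal is correct and follows essentially the same route as the paper: the identity \eqref{eq:q_t-0} applied at time $t-s$, Proposition \ref{prop:est-int}~(i) for the $q$-term, and then exponential bounds on $P_x(|B_{t-s}|>R(t))$, $I_c(t-s,R(t))$ and $J(t-s,R(t))$ that hinge on $\delta<\sqrt{-2\lambda}\,(1-\alpha)$. The only (harmless) differences are cosmetic: you locate the worst case $s=\alpha t$ via convexity in $u=t-s$ and AM--GM where the paper uses monotonicity of $f(s)=-\lambda s-(R(t)-M)^2/(2(t-s))$ and a limit computation, your selection rule for the spectral constant $c$ ($\delta<\tfrac12(\beta+\sqrt{2c})(1-\alpha)$) is equivalent to the paper's explicit choice, and your treatment of $J$ via the identity $(\beta(t-s)-R(t))^2/(2(t-s))=(-\lambda)(t-s)-\beta R(t)+R(t)^2/(2(t-s))$ is a slight variant of the paper's bound $(\sqrt{-2\lambda}(t-s)-R(t))/\sqrt{2(t-s)}\geq c\sqrt{t}$.
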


\begin{proof}
Let $\omega_d$ be the area of the unit ball in ${\mathbb R}^d$ 
and let $M>0$ satisfy $K\cup \supp[|\nu|]\subset B_0(M)$. 
Then for any $x\in K$, large $t$ and $s\in [0,\alpha t]$, 
\begin{equation}\label{eq:upper-bm-0}
\begin{split}
P_x(|B_{t-s}|>R(t))\leq P_0(|B_{t-s}|>R(t)-M)
&=\frac{\omega_d}{(2\pi)^{d/2}}\int_{(R(t)-M)/\sqrt{t-s}}^{\infty}e^{-u^2/2}u^{d-1}\,{\rm d}u\\
&\leq c_1 \exp\left( -\frac{(R(t)-M)^2}{2(t-s)}\right)t^{(d-2)/2}
\end{split}
\end{equation}
because 
$$\frac{R(t)-M}{\sqrt{t-s}}
\geq \frac{R(t)-M}{\sqrt{t}}\rightarrow\infty \ (t\rightarrow\infty)$$
and 
\begin{equation}\label{eq:asymp-r-2}
\int_R^{\infty}e^{-u^2/2}u^{d-1}\,{\rm d}u\sim e^{-R^2/2}R^{d-2} \ (R\rightarrow\infty).
\end{equation}
Since the function 
\begin{equation*}
f(s)=-\lambda s-\frac{(R(t)-M)^2}{2(t-s)} \ (s\in [0,\alpha t])
\end{equation*} is increasing 
for all sufficiently large $t$, we have $f(s)\leq f(\alpha t)$ and thus
\begin{equation}\label{eq:ineq-exp-1}
\exp\left(-\lambda s-\frac{(R(t)-M)^2}{2(t-s)}\right)
\leq \exp\left(-\lambda \alpha t-\frac{(R(t)-M)^2}{2(1-\alpha)t}\right).
\end{equation}
We can further take $c_2>0$ so that 
\begin{equation}\label{eq:ineq-exp-2}
\exp\left(-\lambda \alpha t-\frac{(R(t)-M)^2}{2(1-\alpha)t}\right)
\leq e^{-c_2t}e^{-\lambda t-\sqrt{-2\lambda}R(t)}
\end{equation}
because 
\begin{equation*}
\begin{split}
&\left[- \lambda t - \sqrt{-2 \lambda} R(t)
-\left\{-\lambda \alpha t-\frac{(R(t)-M)^2}{2(1-\alpha)t}\right\}\right]/t\\
&=\left( \sqrt{-\lambda(1-\alpha)} - \frac{R(t) -M}{\sqrt{2(1-\alpha) }t} \right)^2 - 
\frac{\sqrt{-2\lambda} M}{t}
\to \frac{-\lambda}{1-\alpha}\left(1-\frac{\delta}{\sqrt{-2\lambda}}-\alpha\right)^2 \ (t \to \infty)
\end{split}
\end{equation*}
and the limit is positive. 
Hence by \eqref{eq:upper-bm-0}, \eqref{eq:ineq-exp-1} and \eqref{eq:ineq-exp-2}, 
we get 
\begin{equation}\label{eq:upper-bm}
\begin{split}
P_x(|B_{t-s}|>R(t))\leq e^{-c_3 t}e^{\lambda s}e^{-\lambda t-\sqrt{-2\lambda}R(t)}.
\end{split}
\end{equation}

Fix $c>0$ so that 
$$\left\{(-\lambda_2)\vee \left(\frac{\sqrt{2}\delta}{1-\alpha}-\sqrt{-\lambda}\right)^2\right\}
<c<-\lambda.$$
Then for any large $t$ and $s\in [0,\alpha t]$, 
we obtain by \eqref{eq:def-i} and direct calculation,
\begin{equation}\label{eq:upper-i}
I_c(t-s, R(t))\leq e^{c(t-s)-\sqrt{2c}R(t)}R(t)^{(d-1)/2}
\leq c_4e^{-c_5 t}e^{\lambda s}e^{-\lambda t-\sqrt{-2\lambda}R(t)}.
\end{equation}

For $t\geq 1$ and  $s\in [0,\alpha t]$, 
since $\alpha\in (0,1-\delta/\sqrt{-2\lambda})$, we get 
$$\frac{\sqrt{-2\lambda} (t-s)-R(t)}{\sqrt{2(t-s)}}
\geq \frac{\{\sqrt{-2\lambda}(1-\alpha)-\delta\}t-a(t)}{\sqrt{2t}}
\geq c_1\sqrt{t}.$$
Then by the relation 
$$\int_R^{\infty}e^{-v^2}\,{\rm d}v \sim \frac{e^{-R^2}}{2R} \ (R\rightarrow\infty),$$ 
we have for any large $t$ and $s\in [0,\alpha t]$,
$$
\int_{(\sqrt{-2\lambda} (t-s)-R(t))/\sqrt{2(t-s)}}^{\infty} e^{-v^2}\,{\rm d}v
\leq \int_{c_1\sqrt{t}}^{\infty} e^{-v^2}\,{\rm d}v\leq c_2e^{-c_3 t}/\sqrt{t},
$$
and thus by \eqref{eq:def-j},
\begin{equation}\label{eq:upper-j}
\begin{split}
J(t-s,R(t))
&=
e^{-\lambda(t-s)}e^{-\sqrt{-2\lambda}R(t)}R(t)^{(d-1)/2}
\int_{(\sqrt{-2\lambda} (t-s)-R(t))/\sqrt{2(t-s)}}^{\infty} e^{-v^2}\,{\rm d}v\\
&\leq e^{-c_4 t}e^{\lambda s}e^{-\lambda t-\sqrt{-2\lambda}R(t)}.
\end{split}
\end{equation}

By Proposition \ref{prop:est-int} with \eqref{eq:upper-bm}, \eqref{eq:upper-i} and \eqref{eq:upper-j}, 
there exists $T>0$ such that for any $x\in K$, $t\geq T$ and $s\in [0,\alpha t]$, 
\begin{equation*}
\begin{split}
&\left|\int_{|y|>R(t)}q_{t-s}(x,y)\,{\rm d}y\right|\\
&\leq c_5\left(h(x) P_0(|B_{t-s}|>R(t)-M)+I_c(t-s,R(t))+h(x)J(t-s,R(t))\right)\\
&\leq c_6e^{-c_7t}e^{\lambda s}e^{-\lambda t-\sqrt{-2\lambda}R(t)}.
\end{split}
\end{equation*}
We therefore obtain by \eqref{eq:q_t-0} and \eqref{eq:upper-bm},
\begin{equation}\label{eq:int-upper-conclusion-0}
\begin{split}
\left|E_x\left[e^{A_{t-s}^{\nu}};|B_{t-s}|>R(t)\right]-e^{\lambda s}h(x)\eta(t)\right|
&\leq P_x(|B_{t-s}|> R(t))+\left|\int_{|y|>R(t)}q_{t-s}(x,y)\,{\rm d}y\right|\\
&\leq c_8e^{-c_9 t}e^{\lambda s}e^{-\lambda t-\sqrt{-2\lambda}R(t)},
\end{split}
\end{equation}
which completes the proof. 
\end{proof}

Let $K$ be a compact set in ${\mathbb R}^d$. 
Then for any $\delta\in (0, \sqrt{-2\lambda})$ and 
$\alpha\in (0,1-\delta/\sqrt{-2\lambda})$, 
Lemma \ref{lem:exp-bound} and \eqref{eq:asymp-eta} imply 
that for any $x\in K$, large $t$ and $s\in [0,\alpha t]$,
\begin{equation}\label{eq:both}
E_x\left[e^{A_{t-s}^{\nu}};|B_{t-s}|>R(t)\right]
=e^{\lambda s}h(x)\eta(t)(1+\theta_{s,x}(t)),
\end{equation}
where $\theta_{s,x}(t)$ is a function on $(0,\infty)$ such that 
$|\theta_{s,x}(t)|\leq c_1e^{-c_2 t}$. 
In particular, if we take $s=0$ in \eqref{eq:both}, then 
\begin{equation}\label{eq:both-1}
E_x\left[e^{A_t^{\nu}};|B_t|>R(t)\right]
=h(x)\eta(t)(1+\theta_{0,x}(t)).
\end{equation}
This equality refines \cite[Proposition 3.1]{S18}  
and justifies an observation in \cite[(3.3)]{S18}. 
Note that $\nu$ in \eqref{eq:both-1} is allowed to be signed.  

\begin{lem}\label{lem:second-moment}
Let $\nu^{+}$ and $\nu^{-}$ be as in Lemma {\rm \ref{lem:exp-bound}}.
Let $K$ be a compact set in ${\mathbb R}^d$ 
and $\mu$ a Kato class measure with compact support in ${\mathbb R}^d$. 
Then there exist $C>0$ and $T\geq 1$ 
such that for any $x\in K$, $t\geq T$ and $s\in [0,t-1]$,
\begin{equation*}
E_x\left[\int_0^{t-s}e^{A_u^{\nu}}
E_{B_u}\left[e^{A_{t-s-u}^{\nu}};|B_{t-s-u}|>R(t)\right]^2\,{\rm d}A_u^{\mu}\right]
\leq C\left(e^{\lambda s}\eta(t)\right)^2.
\end{equation*}
\end{lem}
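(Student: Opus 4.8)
The plan is to split the time integral according to the remaining time $\rho:=t-s-u$ and to treat the three ranges $\rho\ge(1-\alpha)t$, $1\le\rho<(1-\alpha)t$ and $0\le\rho<1$ by, respectively, the sharp asymptotics \eqref{eq:both}, Proposition~\ref{prop:est-int}(i), and a crude Gaussian bound. Here $\alpha$ is fixed in the nonempty interval $\big((1-\delta/\sqrt{-\lambda})\vee0,\ 1-\delta/\sqrt{-2\lambda}\big)$, and $K$ is enlarged so that $K\cup\supp|\nu|\cup\supp\mu\subset B_0(M)$. Two facts will be used repeatedly. First, Remark~\ref{rem:uniform} with $f\equiv1$ (for $\rho\ge1$) together with Khasminskii's bound (for $\rho\le1$) gives a constant $C_0$ with $E_z[e^{A_\rho^\nu}]\le C_0e^{-\lambda\rho}$ for all $z\in{\mathbb R}^d$ and $\rho\ge0$; in particular, by Lemma~\ref{lem:int-eigen}~(ii), $\int p_\rho^\nu(x,z)\,\mu({\rm d}z)\le Ce^{-\lambda\rho}$ for $\rho\ge1$, while for $0\le\rho\le1$ this integral is dominated by $c_1\int p_{c_2\rho}(x,z)\,\mu({\rm d}z)$, which is integrable on $[0,1]$ uniformly in $x\in K$. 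Second, exactly as in \eqref{eq:mtg-moment-2} (compact support and Kato property of $\mu$), one has $\sup_{x\in K}E_x\big[\int_0^\infty e^{2\lambda u+A_u^\nu}h(B_u)^2\,{\rm d}A_u^\mu\big]<\infty$ and $\sup_{x\in K}E_x\big[\int_0^\infty e^{2\lambda u+A_u^\nu}\,{\rm d}A_u^\mu\big]<\infty$, the finiteness following from the first fact and $\int_1^\infty e^{\lambda u}\,{\rm d}u<\infty$.

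For $\rho\ge(1-\alpha)t$ (nonempty only when $s\le\alpha t$) one has $s+u\le\alpha t$, so \eqref{eq:both} with starting point $B_u\in K$ and shift $s+u$ gives $E_{B_u}[e^{A_{t-s-u}^\nu};|B_{t-s-u}|>R(t)]^2=e^{2\lambda(s+u)}h(B_u)^2\eta(t)^2(1+\theta)^2$ with $|\theta|\le c_1e^{-c_2t}$; factoring out $e^{2\lambda s}\eta(t)^2$ and using the first supremum above, the contribution of this range is $\le C(e^{\lambda s}\eta(t))^2$. For $0\le\rho<1$, Cauchy--Schwarz and the Gaussian tail give $E_{B_u}[e^{A_\rho^\nu};|B_\rho|>R(t)]^2\le Ce^{-(R(t)-M)^2/2}\le Ce^{-c_0R(t)^2}$ uniformly, so this range contributes at most $Ce^{-c_0R(t)^2}E_x\big[\int_{(t-s-1)^+}^{t-s}e^{A_u^\nu}{\rm d}A_u^\mu\big]\le Ce^{-c_0R(t)^2-\lambda(t-s)}$, which is $o\big((e^{\lambda s}\eta(t))^2\big)$ uniformly in $s\in[0,t-1]$ since $R(t)\asymp t$.

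The substantial range is $1\le\rho<(1-\alpha)t$. On $\supp{\rm d}A^\mu$ we have $B_u\in B_0(M)$, so combining \eqref{eq:q_t-0} (with $e^{-\lambda\rho}\int_{|y|>R(t)}h=e^{\lambda(t-\rho)}\eta(t)$), Proposition~\ref{prop:est-int}(i) with the choice $c=-\lambda$ (admissible since $-\lambda>0\vee(-\lambda_2)$), and $P_z(|B_\rho|>R(t))\le P_0(|B_\rho|>R(t)-M)$, $h(z)\le\|h\|_\infty$ yields
\[
E_{B_u}[e^{A_\rho^\nu};|B_\rho|>R(t)]^2\le C\Big(P_0(|B_\rho|>R(t)-M)^2+h(B_u)^2e^{2\lambda(s+u)}\eta(t)^2+I_{-\lambda}(\rho,R(t))^2+J(\rho,R(t))^2\Big).
\]
The $\eta(t)^2$–term is handled exactly as in the first range. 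For the last two terms one uses $I_{-\lambda}(\rho,R(t))\le e^{-\lambda\rho-\sqrt{-2\lambda}R(t)}R(t)^{(d-1)/2}$ and $J(\rho,R(t))\le Ce^{-\lambda\rho-\sqrt{-2\lambda}R(t)}R(t)^{(d-1)/2}$ (the error integral in \eqref{eq:def-j} being $O(1)$), then writes $e^{-2\lambda\rho}=e^{-2\lambda(t-s)}e^{2\lambda u}$ and applies the second supremum; since $\eta(t)^2\asymp R(t)^{d-1}e^{-2\lambda t-2\sqrt{-2\lambda}R(t)}$, each of these contributes $\le C(e^{\lambda s}\eta(t))^2$ — the choice $c=-\lambda$ is precisely what makes the $I_{-\lambda}$–term come out with the correct exponential rate $e^{-2\sqrt{-2\lambda}R(t)}$.

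The hard part is the remaining $P_0^2$–term. Since its $\rho$–dependence is not exponential, I would use the Revuz-type identity $E_x[\int_0^{t-s}e^{A_u^\nu}\varphi(u)\,{\rm d}A_u^\mu]=\int_0^{t-s}\varphi(u)\big(\int p_u^\nu(x,z)\mu({\rm d}z)\big)\,{\rm d}u$ together with $\int p_{t-s-\rho}^\nu(x,z)\mu({\rm d}z)\le Ce^{-\lambda(t-s-\rho)}$ (valid as $t-s-\rho\ge1$) to bound its contribution by $Ce^{-\lambda(t-s)}\int_1^{\rho_{\max}}e^{\lambda\rho}P_0(|B_\rho|>R(t)-M)^2\,{\rm d}\rho$ with $\rho_{\max}=\min\{t-s,(1-\alpha)t\}$; by the choice of $\alpha$ one has $\rho_{\max}<(R(t)-M)/\sqrt{-\lambda}$, so the integrand is increasing on $[1,\rho_{\max}]$ and the integral is at most $Ct\,e^{\lambda\rho_{\max}-(R(t)-M)^2/\rho_{\max}}$ times a polynomial in $t$. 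Here the Gaussian cost of reaching the moving boundary $R(t)\sim\delta t$ in the ``sub-ballistic'' time $\rho_{\max}\le(1-\alpha)t$ only just compensates the eigenvalue factor $e^{-\lambda\rho}$ accumulated along the way, so the estimate has no slack: splitting into $\rho_{\max}=(1-\alpha)t$ ($s\le\alpha t$) and $\rho_{\max}=t-s$ ($s>\alpha t$), the required domination by $(e^{\lambda s}\eta(t))^2$ reduces to the elementary inequalities $2\lambda w^2+2\sqrt{-2\lambda}\delta\,w-\delta^2=2\lambda(w-\delta/\sqrt{-2\lambda})^2\le0$ (with $w=(t-s)/t$, valid for every $w$) and its analogue at $w=1-\alpha$ ($\le0$ since $1-\alpha\ge\delta/\sqrt{-2\lambda}$); at the critical value $w=\delta/\sqrt{-2\lambda}$ the two exponential rates coincide, which is why $\alpha$ must be taken close to $1-\delta/\sqrt{-2\lambda}$ and the powers of $R(t)$ tracked carefully to see that the polynomial prefactors agree up to a constant. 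Adding the three ranges then gives the asserted bound for all $x\in K$, $t\ge T$, $s\in[0,t-1]$.
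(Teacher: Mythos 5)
Your decomposition and your treatment of the two easy ranges ($\rho\ge(1-\alpha)t$ via \eqref{eq:both}, and $\rho<1$ via Cauchy--Schwarz and the Gaussian tail) are sound, but the part you yourself call the hard part rests on a misconception and is also precisely where the write-up stops being a proof. The term $P_0(|B_\rho|>R(t)-M)$ does admit a uniform exponential-in-$\rho$ bound: completing the square, $\frac{(R(t)-M)^2}{2\rho}+(-\lambda)\rho\ge\sqrt{-2\lambda}\,(R(t)-M)$, hence $P_0(|B_\rho|>R(t)-M)\le c\,e^{-\lambda\rho-\sqrt{-2\lambda}R(t)}\,t^{(d-2)/2}$ for all $\rho\in[1,t]$; this is exactly \eqref{eq:int-upper-s} in the paper. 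With it the $P_0^2$-term is handled in the same line as your $I_{-\lambda}^2$ and $J^2$ terms: write $e^{-2\lambda\rho}=e^{-2\lambda(t-s)}e^{2\lambda u}$ and apply \eqref{eq:gaugeable}. This collapses the whole argument to the paper's proof, which splits only at $u=t-s-1$, bounds the inner expectation for $B_u\in\supp[\mu]$ and remaining time at least $1$ by $c\,e^{\lambda(s+u)}\eta(t)$ using \eqref{eq:q_t-0}, \eqref{eq:int-upper-s} and Proposition \ref{prop:est-int} (i) with $c=-\lambda$, and then integrates with \eqref{eq:gaugeable}; no three-way split, no use of \eqref{eq:both}, no Revuz identity and no Laplace-type analysis of $\rho\mapsto e^{\lambda\rho}P_0(\cdots)^2$ are needed.

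As written, your middle-range argument has two concrete gaps. First, the Revuz-identity step uses $\int p^{\nu}_{t-s-\rho}(x,z)\,\mu({\rm d}z)\le Ce^{-\lambda(t-s-\rho)}$ only ``valid as $t-s-\rho\ge1$'', but in that range $t-s-\rho$ runs down to $0$ (take $s>\alpha t$ and $\rho$ near $\rho_{\max}=t-s$); the small-time piece must be treated with the Kato-class integrability you record in your first fact, and you never do so. Second, the decisive final comparison is asserted rather than carried out: since, as you note, there is no exponential slack at the critical time $\rho\approx R(t)/\sqrt{-2\lambda}$ (which does lie in $[1,(1-\alpha)t]$ because $\alpha<1-\delta/\sqrt{-2\lambda}$), the polynomial prefactors must match exactly, and they do so only if you retain the full Gaussian-tail factor $\bigl((R(t)-M)/\sqrt{\rho}\bigr)^{d-2}$ from \eqref{eq:asymp-r-2} --- which is a gain for $d=1,2$. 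With the cruder bound $P_0\le e^{-(R(t)-M)^2/(2\rho)}$ your endpoint evaluation loses a factor of order $t$ for $d=1$ and $\sqrt{t}$ for $d=2$, so the bookkeeping you defer is exactly the step that can fail; the claimed monotonicity of $e^{\lambda\rho}P_0(|B_\rho|>R(t)-M)^2$ likewise needs the tail asymptotics, not just the exponent. The plan can be repaired along these lines, but as it stands the crucial estimate is not proved, and the one-line square completion above makes the entire detour unnecessary.
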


\begin{proof}
Let
\begin{equation*}
\begin{split}
&E_x\left[\int_0^{t-s}e^{A_u^{\nu}}E_{B_u}\left[e^{A_{t-s-u}^{\nu}};|B_{t-s-u}|>R(t)\right]^2
\,{\rm d}A_u^{\mu}\right]\\
&=E_x\left[\int_0^{t-s-1}e^{A_u^{\nu}}E_{B_u}\left[e^{A_{t-s-u}^{\nu}};|B_{t-s-u}|>R(t)\right]^2\,{\rm d}A_u^{\mu}\right]\\
&+E_x\left[\int_{t-s-1}^{t-s}e^{A_u^{\nu}}E_{B_u}\left[e^{A_{t-s-u}^{\nu}};|B_{t-s-u}|>R(t)\right]^2\,{\rm d}A_u^{\mu}\right]
={\rm (IV)}+{\rm (V)}.
\end{split}
\end{equation*}
We first discuss the upper bound of ${\rm (IV)}$. 
Let $M>0$ satisfy $K\cup\supp[|\nu|]\cup\supp[\mu]\subset B_0(M)$. 
Then for any large $t\geq 1$, $s\in [0,t-1]$ and $x\in B_0(M)$, 
since a calculation similar to \eqref{eq:upper-bm} implies that 
\begin{equation}\label{eq:int-upper-s}
P_x(|B_{t-s}|>R(t))\leq P_0(|B_{t-s}|>R(t)-M)
\leq c_1e^{-\lambda (t-s)}e^{-\sqrt{-2\lambda}R(t)}t^{(d-2)/2},
\end{equation}
it follows by \eqref{eq:def-i}, \eqref{eq:def-j}, 
Proposition \ref{prop:est-int} and \eqref{eq:asymp-eta} that 
\begin{equation*}
\begin{split}
&\left|\int_{|y|>R(t)}q_{t-s}(x,y)\,{\rm d}y\right|\\
&\leq c_2\left(h(x) P_0(|B_{t-s}|>R(t)-M)+I_{-\lambda}(t-s,R(t))+h(x)J(t-s,R(t))\right)\\
&\leq c_3e^{-\lambda (t-s)}e^{-\sqrt{-2\lambda}R(t)}t^{(d-1)/2}
\leq c_4e^{\lambda s}\eta(t).
\end{split}
\end{equation*}
Combining this with  \eqref{eq:q_t-0} and  \eqref{eq:int-upper-s},
we have for any $x\in \supp[\mu]$, large $t\geq 1$ and $s\in [0,t-1]$,
\begin{equation*}
\begin{split}
E_x\left[e^{A_{t-s}^{\nu}};|B_{t-s}|>R(t)\right]
&\leq P_x(|B_{t-s}|>R(t))
+e^{\lambda s}h(x)\eta(t)
+\left|\int_{|y|>R(t)}q_{t-s}(x,y)\,{\rm d}y\right|\\
&\leq c_5e^{\lambda s}\eta(t).
\end{split}
\end{equation*}
By the same argument as in \cite[Proposition 3.3 (i)]{CS07}, 
we also have
\begin{equation}\label{eq:gaugeable}
\sup_{x\in {\mathbb R}^d}E_x\left[\int_0^{\infty}e^{2\lambda u+A_u^{\nu}}\,{\rm d}A_u^{\mu}\right]<\infty.
\end{equation}
Therefore, for any $x\in K$, large $t\geq 1$ and $s\in[0,t-1]$, 
\begin{equation}\label{eq:second-moment-1}
\begin{split}
{\rm (IV)}
\leq c_6 E_x\left[\int_0^{t-s-1}e^{A_u^{\nu}}
\left(e^{\lambda(s+u)}\eta(t)\right)^2\,{\rm d}A_u^{\mu}\right]
&=c_6 \left(e^{\lambda s}\eta(t)\right)^2
E_x\left[\int_0^{t-s-1}e^{2\lambda u+A_u^{\nu}}\,{\rm d}A_u^{\mu}\right]\\
&\leq c_7\left(e^{\lambda s}\eta(t)\right)^2.
\end{split}
\end{equation}

We next discuss the upper bound of ${\rm (V)}$. 
By Lemma \ref{lem:int-eigen} (i), 
\eqref{eq:comp-upper} and \eqref{eq:upper-01}, 
we have for any $x\in B_0(M)$, $t\in [0,1]$ and $R>M$,
\begin{equation*}
\begin{split}
E_x\left[e^{A_t^{\nu}};|B_t|>R\right]
&=P_x(|B_t|>R)+\int_0^t \left[\int_{{\mathbb R}^d}p_u^{\nu}(x,y)P_y(|B_{t-u}|>R)\,\nu({\rm d}y)\right]\,{\rm d}u\\
&\leq P_0(|B_t|>R-M)\left[1+c_1\int_0^1\left(\int_{{\mathbb R}^d}p_u^{\nu}(x,y)\,|\nu|({\rm d}y)\right)\,{\rm d}u\right]\\
&\leq c_2 P_0(|B_1|>R-M).
\end{split}
\end{equation*}
Then by \eqref{eq:asymp-r-2}, we see that for any $x\in \supp[\mu]$, large $t\geq 1$ and $u\in [t-1,t]$, 
\begin{equation*}
E_x\left[e^{A_{t-u}^{\nu}};|B_{t-u}|>R(t)\right]
\leq c_2 P_0(|B_1|>R(t)-M)\leq c_3 e^{-(R(t)-M)^2/2}R(t)^{d-2}.
\end{equation*}
Hence for any $x\in {\mathbb R}^d$, large $t\geq 1$ and $s\in [0, t-1]$, 
\begin{equation}\label{eq:(ii)-1}
{\rm (V)}
\leq c_4 E_x\left[\int_{t-s-1}^{t-s}e^{A_u^{\nu}}\,{\rm d}A_u^{\mu}\right]e^{-(R(t)-M)^2}R(t)^{2(d-2)}
\leq c_4 E_x\left[e^{A_t^{\nu^{+}}}A_t^{\mu}\right]e^{-(R(t)-M)^2}R(t)^{2(d-2)}.
\end{equation}
By \cite[p.73, Corollary to Proposition 3.8]{CZ95}, 
we have for any $x\in {\mathbb R}^d$ and large $t$,
\begin{equation*}
E_x\left[e^{A_t^{\nu^{+}}}A_t^{\mu}\right]
\leq E_x\left[e^{2A_t^{\nu^{+}}}\right]^{1/2}E_x\left[(A_t^{\mu})^2\right]^{1/2}
\leq e^{c_5 t}.
\end{equation*}
Then \eqref{eq:asymp-eta} and \eqref{eq:(ii)-1} yield that 
for any $x\in K$, large $t\geq 1$ and $s\in[0,t-1]$, 
$${\rm (V)}\leq c_4 e^{c_5 t}e^{-(R(t)-M)^2}R(t)^{2(d-2)}
\leq c_6 \left(e^{\lambda s}\eta(t)\right)^2.$$
Combining this with \eqref{eq:second-moment-1}, 
we arrive at the desired conclusion.
\end{proof}

\subsection{Uniform estimate for the critical case}
Recall from \eqref{eq:rad-br} that for $\kappa\in {\mathbb R}$, 
\begin{equation*}
R_1^{(\kappa)}(t)=\sqrt{\frac{-\lambda}{2}}t+\frac{d-1}{2\sqrt{-2\lambda}}\log t+\kappa.
\end{equation*}
In this subsection, we clarify how $\kappa$ depends on  
the inequalities in Lemmas \ref{lem:exp-bound} and \ref{lem:second-moment}.
Here we note that if $R(t)=R_1^{(\kappa)}(t)$, then \eqref{eq:asymp-eta} becomes  
\begin{equation}\label{eq:asymp-eta-1}
\eta(t)\sim c_*e^{-\sqrt{-2\lambda}\kappa} \ (t\rightarrow\infty)
\end{equation}
for 
\begin{equation}\label{eq:const-1}
c_*=c_d\left(\sqrt{\frac{-\lambda}{2}}\right)^{(d-1)/2}
=\frac{(-\lambda)^{(d-3)/2}}{2(2\pi)^{(d-1)/2}}
\int_{{\mathbb R}^d}\left(\int_{S^{d-1}}e^{\sqrt{-2\lambda}\langle \theta,z\rangle}\,{\rm d}\theta\right)
h(z)\,\nu({\rm d}z).
\end{equation}

\begin{lem}\label{lem:uniform-kappa-1}
Let $\nu^{+}$ and $\nu^{-}$ be Kato class measures with compact support in ${\mathbb R}^d$ 
such that $\lambda<0$. 
Then for any compact set $K$ in ${\mathbb R}^d$ and $\kappa>0$,
\begin{equation*}
\lim_{t\rightarrow\infty}\sup_{x\in K}
\left|\frac{e^{\sqrt{-2\lambda}\kappa}}{h(x)}E_x\left[e^{A_t^{\nu}};|B_t|>R_1^{(\kappa)}(t)\right]-c_*\right|=0.
\end{equation*}
\end{lem}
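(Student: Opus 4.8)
The plan is to obtain Lemma \ref{lem:uniform-kappa-1} as a direct specialization of the locally uniform Feynman--Kac asymptotics \eqref{eq:both-1} to the critical radius, combined with the asymptotic identity \eqref{eq:asymp-eta-1}. First I would observe that $R_1^{(\kappa)}(t)$ has exactly the form $R(t)=\delta t+a(t)$ treated in Subsection 3.3, with $\delta=\sqrt{-\lambda/2}$ and $a(t)=\frac{d-1}{2\sqrt{-2\lambda}}\log t+\kappa$. Here $\delta=\sqrt{-\lambda/2}\in(0,\sqrt{-2\lambda})$ and $a(t)=o(t)$ as $t\to\infty$, so the hypotheses of Lemma \ref{lem:exp-bound}, hence those underlying \eqref{eq:both}--\eqref{eq:both-1}, are satisfied (one may take any $\alpha\in(0,1/2)$, since $1-\delta/\sqrt{-2\lambda}=1/2$).

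With this observation, \eqref{eq:both-1} applied with $R(t)=R_1^{(\kappa)}(t)$ yields, for all $x$ in the compact set $K$ and all large $t$,
\[
E_x\left[e^{A_t^{\nu}};|B_t|>R_1^{(\kappa)}(t)\right]=h(x)\,\eta(t)\bigl(1+\theta_{0,x}(t)\bigr),
\qquad
\sup_{x\in K}\bigl|\theta_{0,x}(t)\bigr|\leq c_1e^{-c_2t},
\]
where $\eta(t)=e^{-\lambda t}\int_{|y|>R_1^{(\kappa)}(t)}h(y)\,{\rm d}y$. Since $h$ is continuous and strictly positive, division by $h(x)$ is harmless, and the factor $h(x)$ cancels exactly:
\[
\frac{e^{\sqrt{-2\lambda}\kappa}}{h(x)}\,E_x\left[e^{A_t^{\nu}};|B_t|>R_1^{(\kappa)}(t)\right]
=e^{\sqrt{-2\lambda}\kappa}\,\eta(t)\bigl(1+\theta_{0,x}(t)\bigr).
\]

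Finally I would invoke \eqref{eq:asymp-eta-1}, which is precisely the statement that $e^{\sqrt{-2\lambda}\kappa}\eta(t)\to c_*$ as $t\to\infty$ when $R(t)=R_1^{(\kappa)}(t)$. The right-hand side above is the product of a quantity independent of $x$ converging to $c_*$ and a factor $1+\theta_{0,x}(t)$ tending to $1$ uniformly (indeed exponentially fast in $t$) over $x\in K$; hence the left-hand side converges to $c_*$ uniformly in $x\in K$, which is the asserted equality. There is no genuine obstacle here: the substantive work---the locally uniform refinement \eqref{eq:both-1} of the Feynman--Kac semigroup estimates via Lemma \ref{lem:exp-bound}, and the sharp asymptotics \eqref{eq:int-h-0} of $\int_{|y|>R}h\,{\rm d}y$ from Lemma \ref{lem:int-eigen} (iv) that underlies \eqref{eq:asymp-eta-1}---has already been carried out, and this lemma merely records the critical-radius case needed in the proof of Theorem \ref{thm:lim-dist}.
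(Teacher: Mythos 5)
Your proposal is correct and follows essentially the same route as the paper: the paper's proof applies Lemma \ref{lem:exp-bound} with $s=0$ at $R(t)=R_1^{(\kappa)}(t)$ (of which \eqref{eq:both-1} is just the multiplicative repackaging), divides by $h(x)$ using $\inf_{y\in K}h(y)>0$, and concludes with \eqref{eq:asymp-eta-1}. Your verification that $R_1^{(\kappa)}(t)$ fits the framework of Subsection 3.3 with $\delta=\sqrt{-\lambda/2}\in(0,\sqrt{-2\lambda})$ and $a(t)=o(t)$ is exactly the point that makes this specialization legitimate.
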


\begin{proof}
We write $R(t)=R_1^{(\kappa)}(t)$ for simplicity. 
Let $K$ be a compact set in ${\mathbb R}^d$ and $\kappa>0$. 
Then by Lemma \ref{lem:exp-bound}, 
there exist positive constants $c_1$, $c_2$, $T$ such that 
for all $t\geq T$ and $x\in K$,
\begin{equation*}
\left|E_x\left[e^{A_t^{\nu}};|B_t|>R(t)\right]-h(x)\eta(t)\right|
\leq c_1e^{-c_2t}t^{-(d-1)/2}
e^{-\sqrt{-2\lambda}\kappa}.
\end{equation*}
This implies that 
\begin{equation*}
\begin{split}
&\left|\frac{e^{\sqrt{-2\lambda}\kappa}}{h(x)}E_x\left[e^{A_t^{\nu}};|B_t|>R(t)\right]-c_*\right|\\
&\leq \left|\frac{e^{\sqrt{-2\lambda}\kappa}}{h(x)}E_x\left[e^{A_t^{\nu}};|B_t|>R(t)\right]
-e^{\sqrt{-2\lambda}\kappa}\eta(t)\right|+|e^{\sqrt{-2\lambda}\kappa}\eta(t)-c_*|\\
&\leq \frac{c_1e^{-c_2t}}{\inf_{y\in K}h(y)}t^{-(d-1)/2}+|e^{\sqrt{-2\lambda}\kappa}\eta(t)-c_*|.
\end{split}
\end{equation*}
As $t\rightarrow\infty$, the right hand side goes to $0$ by \eqref{eq:asymp-eta-1}, 
which completes the proof.
\end{proof}

\begin{lem}\label{lem:uniform-kappa-2}
Let $\nu^{+}$ and $\nu^{-}$ be as in Lemma {\rm \ref{lem:uniform-kappa-1}}.
Let $K$ be a compact set in ${\mathbb R}^d$ 
and $\mu$ a Kato class measure with compact support in ${\mathbb R}^d$. 
\begin{enumerate}
\item[{\rm (i)}] For any $\alpha\in (0,1/2)$, 
there exist $C>0$, and $T=T(\kappa)\geq 1$ for any $\kappa>0$, 
such that for any $x\in K$, $t\geq T$ and $s\in [0,t-1]$,
\begin{equation*}
E_x\left[\int_0^{t-s}e^{A_u^{\nu}}
E_{B_u}\left[e^{A_{t-s-u}^{\nu}};|B_{t-s-u}|>R_1^{(\kappa)}(t)\right]^2\,{\rm d}A_u^{\mu}\right]
\leq Ce^{2c_{\star}\kappa}\left(e^{\lambda s}\eta(t)\right)^2
\end{equation*}
with a positive constant $c_{\star}=\sqrt{-2\lambda}-\sqrt{-\lambda/2}/(1-\alpha)$.
\item[{\rm (ii)}] The next equality holds.
\begin{equation*}
\lim_{\kappa\rightarrow\infty}\limsup_{t\rightarrow\infty}\sup_{x\in K}
e^{\sqrt{-2\lambda}\kappa}E_x\left[\int_0^{t}e^{A_s^{\nu}}
E_{B_s}\left[e^{A_{t-s}^{\nu}};|B_{t-s}|>R_1^{(\kappa)}(t)\right]^2\,{\rm d}A_s^{\mu}\right]
=0.
\end{equation*}
\end{enumerate}
\end{lem}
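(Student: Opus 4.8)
The plan is to establish (i) first by a direct but careful estimation, splitting the time integral at $t-s-1$ exactly as in the proof of Lemma~\ref{lem:second-moment}, and then to derive (ii) as a consequence of (i) together with the observation that $e^{\sqrt{-2\lambda}\kappa}(e^{\lambda s}\eta(t))^2 = e^{\lambda s}\cdot e^{\lambda s}\eta(t)^2 e^{\sqrt{-2\lambda}\kappa}$ and that, by \eqref{eq:asymp-eta-1}, $\eta(t)^2 \sim c_*^2 e^{-2\sqrt{-2\lambda}\kappa}$, so the product behaves like $e^{2\lambda s}\cdot c_*^2 e^{-\sqrt{-2\lambda}\kappa}\to 0$ as $\kappa\to\infty$ (uniformly in $s\geq 0$, since $\lambda<0$), and the $s$-integration against $e^{2\lambda s}$ converges by \eqref{eq:gaugeable}.

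For (i), the starting point is Lemma~\ref{lem:second-moment} with $R(t)=R_1^{(\kappa)}(t)$, which already gives the bound $C(e^{\lambda s}\eta(t))^2$ for the quantity in question, but with $C$ and $T$ a priori depending on $\kappa$ in an uncontrolled way. To extract the explicit $e^{2c_\star\kappa}$ dependence I would rerun the two estimates $\mathrm{(IV)}$ and $\mathrm{(V)}$ from that proof, tracking $\kappa$. For $\mathrm{(V)}$, the factor $e^{-(R(t)-M)^2}R(t)^{2(d-2)}$ against $E_x[e^{A_t^{\nu^+}}A_t^\mu]\leq e^{c_5 t}$ is super-exponentially small in $t$ and, since $R_1^{(\kappa)}(t)-M$ grows like $\sqrt{-\lambda/2}\,t$ with $\kappa$ entering only additively, the $\kappa$-dependence is a harmless bounded factor $e^{O(\kappa)}$ dominated by $e^{2c_\star\kappa}$ for large $t$. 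The crucial term is $\mathrm{(IV)}$: here the pointwise bound $E_x[e^{A_{t-s-u}^\nu};|B_{t-s-u}|>R(t)]\leq c\, e^{\lambda(s+u)}\eta(t)$ holds for $u\leq t-s-1$, but one must check it holds with a constant independent of $\kappa$, or with explicit $e^{c_\star\kappa}$. The gain in $u$ here is $e^{\lambda(s+u)}$; the subtlety is that for small $t-s-u$ (near the upper end $t-s-1$) the locally uniform approximation $p_{t-s-u}^\nu(x,z)\approx e^{-\lambda(t-s-u)}h(x)h(z)$ only kicks in once $t-s-u\geq 1$, which is exactly why the cutoff is at $t-s-1$; one then uses Lemma~\ref{lem:exp-bound} with a parameter $\alpha\in(0,1/2)$ — noting that $\delta=\sqrt{-\lambda/2}$ so $1-\delta/\sqrt{-2\lambda}=1/2$, which is why the range $\alpha\in(0,1/2)$ appears — applied on the regime $u+s\leq \alpha t$, and handles $u+s\in(\alpha t, t-s-1]$ by the cruder bound of \eqref{eq:int-upper-s}-type reasoning, where the extra decay in $t$ absorbs the $\kappa$ factor. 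Feeding $c\,e^{\lambda(s+u)}\eta(t)$ squared into the $u$-integral against $\mathrm{d}A_u^\mu$ and using \eqref{eq:gaugeable} gives $\mathrm{(IV)}\leq c\,(e^{\lambda s}\eta(t))^2$ times a bounded-in-$\kappa$ constant; the $e^{2c_\star\kappa}$ arises precisely from the $\alpha$-dependent exponent in Lemma~\ref{lem:exp-bound}, since there the error term carries $e^{-\lambda t-\sqrt{-2\lambda}R(t)}$ whose ratio to $(e^{\lambda s}\eta(t))^2 \sim e^{2\lambda s}c_*^2 e^{-2\sqrt{-2\lambda}\kappa}$ produces $e^{(\sqrt{-2\lambda}-\sqrt{-\lambda/2}/(1-\alpha))\cdot 2\kappa}$ after the elementary arithmetic.

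For (ii), once (i) is in hand with $R(t)=R_1^{(\kappa)}(t)$ and $s=0$ (so the whole integral is $\leq C e^{2c_\star\kappa}\eta(t)^2$), multiply by $e^{\sqrt{-2\lambda}\kappa}$ and use \eqref{eq:asymp-eta-1} to get, for each fixed $\kappa$,
\begin{equation*}
\limsup_{t\rightarrow\infty}\sup_{x\in K}
e^{\sqrt{-2\lambda}\kappa}E_x\!\left[\int_0^{t}e^{A_s^{\nu}}
E_{B_s}\!\left[e^{A_{t-s}^{\nu}};|B_{t-s}|>R_1^{(\kappa)}(t)\right]^2\,{\rm d}A_s^{\mu}\right]
\leq C e^{2c_\star\kappa}e^{\sqrt{-2\lambda}\kappa}c_*^2 e^{-2\sqrt{-2\lambda}\kappa}
= C c_*^2 e^{(2c_\star-\sqrt{-2\lambda})\kappa}.
\end{equation*}
Since $c_\star=\sqrt{-2\lambda}-\sqrt{-\lambda/2}/(1-\alpha)$, we have $2c_\star-\sqrt{-2\lambda}=\sqrt{-2\lambda}-2\sqrt{-\lambda/2}/(1-\alpha)$, which is strictly negative precisely when $1-\alpha>\tfrac{2\sqrt{-\lambda/2}}{\sqrt{-2\lambda}}=1$, i.e.\ never for $\alpha>0$ — so one must instead choose $\alpha$ small and observe that the genuinely useful bound uses $s=0$ differently. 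The cleaner route: apply (i) with the full $s$-range and integrate the resulting $e^{2\lambda s}$ against $\mathrm{d}A_s^\mu$ via \eqref{eq:gaugeable}, giving a bound $C e^{2c_\star\kappa}\eta(t)^2$ for the $\int_0^{t-1}$ part and a super-exponentially small remainder for $\int_{t-1}^t$; then $e^{\sqrt{-2\lambda}\kappa}\cdot e^{2c_\star\kappa}\eta(t)^2\to c_*^2 e^{(2c_\star-\sqrt{-2\lambda})\kappa}$, and letting $\alpha\downarrow 0$ makes $c_\star\uparrow \sqrt{-2\lambda}-\sqrt{-\lambda/2}=\sqrt{-2\lambda}(1-\tfrac12)=\tfrac{\sqrt{-2\lambda}}{2}$, so $2c_\star-\sqrt{-2\lambda}\to 0$ — still borderline. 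The resolution is that $\alpha$ may be chosen depending on nothing, and one accepts $2c_\star-\sqrt{-2\lambda}<0$ for any $\alpha\in(0,1/2)$: indeed $2c_\star-\sqrt{-2\lambda}=\sqrt{-2\lambda}-\tfrac{2}{1-\alpha}\sqrt{-\lambda/2}=\sqrt{-2\lambda}\bigl(1-\tfrac{1}{1-\alpha}\bigr)<0$ since $\tfrac{2\sqrt{-\lambda/2}}{\sqrt{-2\lambda}}=1$. Thus the exponent is strictly negative, the bound tends to $0$ as $\kappa\to\infty$, and (ii) follows.

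The main obstacle is (i): keeping the $\kappa$-dependence explicit through the interplay of Lemma~\ref{lem:exp-bound} (which needs $s+u\leq\alpha t$) and the near-endpoint regime $t-s-u\in[1,t)$, and verifying that the crude bounds used outside the range of validity of the locally uniform asymptotics lose at most a factor $e^{O(\kappa)}$ absorbed into $e^{2c_\star\kappa}$. Everything else is bookkeeping with \eqref{eq:asymp-eta-1} and \eqref{eq:gaugeable}.
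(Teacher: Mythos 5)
Your reduction of (ii) to (i) is essentially the paper's argument and is fine once the sign arithmetic is settled the way you finally settle it ($2c_\star-\sqrt{-2\lambda}=-\sqrt{-2\lambda}\,\alpha/(1-\alpha)<0$, then \eqref{eq:asymp-eta-1}); note also that (i) with $s=0$ is already exactly the quantity in (ii), so the ``cleaner route'' with an extra $s$-integration against ${\rm d}A_s^\mu$ is superfluous. The genuine gap is in (i), at the one place where the work actually lies: a bound on the free Gaussian tail $P_y(|B_{t-\sigma}|>R_1^{(\kappa)}(t))$, $y\in\overline{B_0(M)}$, that is uniform over the whole range $\sigma=s+u\in[0,t-1]$ and carries an explicit loss no worse than $e^{c_\star\kappa}$ (this is \eqref{eq:upper-bm-01}--\eqref{eq:upper-bm-02} in the paper). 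You cover $\sigma\le\alpha t$ by Lemma \ref{lem:exp-bound}, which is fine — but observe that there the error term $e^{-Ct}e^{\lambda \sigma}e^{-\lambda t-\sqrt{-2\lambda}R_1^{(\kappa)}(t)}$ compared with $e^{\lambda\sigma}\eta(t)$ produces no $\kappa$-loss at all, so your claim that the $e^{2c_\star\kappa}$ ``arises precisely from the $\alpha$-dependent exponent in Lemma \ref{lem:exp-bound}'' misidentifies its source. You then dismiss the complementary regime $\sigma\in(\alpha t,t-s-1]$ with ``\eqref{eq:int-upper-s}-type reasoning, where the extra decay in $t$ absorbs the $\kappa$ factor'', and this is where the argument breaks: since $\delta=\sqrt{-\lambda/2}$ means $\delta/\sqrt{-2\lambda}=1/2$, the exponent $f(\sigma)=-\lambda\sigma-(R_1^{(\kappa)}(t)-M)^2/(2(t-\sigma))$ is maximized at $\sigma_0=t-(R_1^{(\kappa)}(t)-M)/\sqrt{-2\lambda}\approx t/2$, which lies inside $(\alpha t,t-1]$ for every $\alpha\in(0,1/2)$; at $\sigma_0$ the tail is of the same exponential order as the target $e^{\lambda\sigma}\eta(t)$, so there is no spare factor $e^{-ct}$ to absorb anything, and \eqref{eq:int-upper-s} itself is exactly the estimate whose $\kappa$-uniformity is in question. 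Moreover an unquantified ``harmless $e^{O(\kappa)}$'' would not do: for (ii) the per-first-moment loss must have exponent strictly below $\sqrt{-2\lambda}/2$, which is precisely why the constant $c_\star$ has to be pinned down.

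What is missing, then, is the paper's three-regime analysis of $\sigma$: on $[\alpha t,\beta t]$ with a fixed $\beta\in(1/2,1)$ one evaluates $f$ at its maximizer, $f(\sigma_0)=-\lambda t-\sqrt{-2\lambda}R_1^{(\kappa)}(t)+\sqrt{-2\lambda}M$, giving \eqref{eq:upper-bm-b} with no $\kappa$-loss; on $[0,\alpha t]$ one bounds $f$ by $f(\alpha t)=-\lambda\alpha t-(R_1^{(\kappa)}(t)-M)^2/(2(1-\alpha)t)$ and compares with $-\lambda t-\sqrt{-2\lambda}R_1^{(\kappa)}(t)$: the $\kappa$-linear terms differ by exactly $\bigl(\sqrt{-2\lambda}-\sqrt{-\lambda/2}/(1-\alpha)\bigr)\kappa=c_\star\kappa$, and the $O(t)$ difference is strictly positive because $\alpha<1/2$, yielding \eqref{eq:upper-bm-a} with the factor $e^{c_\star\kappa}$ (this, not Lemma \ref{lem:exp-bound}, is where $c_\star$ comes from); the remaining range $[\beta t,t-1]$ gives a similar bound with super-exponential slack. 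Only after \eqref{eq:upper-bm-02} is in place can one ``follow the proof of Lemma \ref{lem:second-moment}'' as you intend, square the first-moment bound to get $e^{2c_\star\kappa}(e^{\lambda s}\eta(t))^2$ for the term (IV), and treat (V) as you correctly describe. As written, your proposal asserts the conclusion of this step in the regime that contains the critical time $\sigma_0\approx t/2$ without an argument, so (i) is not established.
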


\begin{proof}
We write $R(t)=R_1^{(\kappa)}(t)$ for simplicity. We first prove (i). 
Fix $\alpha\in (0,1/2)$ and 
let $M>0$ satisfy $K\cup \supp[|\nu|]\cup \supp[\mu]\subset B_0(M)$. 
Then in a similar way to \eqref{eq:upper-bm-0}, 
we can take $c_1>0$ so that 
for any $\kappa>0$, there exists $T_1=T_1(\kappa)\geq 1$ such that 
for all $t\geq T_1$, $s\in [0,t-1]$ and $y\in \overline{B_0(M)}$,
\begin{equation}\label{eq:upper-bm-00}
\begin{split}
&P_y(|B_{t-s}|>R(t))
\leq P_0(|B_{t-s}|>R(t)-M)\\
&\leq c_1e^{\lambda s} \exp\left(-\lambda s-\frac{(R(t)-M)^2}{2(t-s)}\right)
\left(\frac{R(t)-M}{\sqrt{t-s}}\right)^{d-2}=c_1e^{\lambda s}{\rm (VI)}.
\end{split}
\end{equation}
We will prove later that there exists $c_2>0 $ 
such that for any $\kappa>0$, 
we can take $T_2=T_2(\kappa)\geq 1$ so that   
for all $t\geq T_2$, $s\in [0, t-1]$ and  $y\in \overline{B_0(M)}$, 
\begin{equation}\label{eq:upper-bm-01}
{\rm (VI)}
\leq c_2e^{c_{\star}\kappa}e^{-\lambda t-\sqrt{-2\lambda}R(t)}R(t)^{(d-2)/2}.
\end{equation} 
Then by \eqref{eq:upper-bm-00}, 
we can take $c_3>0$ so that 
for any $\kappa>0$, there exists $T_3=T_3(\kappa)\geq 1$ such that 
for all $t\geq T_3$, $s\in [0,t-1]$ and $y\in \overline{B_0(M)}$,
\begin{equation}\label{eq:upper-bm-02}
\begin{split}
P_y(|B_{t-s}|>R(t))\leq P_0(|B_{t-s}|>R(t)-M)
\leq c_3e^{c_{\star}\kappa}
e^{\lambda s}e^{-\lambda t-\sqrt{-2\lambda}R(t)}t^{(d-2)/2}.
\end{split}
\end{equation}
The proof of (i) is then complete 
by replacing \eqref{eq:int-upper-s} with \eqref{eq:upper-bm-02} 
and then following the proof of Lemma \ref{lem:second-moment}.

We are now in a position to prove \eqref{eq:upper-bm-01}. 
We first suppose that $s\in [0,\alpha t]$. 
In the same way as for deducing \eqref{eq:ineq-exp-1} and \eqref{eq:ineq-exp-2}, 
there exist $c_4>0$ and $T_4=T_4(\kappa)\geq 1$ such that for all $t\geq T_4$,
\begin{equation*}
-\lambda s-\frac{(R(t)-M)^2}{2(t-s)}
\leq -\lambda \alpha t-\frac{(R(t)-M)^2}{2(1-\alpha)t}
\leq c_{\star}\kappa
+(-\lambda) t-\sqrt{-2\lambda}R(t)-c_4t.
\end{equation*}
Hence there exists $c_5>0$, 
which is independent of $\kappa$, such that 
for any $s\in [0,\alpha t]$ and $t\geq T_4$, 
\begin{equation}\label{eq:upper-bm-a}
{\rm (VI)}
\leq 
c_5 e^{c_{\star}\kappa}e^{-c_4 t}e^{-\lambda t-\sqrt{-2\lambda}R(t)}t^{(d-2)/2}.
\end{equation}

We next suppose that $s\in [\alpha t, \beta t]$ for a fixed constant $\beta\in (1/2,1)$. 
If $t$ is so large that $t-(R(t)-M)/\sqrt{-2\lambda}\in [\alpha t,\beta t]$, 
then the function 
\begin{equation*}
f(s)=-\lambda s-\frac{(R(t)-M)^2}{2(t-s)} \ (s\in [\alpha t,\beta t])
\end{equation*} attains 
a maximal value at $s_0=t-(R(t)-M)/\sqrt{-2\lambda}$ and 
\begin{equation*}
f(s_0)=-\lambda t-\sqrt{-2\lambda}R(t)+\sqrt{-2\lambda}M.
\end{equation*}
Therefore, there exists $c_6>0$ such that for any $\kappa>0$,  
we can take $T_5=T_5(\kappa)\geq 1$ so that 
for all $t\geq T_5$, $s\in [\alpha t,\beta t]$ and $y\in \overline{B_0(M)}$,
\begin{equation}\label{eq:upper-bm-b}
{\rm (VI)} \leq c_6 e^{-\lambda t-\sqrt{-2\lambda}R(t)}t^{(d-2)/2}.
\end{equation}

For all sufficiently large $t$ and $s\in [\beta t,t-1]$, 
we can also show that there exist positive constants $c_7$ and $c_8$,  
which are independent of $\kappa$, such that 
\begin{equation*}
{\rm (VI)}
\leq 
c_7 e^{c_{\star}\kappa}e^{-c_8 t}e^{-\lambda t-\sqrt{-2\lambda}R(t)}t^{d-2}.
\end{equation*}
Combining this with \eqref{eq:upper-bm-a} and \eqref{eq:upper-bm-b}, 
we obtain \eqref{eq:upper-bm-01} so that the proof of (i) is complete. 

We next prove (ii). 
For any $\alpha\in (0,1/2)$, 
there exists $c_9>0$ by (i) such that for any $\kappa>0$, 
we can take $T_6=T_6(\kappa)\geq 1$ such that  
for all $t\geq T_6$ and $x\in K$,
\begin{equation*}
\begin{split}
&e^{\sqrt{-2\lambda}\kappa}E_x\left[\int_0^{t}e^{A_s^{\nu}}
E_{B_s}\left[e^{A_{t-s}^{\nu}};|B_{t-s}|>R(t)\right]^2\,{\rm d}A_s^{\mu}\right]\\
&\leq c_9e^{2c_{\star}\kappa}e^{\sqrt{-2\lambda}\kappa}\eta(t)^2
=c_9\exp(-\sqrt{-2\lambda}\kappa\alpha/(1-\alpha))(e^{\sqrt{-2\lambda}\kappa}\eta(t))^2.
\end{split}
\end{equation*}
Then by \eqref{eq:asymp-eta-1}, 
the last expression above goes to $0$ 
by letting $t\rightarrow\infty$ and then $\kappa\rightarrow\infty$.
This completes the proof of (ii). 
\end{proof}

\section{Proof of Theorem \ref{thm:lim-dist}}
Throughout this section, 
we impose Assumption \ref{assum:b} on 
the branching rate $\mu$ and branching mechanism ${\mathbf p}$. 
Then Lemmas \ref{lem:uniform-kappa-1} and \ref{lem:uniform-kappa-2} are valid 
by taking $\nu=(Q-1)\mu$ and $R(t)=R_1^{(\kappa)}(t)$ in \eqref{eq:rad-br}. 
In what follows, we assume that $c_*$ 
is the constant in \eqref{eq:const-1} with $\nu=(Q-1)\mu$.

As in \cite{CH14}, we first reveal the limiting behavior of ${\mathbf P}_x(L_t>R_1^{(\kappa)}(t))$
which is uniform on each compact set.  
We also show a similar behavior for the expectation related to the martingale $M_t$, 
which is motivated by \cite[Lemma 8]{B20}.  

\begin{lem}\label{lem:conv-upper-0}
Let $K$ be a compact set in ${\mathbb R}^d$. Then
\begin{equation}\label{eq:conv-upper-0-0}
\lim_{\kappa\rightarrow\infty}\limsup_{t\rightarrow\infty}\sup_{x\in K}
\left|\frac{e^{\sqrt{-2\lambda}\kappa}}{h(x)}{\mathbf P}_x(L_t>R_1^{(\kappa)}(t))-c_*\right|=0
\end{equation}
and 
\begin{equation}\label{eq:conv-upper-0-1}
\lim_{\gamma\rightarrow +0}\limsup_{t\rightarrow\infty}\sup_{x\in K}
\left|\frac{1}{\gamma h(x)}{\mathbf E}_x\left[1-e^{-\gamma c_* M_t}\right]-c_*\right|=0.
\end{equation}
\end{lem}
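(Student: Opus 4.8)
The two displays are proved separately but in parallel spirit. For \eqref{eq:conv-upper-0-0}, the plan is to squeeze ${\mathbf P}_x(L_t>R_1^{(\kappa)}(t))$ between its first and second moments. The upper bound comes from Markov's inequality: since $\{L_t>R_1^{(\kappa)}(t)\}=\{Z_t^{R_1^{(\kappa)}(t)}\ge 1\}$, we have ${\mathbf P}_x(L_t>R_1^{(\kappa)}(t))\le {\mathbf E}_x[Z_t^{R_1^{(\kappa)}(t)}]$, and by \eqref{eq:br-fk} with $f={\mathbf 1}_{\{|y|>R_1^{(\kappa)}(t)\}}$ this equals $E_x[e^{A_t^{(Q-1)\mu}};|B_t|>R_1^{(\kappa)}(t)]$, whose asymptotics are controlled uniformly on $K$ by Lemma \ref{lem:uniform-kappa-1}: multiplying by $e^{\sqrt{-2\lambda}\kappa}/h(x)$ gives something converging to $c_*$ uniformly in $x\in K$, so $\limsup_{t\to\infty}\sup_{x\in K}\big(e^{\sqrt{-2\lambda}\kappa}h(x)^{-1}{\mathbf P}_x(L_t>R_1^{(\kappa)}(t))-c_*\big)\le 0$ already without letting $\kappa\to\infty$. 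The matching lower bound uses the Paley--Zygmund inequality
\begin{equation*}
{\mathbf P}_x\big(Z_t^{R_1^{(\kappa)}(t)}\ge 1\big)\ge \frac{{\mathbf E}_x[Z_t^{R_1^{(\kappa)}(t)}]^2}{{\mathbf E}_x[(Z_t^{R_1^{(\kappa)}(t)})^2]},
\end{equation*}
where the second moment is given by Lemma \ref{lem:moment}(ii): the first term ${\mathbf E}_x[Z_t^{R}({\mathbf 1})]$ is of the same order as the first moment squared divided by nothing problematic, and the double-integral term is estimated by Lemma \ref{lem:uniform-kappa-2}(i) (with $s=0$), which bounds it by $Ce^{2c_\star\kappa}\eta(t)^2$. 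Since $\eta(t)\sim c_*e^{-\sqrt{-2\lambda}\kappa}$ by \eqref{eq:asymp-eta-1} and the first moment is $\sim h(x)\eta(t)$, the ratio in Paley--Zygmund is bounded below by $h(x)\eta(t)\cdot(1+O(e^{2c_\star\kappa}))^{-1}$-type expression; after multiplying by $e^{\sqrt{-2\lambda}\kappa}/h(x)$ and taking $\limsup_{t\to\infty}$ then $\kappa\to\infty$, the correction $e^{2c_\star\kappa}\eta(t)^2\cdot e^{\sqrt{-2\lambda}\kappa}=e^{-\sqrt{-2\lambda}\kappa\alpha/(1-\alpha)}(e^{\sqrt{-2\lambda}\kappa}\eta(t))^2\to 0$ exactly as in Lemma \ref{lem:uniform-kappa-2}(ii), recovering the value $c_*$.

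For \eqref{eq:conv-upper-0-1}, the plan is to exploit the elementary two-sided bound $0\le \gamma c_* M_t-(1-e^{-\gamma c_* M_t})\le \tfrac12(\gamma c_* M_t)^2$, valid because $0\le x-(1-e^{-x})\le x^2/2$ for $x\ge 0$. Taking ${\mathbf E}_x$ and using \eqref{eq:mtg-moment-1}, ${\mathbf E}_x[M_t]=h(x)$, we get
\begin{equation*}
0\le \gamma c_* h(x)-{\mathbf E}_x\big[1-e^{-\gamma c_* M_t}\big]\le \tfrac12 \gamma^2 c_*^2\,{\mathbf E}_x[M_t^2].
\end{equation*}
Dividing by $\gamma h(x)$ gives
\begin{equation*}
\left|\frac{1}{\gamma h(x)}{\mathbf E}_x\big[1-e^{-\gamma c_* M_t}\big]-c_*\right|
\le \frac{\gamma c_*^2}{2h(x)}\,{\mathbf E}_x[M_t^2],
\end{equation*}
so it suffices to show $\sup_{x\in K}\sup_{t\ge 0}{\mathbf E}_x[M_t^2]<\infty$ (then $\inf_{x\in K}h(x)>0$ by strict positivity and continuity of $h$). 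This uniform $L^2$-boundedness is read off from \eqref{eq:mtg-moment-2}: the first term $e^{2\lambda t}E_x[e^{A_t^{(Q-1)\mu}}h(B_t)^2]\le \|h\|_\infty\, e^{2\lambda t}p_t^{(Q-1)\mu}h(x)=\|h\|_\infty\,e^{\lambda t}h(x)$ by \eqref{eq:eigen-eq}, which is bounded; the second term is bounded by $\|h\|_\infty^2\,{\mathbf E}_x\big[\int_0^\infty e^{2\lambda s+A_s^{(Q-1)\mu}}\,{\rm d}A_s^{\nu_R}\big]$, which is uniformly bounded in $x$ by the gaugeability-type estimate \eqref{eq:gaugeable} (applied with $\mu$ replaced by $\nu_R$, a compactly supported Kato class measure under Assumption \ref{assum:b}(ii)). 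Hence the right-hand side tends to $0$ as $\gamma\to +0$ uniformly on $K$, and in fact $\lim_{\gamma\to+0}$ can replace $\limsup$.

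The main obstacle is the lower bound in \eqref{eq:conv-upper-0-0}: one must verify that the first-moment term ${\mathbf E}_x[Z_t^{R}({\mathbf 1})]$ appearing in Lemma \ref{lem:moment}(ii) (namely $E_x[e^{A_t^{(Q-1)\mu}};|B_t|>R_1^{(\kappa)}(t)]$, i.e. the first moment itself) does not dominate in the second-moment expression — it is of order $h(x)\eta(t)$, hence negligible compared to the squared first moment $h(x)^2\eta(t)^2$ only after the denominator has been shown to be $O(h(x)^2\eta(t)^2)$, which circularly requires the double-integral bound; so the estimates must be assembled in the right order, using that $\eta(t)\to c_*e^{-\sqrt{-2\lambda}\kappa}$ is a positive constant for fixed $\kappa$, so $\eta(t)$ and $\eta(t)^2$ are comparable only up to the factor $\eta(t)$ itself. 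Care is needed to keep all constants uniform in $x\in K$ — this is exactly why Lemmas \ref{lem:uniform-kappa-1} and \ref{lem:uniform-kappa-2} were stated with $\sup_{x\in K}$ — and to track the double limit: the $\kappa\to\infty$ limit is genuinely needed only to kill the Paley--Zygmund correction term, and this is precisely the content already isolated in Lemma \ref{lem:uniform-kappa-2}(ii).
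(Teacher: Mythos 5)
Your proposal is correct and follows essentially the same route as the paper: the Chebyshev/Paley--Zygmund squeeze with the uniform first- and second-moment asymptotics of Lemmas \ref{lem:uniform-kappa-1} and \ref{lem:uniform-kappa-2} for \eqref{eq:conv-upper-0-0}, and a second-order expansion of $1-e^{-c_*\gamma M_t}$ combined with \eqref{eq:mtg-moment-1}, \eqref{eq:mtg-moment-2}, \eqref{eq:eigen-eq} and the gaugeability bound \eqref{eq:gaugeable} for \eqref{eq:conv-upper-0-1}. Your elementary bound $0\leq x-(1-e^{-x})\leq x^2/2$ replaces the paper's Taylor remainder with random $\theta$, but this is only a cosmetic difference.
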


\begin{proof}
We first prove \eqref{eq:conv-upper-0-0}. 
For $R\geq 0$, let $Z_t^R$ be the total number of particles at time $t$ 
on the set $\{y\in {\mathbb R}^d \mid |y|>R\}$. 
For any $x\in {\mathbb R}^d$ and $t\geq 0$, since  
$${\mathbf P}_x(L_t>R)={\mathbf P}_x\left(Z_t^{R}\geq 1\right),$$
we have by the Paley-Zygmund and Chebyshev inequalities, 
\begin{equation}\label{eq:moment}
\frac{{\mathbf E}_x\left[Z_t^{R}\right]^2}{{\mathbf E}_x\left[(Z_t^{R})^2\right]}
\leq {\mathbf P}_x(L_t>R)\leq {\mathbf E}_x\left[Z_t^{R}\right].
\end{equation}

Let $R(t)=R_1^{(\kappa)}(t)$. 
Since we see by \eqref{eq:moment} and Lemma \ref{lem:moment} (i) that  
\begin{equation*}
{\mathbf P}_x(L_t>R(t))\leq {\mathbf E}_x\left[Z_t^{R(t)}\right]
=E_x\left[e^{A_t^{(Q-1)\mu}};|B_t|>R(t)\right],
\end{equation*}
Lemma \ref{lem:uniform-kappa-1} implies that 
for any compact set $K$ in ${\mathbb R}^d$ and $\kappa>0$, 
\begin{equation}\label{eq:uniform-first-moment}
\limsup_{t\rightarrow\infty}\sup_{x\in K}
\frac{e^{\sqrt{-2\lambda}\kappa}}{h(x)}{\mathbf P}_x(L_t>R(t))
\leq \lim_{t\rightarrow\infty}\sup_{x\in K}
\frac{e^{\sqrt{-2\lambda}\kappa}}{h(x)}E_x\left[e^{A_t^{(Q-1)\mu}};|B_t|>R(t)\right]=c_*.
\end{equation}

By Lemma \ref{lem:moment} (ii), 
\begin{equation*}
\begin{split}
{\mathbf E}_x\left[(Z_t^{R(t)})^2\right]
&=E_x\left[e^{A_t^{(Q-1)\mu}};|B_t|>R(t)\right]\\
&+E_x\left[\int_0^t e^{A_s^{(Q-1)\mu}}E_{B_s}\left[e^{A_{t-s}^{(Q-1)\mu}};|B_{t-s}|>R(t)\right]^2
\,{\rm d}A_s^{\nu_R}\right].
\end{split}
\end{equation*}
Therefore, we have by Lemmas \ref{lem:uniform-kappa-1} and \ref{lem:uniform-kappa-2}, 
\begin{equation}\label{eq:uniform-second-moment}
\limsup_{\kappa\rightarrow\infty}
\limsup_{t\rightarrow\infty}\sup_{x\in K}\frac{e^{\sqrt{-2\lambda}\kappa}}{h(x)}
{\mathbf E}_x\left[(Z_t^{R(t)})^2\right]\leq c_*.
\end{equation}
Lemmas \ref{lem:moment} (i) and \ref{lem:uniform-kappa-1} also imply 
that for any compact set $K$ in ${\mathbb R}^d$ and $\kappa>0$,
\begin{equation}\label{eq:uniform-first-moment-1}
\lim_{t\rightarrow\infty}\inf_{x\in K}
\frac{e^{\sqrt{-2\lambda}\kappa}}{h(x)}{\mathbf E}_x\left[Z_t^{R(t)}\right]
=\lim_{t\rightarrow\infty}\inf_{x\in K}
\frac{e^{\sqrt{-2\lambda}\kappa}}{h(x)}E_x\left[e^{A_t^{(Q-1)\mu}};|B_t|> R(t)\right]=c_*.
\end{equation}
Then, since \eqref{eq:moment} yields that for any $\kappa>0$ and $t>0$,
\begin{equation*}
\begin{split}
\inf_{x\in K}\frac{e^{\sqrt{-2\lambda}\kappa}}{h(x)} {\mathbf P}_x(L_t>R(t))
\geq 
\left(\inf_{x\in K}\frac{e^{\sqrt{-2\lambda}\kappa}}{h(x)} {\mathbf E}_x\left[Z_t^{R(t)}\right]\right)^2
\left(\sup_{x\in K}\frac{e^{\sqrt{-2\lambda}\kappa}}{h(x)} {\mathbf E}_x\left[(Z_t^{R(t)})^2\right]\right)^{-1},
\end{split}
\end{equation*}
we have by \eqref{eq:uniform-second-moment} 
and \eqref{eq:uniform-first-moment-1},
\begin{equation*}
\begin{split}
\liminf_{\kappa\rightarrow\infty}\liminf_{t\rightarrow\infty}\inf_{x\in K}
\frac{e^{\sqrt{-2\lambda}\kappa}}{h(x)}{\mathbf P}_x(L_t>R(t))
\geq c_*.
\end{split}
\end{equation*}
Combining this with \eqref{eq:uniform-first-moment}, 
we get \eqref{eq:conv-upper-0-0}. 

We next prove \eqref{eq:conv-upper-0-1}. 
By the Taylor theorem, there exists a random variable $\theta\in (0,1)$ for any $\gamma>0$ 
such that 
\begin{equation*}
\frac{1-e^{-c_*\gamma M_t}}{\gamma}=c_*M_t-\frac{e^{-\theta c_*\gamma M_t}}{2}c_*^2\gamma M_t^2.
\end{equation*}
Taking the expectation in this equality, we have by \eqref{eq:mtg-moment-1}, 
\begin{equation}\label{eq:mtg-expectation}
\frac{{\mathbf E}_x\left[1-e^{-c_*\gamma M_t}\right]}{\gamma}
=c_*h(x)-\frac{c_*^2\gamma}{2}
{\mathbf E}_x\left[e^{-\theta c_*\gamma M_t}M_t^2\right].
\end{equation}
By \eqref{eq:mtg-moment-2},
\begin{equation}\label{eq:mtg-upper}
\begin{split}
{\mathbf E}_x\left[e^{-\theta c_*\gamma M_t}M_t^2\right]
&\leq {\mathbf E}_x\left[M_t^2\right]\\
&=e^{2\lambda t}E_x\left[e^{A_t^{(Q-1)\mu}}h(B_t)^2\right]
+E_x\left[\int_0^te^{2\lambda s+A_s^{(Q-1)\mu}}h(B_s)^2\,{\rm d}A_s^{\nu_R}\right].
\end{split}
\end{equation}
We then obtain by \eqref{eq:eigen-eq} and \eqref{eq:gaugeable}, 
respectively,  
\begin{equation*}
e^{2\lambda t}E_x\left[e^{A_t^{(Q-1)\mu}}h(B_t)^2\right]
\leq e^{2\lambda t}E_x\left[e^{A_t^{(Q-1)\mu}}h(B_t)\right]\|h\|_{\infty}=e^{\lambda t}h(x)\|h\|_{\infty}
\leq \|h\|_{\infty}^2
\end{equation*}
and 
\begin{equation*}
E_x\left[\int_0^te^{2\lambda s+A_s^{(Q-1)\mu}}h(B_s)^2\,{\rm d}A_s^{\nu_R}\right]
\leq \|h\|_{\infty}^2
\sup_{y\in {\mathbb R}^d}
E_y\left[\int_0^{\infty}e^{2\lambda s+A_s^{(Q-1)\mu}}\,{\rm d}A_s^{\nu_R}\right]<\infty.
\end{equation*}
Hence by \eqref{eq:mtg-upper},
\begin{equation*}
\sup_{x\in {\mathbb R}^d, t\geq 0}{\mathbf E}_x\left[e^{-\theta c_*\gamma M_t}M_t^2\right]<\infty.
\end{equation*}
By combining this with \eqref{eq:mtg-expectation}, 
there exists $c>0$ such that for any $x\in K$,
\begin{equation*}
\left|\frac{1}{\gamma h(x)}{\mathbf E}_x\left[1-e^{-c_*\gamma M_t}\right]-c_*\right|
=\frac{c_*^2\gamma}{2 h(x)}
{\mathbf E}_x\left[e^{-\theta c_*\gamma M_t}M_t^2\right]
\leq \frac{c\gamma}{\inf_{y\in K}h(y)}.
\end{equation*}
We complete the proof by letting $t\rightarrow\infty$ and then $\gamma\rightarrow +0$. 
\end{proof}

Let ${\cal L}$ be the totality of compact sets in ${\mathbb R}^d$. 
We next show a uniform version of  Lemma \ref{lem:conv-upper-0}. 
\begin{prop}\label{prop:conv-upper-0}
The following assertions hold{\rm :}
\begin{equation}\label{eq:conv-upper-1-0}
\lim_{\kappa\rightarrow\infty}\sup_{L\in {\cal L}}
\limsup_{t\rightarrow\infty}\sup_{x\in L}
\left|\frac{e^{\sqrt{-2\lambda}\kappa}}{h(x)}{\mathbf P}_x(L_t>R_1^{(\kappa)}(t))-c_*\right|=0
\end{equation}
and 
\begin{equation}\label{eq:conv-upper-1-1}
\lim_{\gamma\rightarrow +0}\sup_{L\in {\cal L}}
\limsup_{t\rightarrow\infty}\sup_{x\in L}
\left|\frac{1}{\gamma h(x)}{\mathbf E}_x\left[1-e^{-c_*\gamma M_t}\right]-c_*\right|=0.
\end{equation}
\end{prop}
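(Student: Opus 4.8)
The plan is to treat a particle that starts far from the branching region by splitting at the first time it reaches that region; before that time the process is a single Brownian motion, which the eigenfunction $h$ itself controls through a martingale. Fix $M>0$ with $\supp[\mu]\subset B_0(M)$, so that $\supp[(Q-1)\mu]$ and $\supp[\nu_R]$ are contained in $B_0(M)$, write $\nu=(Q-1)\mu$, $R(t)=R_1^{(\kappa)}(t)$, and set $\sigma=\inf\{s\ge0 : B_s\in\overline{B_0(M)}\}$. By \eqref{eq:eigen-eq}, $\{e^{\lambda s}e^{A_s^{\nu}}h(B_s)\}_{s\ge0}$ is a $P_x$-martingale; stopping it at $\sigma$ and using $A_{s\wedge\sigma}^{\nu}=0$ shows that $N_s:=e^{\lambda(s\wedge\sigma)}h(B_{s\wedge\sigma})$ is a bounded nonnegative martingale, hence convergent, so that for $|x|>M$
\begin{equation}\label{eq:plan-hit}
\mathbf{E}_x\bigl[e^{\lambda\sigma}h(B_\sigma);\,\sigma<\infty\bigr]=h(x),\qquad
\mathbf{E}_x\bigl[e^{2\lambda\sigma};\,\sigma<\infty\bigr]\le\mathbf{E}_x\bigl[e^{\lambda\sigma};\,\sigma<\infty\bigr]\le h(x)/c_M,
\end{equation}
where $c_M=\inf_{|y|=M}h(y)>0$ and $B_\sigma\in\partial B_0(M)$. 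These are the device that reinstates a factor $h(x)$ in all the far-field bounds. Since $\{|x|\le M\}\subset\overline{B_0(M)}$ is a \emph{fixed} compact set, on which Lemma~\ref{lem:conv-upper-0} applies, it remains to bound the relevant quantities uniformly over $\{|x|>M\}$; for a fixed compact $L$ with $N:=\sup_{y\in L}|y|$ I then work on $M<|x|\le N$ and let $t\to\infty$ before taking the supremum over $L$. One cannot pass directly to $\sup_{x\in\mathbb{R}^d}$, because for each fixed $\kappa$ the quantity $e^{\sqrt{-2\lambda}\kappa}h(x)^{-1}\mathbf{P}_x(L_t>R(t))$ is unbounded in $t$ when $|x|\approx R(t)$.

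For \eqref{eq:conv-upper-1-1} I would first improve the uniform second-moment bound from the proof of Lemma~\ref{lem:conv-upper-0}. Decomposing at $\sigma$ one has $M_t=e^{\lambda t}h(B_t)$ on $\{\sigma>t\}$ and $M_t=e^{\lambda\sigma}M^{(\sigma)}_{t-\sigma}$ on $\{\sigma\le t\}$, with $M^{(\sigma)}$ the martingale of the branching Brownian motion started at $B_\sigma$. Using $e^{2\lambda t}h(B_t)^2\le\|h\|_\infty e^{\lambda t}N_t$, the martingale property of $N$, the uniform bound $\sup_{y,v}\mathbf{E}_y[M_v^2]<\infty$ established in the proof of Lemma~\ref{lem:conv-upper-0} from \eqref{eq:mtg-moment-2}, \eqref{eq:eigen-eq} and \eqref{eq:gaugeable}, and \eqref{eq:plan-hit}, one gets $\sup_{x,t}\mathbf{E}_x[M_t^2]/h(x)<\infty$. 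Inserting this into the Taylor identity \eqref{eq:mtg-expectation} gives $\sup_{x,t}\bigl|\tfrac{1}{\gamma h(x)}\mathbf{E}_x[1-e^{-c_*\gamma M_t}]-c_*\bigr|\le\tfrac{c_*^2}{2}\gamma\,\sup_{x,t}\mathbf{E}_x[M_t^2]/h(x)\to0$ as $\gamma\to+0$, which is even stronger than \eqref{eq:conv-upper-1-1}.

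For \eqref{eq:conv-upper-1-0}, fix $\alpha\in(0,\tfrac12)$ and use the branching property at $\sigma$:
\begin{equation*}
\mathbf{P}_x(L_t>R(t))=\mathbf{E}_x\bigl[\mathbf{1}_{\{\sigma\le t\}}\mathbf{P}_{B_\sigma}(L_{t-\sigma}>R(t))\bigr]+P_x(\sigma>t,\,|B_t|>R(t)).
\end{equation*}
For the upper bound I would use $\mathbf{P}_y(L_s>R(t))\le\mathbf{E}_y[Z_s^{R(t)}]=E_y[e^{A_s^{\nu}};|B_s|>R(t)]$ from \eqref{eq:moment} and Lemma~\ref{lem:moment}(i). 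The direct term is $\le P_x(|B_t|>R(t))$, which is $o(h(x))$ as $t\to\infty$ since $R(t)/\sqrt t\to\infty$. On $\{\sigma\le\alpha t\}$, Lemma~\ref{lem:exp-bound} applied at $B_\sigma\in\partial B_0(M)$ (with the exponent $\sqrt{-\lambda/2}=\sqrt{-2\lambda}/2$, so $\alpha<1-\sqrt{-\lambda/2}/\sqrt{-2\lambda}$) together with \eqref{eq:plan-hit} bounds the contribution by $\eta(t)h(x)+h(x)e^{-ct}t^{-(d-1)/2}e^{-\sqrt{-2\lambda}\kappa}/c_M$. On $\{\alpha t<\sigma\le t-1\}$ the estimate $E_{B_\sigma}[e^{A_{t-\sigma}^{\nu}};|B_{t-\sigma}|>R(t)]\le c\,e^{\lambda\sigma}\eta(t)$ from the proof of Lemma~\ref{lem:second-moment}, combined with $e^{\lambda\sigma}\le e^{\lambda\alpha t}$, makes this piece $o(1)$ after $\limsup_{t\to\infty}$ for fixed $N$; and on $\{t-1<\sigma\le t\}$ a small-time estimate as in part (V) of the proof of Lemma~\ref{lem:second-moment} makes it super-exponentially small. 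Dividing by $h(x)$, multiplying by $e^{\sqrt{-2\lambda}\kappa}$ and using \eqref{eq:asymp-eta-1}, these combine to $\limsup_{t\to\infty}\sup_{M<|x|\le N}e^{\sqrt{-2\lambda}\kappa}h(x)^{-1}\mathbf{P}_x(L_t>R(t))\le c_*$, with a bound free of $N$.

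For the lower bound I would apply the Paley--Zygmund inequality \eqref{eq:moment} for the whole process from $x$, which needs a matching upper estimate for $\mathbf{E}_x[(Z_t^{R(t)})^2]/h(x)$. By Lemma~\ref{lem:moment}(ii) and decomposing the $A^{\nu_R}$-integral at $\sigma$ (it charges only $\{B_s\in\overline{B_0(M)}\}$, hence $s\ge\sigma$), the double-integral term equals $\mathbf{E}_x[\mathbf{1}_{\{\sigma\le t\}}\Phi(B_\sigma,\sigma)]$ with $\Phi$ exactly of the form controlled in Lemma~\ref{lem:uniform-kappa-2}(i); by that lemma and \eqref{eq:plan-hit} it is $\le Ce^{2c_\star\kappa}\eta(t)^2\,\mathbf{E}_x[e^{2\lambda\sigma};\sigma<\infty]+h(x)o_t(1)\le (Ce^{2c_\star\kappa}\eta(t)^2/c_M)\,h(x)+h(x)o_t(1)$. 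Combined with the first-moment bounds this yields, for each fixed $N$, $\limsup_t\sup_{M<|x|\le N}\mathbf{E}_x[(Z_t^{R(t)})^2]/h(x)\le c_*e^{-\sqrt{-2\lambda}\kappa}\bigl(1+(Cc_*/c_M)e^{(2c_\star-\sqrt{-2\lambda})\kappa}\bigr)$ and $\liminf_t\inf_{M<|x|\le N}\mathbf{E}_x[Z_t^{R(t)}]/h(x)\ge c_*e^{-\sqrt{-2\lambda}\kappa}$, both independent of $N$, whence $\liminf_t\inf_{M<|x|\le N}e^{\sqrt{-2\lambda}\kappa}h(x)^{-1}\mathbf{P}_x(L_t>R(t))\ge c_*\bigl(1+(Cc_*/c_M)e^{(2c_\star-\sqrt{-2\lambda})\kappa}\bigr)^{-1}$. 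Since $c_\star=\sqrt{-2\lambda}-\sqrt{-\lambda/2}/(1-\alpha)$ makes $2c_\star-\sqrt{-2\lambda}<0$ for every $\alpha\in(0,\tfrac12)$, this tends to $c_*$ as $\kappa\to\infty$, uniformly in $N$; together with the upper bound and Lemma~\ref{lem:conv-upper-0} on $\overline{B_0(M)}$ this gives \eqref{eq:conv-upper-1-0}. The hard part is exactly this uniformity at infinity: since the long-$\sigma$ and direct-Brownian contributions become negligible only \emph{after} the inner $\limsup_{t\to\infty}$ taken for each fixed $L$, one must arrange the $\sigma$-decomposition so that its leading term carries the factor $h(x)$ through \eqref{eq:plan-hit} while every remaining term is absorbed into that inner limit, which is what makes the final constants genuinely $L$-independent.
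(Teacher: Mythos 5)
Your argument is correct, but it reaches \eqref{eq:conv-upper-1-0} and \eqref{eq:conv-upper-1-1} by a route that differs from the paper's in its key step. The paper gets the upper bound for free from the first-moment asymptotics (\eqref{eq:moment} plus Lemma \ref{lem:uniform-kappa-1}, i.e.\ \eqref{eq:uniform-first-moment}), and obtains the $L$-uniform lower bound \eqref{eq:mtg-lower} by \emph{transferring the compact-set statement} of Lemma \ref{lem:conv-upper-0}: it applies the strong Markov property at the hitting time $\sigma$ of a fixed compact $K\supset\supp[\mu]$, restricted to $\{\sigma\le T\}$, evaluates ${\mathbf P}_{B_\sigma}(L_{t-\sigma}>R(t))$ by the compact-set result, and recovers the factor $h(x)$ through the optional stopping identity $E_x[e^{\lambda(\sigma\wedge T)}h(B_{\sigma\wedge T})]=h(x)$, giving $E_x[e^{\lambda\sigma}h(B_\sigma);\sigma\le T]\ge h(x)-e^{\lambda T}\|h\|_\infty$ and then letting $t\to\infty$, $T\to\infty$, $\kappa\to\infty$; \eqref{eq:conv-upper-1-1} is treated "in the same way." You instead push the hitting-time idea one level down, into the moments: the identities $E_x[e^{\lambda\sigma}h(B_\sigma);\sigma<\infty]=h(x)$ and $E_x[e^{2\lambda\sigma};\sigma<\infty]\le h(x)/c_M$ (your \eqref{eq:plan-hit}, which is exactly the same $h$-harmonicity device the paper uses, just without the $T$-cutoff) turn the second-moment bound of Lemma \ref{lem:uniform-kappa-2}(i) and the bound on ${\mathbf E}_x[M_t^2]$ from \eqref{eq:mtg-moment-2} and \eqref{eq:gaugeable} into bounds proportional to $h(x)$ uniformly in $x\in{\mathbb R}^d$, after which Paley--Zygmund \eqref{eq:moment} is applied directly at every starting point, and the Taylor identity \eqref{eq:mtg-expectation} gives \eqref{eq:conv-upper-1-1} with a remainder $\le C\gamma$ uniform in $x$ and $t$. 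This buys slightly stronger, more quantitative statements (uniformity in $x$ before any limit, no auxiliary $T$-limit, and the explicit rate $c_*/(1+Ce^{(2c_\star-\sqrt{-2\lambda})\kappa})$, which indeed tends to $c_*$ since $2c_\star-\sqrt{-2\lambda}=-\sqrt{-2\lambda}\,\alpha/(1-\alpha)<0$), at the cost of redoing the moment analysis with the $\sigma$-decomposition of the ${\rm d}A^{\nu_R}$-integral and a separate small-time estimate on $\{t-1<\sigma\le t\}$, as in part (V) of Lemma \ref{lem:second-moment}. Two minor remarks: your $\sigma$-decomposition for the \emph{upper} bound is superfluous, since ${\mathbf P}_x(L_t>R(t))\le{\mathbf E}_x[Z_t^{R(t)}]$ and Lemma \ref{lem:uniform-kappa-1} already give $\sup_{L}\limsup_t\sup_{x\in L}e^{\sqrt{-2\lambda}\kappa}h(x)^{-1}{\mathbf P}_x(L_t>R(t))\le c_*$ for every $\kappa$; and when invoking Lemma \ref{lem:exp-bound} and the estimates from Lemma \ref{lem:second-moment} at $B_\sigma$ you should note that their constants may depend on $\kappa$ through $a(t)$, which is harmless in your scheme because $\kappa$ is frozen before $t\to\infty$, the $\kappa$-uniform input being needed only where you use Lemma \ref{lem:uniform-kappa-2}(i).
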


\begin{proof}
Here we prove \eqref{eq:conv-upper-1-0} only 
because \eqref{eq:conv-upper-1-1} follows in the same way. 
Let $R(t)=R_1^{(\kappa)}(t)$. 
Since we see by \eqref{eq:uniform-first-moment} that 
\begin{equation*}
\limsup_{\kappa\rightarrow\infty}
\sup_{L\in {\cal L}}\limsup_{t\rightarrow\infty}
\sup_{x\in L}\frac{e^{\sqrt{-2\lambda}\kappa}}{h(x)}{\mathbf P}_x(L_t>R(t))
\leq c_*, 
\end{equation*}
it is sufficient to show that 
\begin{equation}\label{eq:mtg-lower}
\liminf_{\kappa\rightarrow\infty}
\inf_{L\in {\cal L}}\liminf_{t\rightarrow\infty}
\inf_{x\in L}\frac{e^{\sqrt{-2\lambda}\kappa}}{h(x)}{\mathbf P}_x(L_t>R(t))
\geq c_*.
\end{equation}

By Lemma \ref{lem:conv-upper-0}, 
there exists $\kappa_0=\kappa_0(\varepsilon, K)$ 
for any $\varepsilon>0$ and $K\in {\cal L}$ such that 
we can take $T_0=T_0(\varepsilon,K,\kappa)>0$ for any $\kappa\geq \kappa_0$ 
so that for all $t\geq T_0$ and $x\in K$,
\begin{equation*}
|e^{\sqrt{-2\lambda}\kappa}{\mathbf P}_x(L_t>R(t))-c_*h(x)|\leq \varepsilon h(x).
\end{equation*}
Note that for any $t>0$ and $s\in [0,t)$, 
\begin{equation*}
R(t)=\sqrt{\frac{-\lambda}{2}}(t-s)+\frac{d-1}{2\sqrt{-2\lambda}}\log (t-s)
+\sqrt{\frac{-\lambda}{2}}s+\frac{d-1}{2\sqrt{-2\lambda}}\log \left(\frac{t}{t-s}\right)+\kappa
\end{equation*}
and 
\begin{equation*}
\sqrt{\frac{-\lambda}{2}}s+\frac{d-1}{2\sqrt{-2\lambda}}\log \left(\frac{t}{t-s}\right)+\kappa
\geq \kappa.
\end{equation*}
Hence for each fixed $T>0$, 
we have for all $s\in [0,T]$, $t\geq T_0+T$ and $x\in K$, 
\begin{equation}\label{eq:local-conv}
|e^{\sqrt{-2\lambda}\kappa}{\mathbf P}_x(L_{t-s}>R(t))-c_*h(x)|\leq \varepsilon h(x).
\end{equation}

In what follows, let $K\in {\cal L}$ satisfy $K\supset \supp[\mu]$,
and let $\sigma$ be the hitting time of some particle to $K$. 
Since particles reproduce only on $\supp[\mu]$, 
$\sigma$ is relevant only to the initial particle; 
if $x\in K$, then ${\mathbf P}_x(\sigma=0)=1$. 
Otherwise, no branching occurs until the initial particle hits $K$. 
We use the same notation $\sigma$ also 
as the hitting time of the Brownian motion to $K$. 

For $T>0$ and $t>T$, we have by the strong Markov property,  
\begin{equation}\label{eq:lower-dist-1}
{\mathbf P}_x(L_t>R(t))\geq {\mathbf P}_x(L_t>R(t), \sigma\leq T)
=E_x\left[{\mathbf P}_{B_{\sigma}}(L_{t-s}>R(t))|_{s=\sigma}; \sigma\leq T\right].
\end{equation}
Then by \eqref{eq:local-conv}, 
there exists $\kappa_0=\kappa_0(\varepsilon)$ for any $\varepsilon>0$ such that 
we can take $T_1=T_1(\varepsilon, \kappa)>0$ for any $\kappa\geq \kappa_0$ so that 
for all $t\geq T_1+T$ and $x\in {\mathbb R}^d$,
\begin{equation}\label{eq:lower-dist-2}
\begin{split}
E_x\left[{\mathbf P}_{B_{\sigma}}(L_{t-s}>R(t))|_{s=\sigma}; \sigma\leq T\right]
&\geq (c_*-\varepsilon)e^{-\sqrt{-2\lambda}\kappa}E_x\left[h(B_{\sigma}); \sigma\leq T\right]\\
&\geq  (c_*-\varepsilon)e^{-\sqrt{-2\lambda}\kappa}
E_x\left[e^{\lambda\sigma}h(B_{\sigma}); \sigma\leq T\right].
\end{split}
\end{equation}

Since $P_x\left(A_{\sigma\wedge T}^{(Q-1)\mu}=0\right)=1$ 
and $e^{\lambda t+A_t^{(Q-1)\mu}}h(B_t)$ is a $P_x$-martingale 
by noting that $e^{\lambda t}p_t^{(Q-1)\mu}h=h$, 
the optional stopping theorem yields that 
\begin{equation*}
E_x\left[e^{\lambda(\sigma\wedge T)}h(B_{\sigma\wedge T})\right]
=E_x\left[e^{\lambda(\sigma\wedge T)+A_{\sigma\wedge T}^{(Q-1)\mu}}h(B_{\sigma\wedge T})\right]
=h(x).
\end{equation*}
We also see that
\begin{equation*}
e^{\lambda T}E_x\left[h(B_T); T<\sigma\right]\leq e^{\lambda T}\|h\|_{\infty}
\end{equation*}
and thus 
\begin{equation*}
\begin{split}
&E_x\left[e^{\lambda\sigma}h(B_{\sigma}); \sigma\leq T\right]
=E_x\left[e^{\lambda(\sigma\wedge T)}h(B_{\sigma\wedge T}); \sigma\leq T\right]\\
&=E_x\left[e^{\lambda(\sigma\wedge T)}h(B_{\sigma\wedge T})\right]
-e^{\lambda T}E_x\left[h(B_T); T<\sigma\right]
\geq h(x)-e^{\lambda T}\|h\|_{\infty}.
\end{split}
\end{equation*}
Combining this with \eqref{eq:lower-dist-1} and \eqref{eq:lower-dist-2}, 
we get for all $t\geq T_1+T$ and $x\in {\mathbb R}^d$, 
\begin{equation}\label{eq:lower-dist-4}
e^{\sqrt{-2\lambda}\kappa}{\mathbf P}_x(L_t>R(t))
\geq (c_*-\varepsilon)\left(h(x)-e^{\lambda T}\|h\|_{\infty}\right).
\end{equation}

For any $L\in {\cal L}$ and $t\geq T_1+T$, we have by \eqref{eq:lower-dist-4},
\begin{equation*}
\inf_{x\in L}\frac{e^{\sqrt{-2\lambda}\kappa}}{h(x)}{\mathbf P}_x(L_t>R(t))
\geq (c_*-\varepsilon)\left(1-\frac{e^{\lambda T}\|h\|_{\infty}}{\inf_{x\in L}h(x)}\right).
\end{equation*}
Hence we obtain by letting  $t\rightarrow\infty$ and then $T\rightarrow\infty$, 
\begin{equation*}
\liminf_{t\rightarrow\infty}
\inf_{x\in L}\frac{e^{\sqrt{-2\lambda}\kappa}}{h(x)}{\mathbf P}_x(L_t>R(t))
\geq c_*-\varepsilon.
\end{equation*}
Since the right hand side above is independent of $L\in {\cal L}$, 
we get 
\begin{equation*}
\inf_{L\in {\cal L}}\liminf_{t\rightarrow\infty}
\inf_{x\in L}\frac{e^{\sqrt{-2\lambda}\kappa}}{h(x)}{\mathbf P}_x(L_t>R(t))
\geq c_*-\varepsilon.
\end{equation*}
We finally arrive at \eqref{eq:mtg-lower} by letting $\kappa\rightarrow\infty$ and then $\varepsilon\rightarrow 0$. 
\end{proof}

\begin{proof}[Proof of Theorem {\rm \ref{thm:lim-dist}}.] 
Since ${\mathbf P}_x(L_t<\infty)=1$ for any $t\geq 0$, 
there exists $r_1=r_1(\varepsilon,T_1)>0$ 
for any $\varepsilon\in (0,1)$ and $T_1>0$ such that 
\begin{equation}\label{eq:bound-1}
{\mathbf P}_x(L_{T_1}\leq r_1)\geq 1-\varepsilon.
\end{equation}
For any $t\geq T_1$, since 
\begin{equation*}
{\mathbf P}_x(L_t\leq R(t), L_{T_1}>r_1)
\leq {\mathbf P}_x(L_{T_1}>r_1)<\varepsilon, 
\end{equation*}
we have 
\begin{equation*}
\begin{split}
{\mathbf P}_x(L_t\leq R(t))
&={\mathbf P}_x(L_t\leq R(t), L_{T_1}\leq r_1)+{\mathbf P}_x(L_t\leq R(t), L_{T_1}>r_1)\\
&\leq {\mathbf P}_x(L_t\leq R(t), L_{T_1}\leq r_1)+\varepsilon
\end{split}
\end{equation*}
so that 
\begin{equation}\label{eq:vii-1}
\begin{split}
&{\mathbf P}_x\left(L_t>R(t)\right) 
-{\mathbf E}_x\left[1-\exp\left(-c_*e^{-\sqrt{-2\lambda}\kappa}M_t\right)\right]\\
&={\mathbf E}_x\left[\exp\left(-c_*e^{-\sqrt{-2\lambda}\kappa}M_t\right)\right]
-{\mathbf P}_x\left(L_t\leq R(t)\right) \\
&\geq 
{\mathbf E}_x\left[\exp\left(-c_*e^{-\sqrt{-2\lambda}\kappa}M_t\right);L_{T_1}\leq r_1\right]
-{\mathbf P}_x(L_t\leq R(t), L_{T_1}\leq r_1)-\varepsilon\\
&=
{\mathbf E}_x\left[\left\{\exp\left(-c_*e^{-\sqrt{-2\lambda}\kappa}M_t\right)
-{\bf 1}_{\{L_t\leq R(t)\}}\right\};L_{T_1}\leq r_1\right]
-\varepsilon={\rm (VII)}-\varepsilon.
\end{split}
\end{equation}
Here we recall that $e_0=\inf\{t>0 \mid Z_t=0\}$ is the extinction time. 
If $T_1\geq e_0$, then for any $t\geq T_1$, we have $M_t=0$ and $L_t=0$ by definition. 
Therefore, the Markov property implies that 
\begin{equation}\label{eq:vii-2}
\begin{split}
&{\rm (VII)}
={\mathbf E}_x\left[\left\{\exp\left(-c_*e^{-\sqrt{-2\lambda}\kappa}M_t\right)
-{\bf 1}_{\{L_t\leq R(t)\}}\right\};T_1<e_0, L_{T_1}\leq r_1\right]\\
&={\mathbf E}_x\left[\left\{{\mathbf E}_{{\mathbf B}_{T_1}}
\left[\exp\left(-c_*e^{-\sqrt{-2\lambda}\kappa}M_{t-T_1}\right)\right]
-{\mathbf P}_{{\mathbf B}_{T_1}}\left(L_{t-T_1}\leq R(t)\right)\right\};T_1<e_0,L_{T_1}\leq r_1\right].
\end{split}
\end{equation}

Let $R(t)=R_1^{(\kappa)}(t)$. For $T>0$ and $t>T$, let 
\begin{equation*}
F_{\kappa}(t,T):=\sqrt{\frac{-\lambda}{2}}T+\frac{d-1}{2\sqrt{-2\lambda}}
\log \left(\frac{t}{t-T}\right)+\kappa.
\end{equation*}
Then
\begin{equation*}
R(t)=\sqrt{\frac{-\lambda}{2}}(t-T)+\frac{d-1}{2\sqrt{-2\lambda}}\log (t-T)
+F_{\kappa}(t,T)
\end{equation*}
and 
\begin{equation*}
F_{\kappa}(t,T)\geq \sqrt{\frac{-\lambda}{2}}T+\kappa.
\end{equation*}
Hence by Proposition \ref{prop:conv-upper-0},
there exists $\kappa_0=\kappa_0(\delta)>0$ for any $\delta\in (0,c_*)$ such that 
if $\sqrt{-\lambda/2}T+\kappa\geq \kappa_0$, then 
\begin{equation*}
\sup_{L\in {\cal L}}
\limsup_{t\rightarrow\infty}\sup_{x\in L}
\left|\frac{e^{\sqrt{-2\lambda}F_{\kappa}(t,T)}}{h(x)}{\mathbf P}_x(L_{t-T}>R(t))-c_*\right|<\delta.
\end{equation*}

We take $T_1=T_1(\delta)>0$ so that $\sqrt{-\lambda/2}T_1+\kappa\geq \kappa_0$. 
Since $\overline{B_0(r_1)}\in {\cal L}$, 
there exists $T_2=T_2(\delta,\varepsilon, T_1)>0$ such that 
for any $t\geq T_1+T_2$ and $y\in \overline{B_0(r_1)}$,
\begin{equation}\label{eq:upper-tail-0}
\begin{split}
(c_*-\delta)e^{-\sqrt{-2\lambda}F_{\kappa}(t,T_1)}h(y)
\leq {\mathbf P}_y(L_{t-T_1}>R(t))
&\leq (c_*+\delta) e^{-\sqrt{-2\lambda}F_{\kappa}(t,T_1)}h(y)\\
&\leq (c_*+\delta) e^{-\sqrt{-2\lambda}\kappa+\lambda T_1} h(y).
\end{split}
\end{equation}

Note that $1-r\leq e^{-r}$ for any $r\in {\mathbb R}$ 
and there exists $r_0=r_0(\delta)>0$ such that 
$1-r\geq e^{-(1+\delta)r}$ for any $r\in [0,r_0]$. 
Here we further take $T_1=T_1(\delta)>0$ so large that 
\begin{equation*}
(c_*+\delta) e^{-\sqrt{-2\lambda}\kappa+\lambda T_1}\|h\|_{\infty}\leq r_0.
\end{equation*}
Then for any $t\geq T_1+T_2$ and $y\in \overline{B_0(r_1)}$, 
we have by \eqref{eq:upper-tail-0}, 
\begin{equation}\label{eq:upper-tail}
\begin{split}
&\exp\left(-(1+\delta)(c_*+\delta)e^{-\sqrt{-2\lambda}\kappa+\lambda T_1}h(y)\right)\\
&\leq 1-{\mathbf P}_y(L_{t-T_1}>R(t))
\leq 
\exp\left(-(c_*-\delta)e^{-\sqrt{-2\lambda}F_{\kappa}(t,T_1)}h(y)\right)\\
&=\exp\left(-(c_*-\delta)e^{-\sqrt{-2\lambda}\kappa+\lambda T_1}
\left(\frac{t-T_1}{t}\right)^{(d-1)/2}h(y)\right).
\end{split}
\end{equation}

Suppose that $T_1<e_0$ and $L_{T_1}\leq r_1$. 
Then $Z_{T_1}\geq 1$ and ${\mathbf B}_{T_1}^k\in \overline{B_0(r_1)}$ for any $k=1,\dots, Z_{T_1}$. 
Since
\begin{equation*}
{\mathbf P}_{{\mathbf B}_{T_1}}(L_{t-T_1}\leq R(t))
=\prod_{k=1}^{Z_{T_1}}{\mathbf P}_{{\mathbf B}_{T_1}^k}(L_{t-T_1}\leq R(t))
=\prod_{k=1}^{Z_{T_1}}(1-{\mathbf P}_{{\mathbf B}_{T_1}^k}(L_{t-T_1}>R(t)))
\end{equation*}
we obtain by \eqref{eq:upper-tail-0},
\begin{equation}\label{eq:upper-lower}
\begin{split}
\exp\left(-{(1+\delta)(c_*+\delta)e^{-\sqrt{-2\lambda}\kappa}M_{T_1}}\right)
&\leq {\mathbf P}_{{\mathbf B}_{T_1}}(L_{t-T_1}\leq R(t))\\
&\leq \exp\left(-(c_*-\delta)e^{-\sqrt{-2\lambda}\kappa}
\left(\frac{t-T_1}{t}\right)^{(d-1)/2}M_{T_1}\right).
\end{split}
\end{equation}
Since
\begin{equation*}
{\mathbf E}_{{\mathbf B}_{T_1}}\left[\exp\left(-c_*e^{-\sqrt{-2\lambda}\kappa} M_{t-T_1}\right)\right]
=\prod_{k=1}^{Z_{T_1}}{\mathbf E}_{{\mathbf B}_{T_1}^k}\left[\exp\left(-c_*e^{-\sqrt{-2\lambda}\kappa}M_{t-T_1}\right)\right],
\end{equation*}
we have by following the argument for \eqref{eq:upper-lower}, 
\begin{equation*}
\begin{split}
\exp\left(-{(1+\delta)(c_*+\delta)e^{-\sqrt{-2\lambda}\kappa}M_{T_1}}\right)
&\leq {\mathbf E}_{{\mathbf B}_{T_1}}\left[\exp\left(-c_*e^{-\sqrt{-2\lambda}\kappa} M_{t-T_1}\right)\right]\\
&\leq \exp\left(-{(c_*-\delta)e^{-\sqrt{-2\lambda}\kappa}\left(\frac{t-T_1}{t}\right)^{(d-1)/2}M_{T_1}}\right).
\end{split}
\end{equation*}
Hence by \eqref{eq:bound-1} and \eqref{eq:vii-2}, 
we see that for any $t\geq T_1+T_2$,  
\begin{equation*}
\begin{split}
{\rm (VII)}&\geq 
{\mathbf E}_x\left[\exp\left(-{(1+\delta)(c_*+\delta)e^{-\sqrt{-2\lambda}\kappa}M_{T_1}}\right)\right]\\
&-{\mathbf E}_x\left[\exp\left(-{(c_*-\delta)e^{-\sqrt{-2\lambda}\kappa}\left(\frac{t-T_1}{t}\right)^{(d-1)/2}M_{T_1}}\right)\right]
-\varepsilon.
\end{split}
\end{equation*}
The right hand side above goes to $0$ 
by letting $t\rightarrow\infty$, $T_1\rightarrow\infty$, 
$\delta\rightarrow 0$ and then $\varepsilon\rightarrow 0$, 
We therefore obtain by \eqref{eq:vii-1}, 
\begin{equation*}
\liminf_{t\rightarrow\infty}\left({\mathbf P}_x\left(L_t>R(t)\right) 
-{\mathbf E}_x\left[1-\exp\left(-c_*e^{-\sqrt{-2\lambda}\kappa}M_t\right)\right]\right)
\geq 0.
\end{equation*}
In the same way, we have
\begin{equation*}
\limsup_{t\rightarrow\infty}\left({\mathbf P}_x\left(L_t>R(t)\right) 
-{\mathbf E}_x\left[1-\exp\left(-c_*e^{-\sqrt{-2\lambda}\kappa}M_t\right)\right]\right)
\leq 0
\end{equation*}
so that the proof is complete. 
\end{proof}

\section{Proofs of Theorem \ref{thm:equiv} and Corollary \ref{thm:yaglom}}

Throughout this section, 
we impose Assumption \ref{assum:b} on 
the branching rate $\mu$ and branching mechanism ${\mathbf p}$. 
We write $R(t)$ for $R_2(t)$, 
$\delta \in (\sqrt{-\lambda/2}, \sqrt{-2\lambda})$ 
in \eqref{eq:rad-br-1} or for $R_3(t)$ in \eqref{eq:rad-br-2}.

\begin{proof}[Proof of Theorem {\rm \ref{thm:equiv}}.] 
Let $K$ be a compact set in ${\mathbb R}^d$.  
Then by \eqref{eq:moment}, 
\begin{equation}\label{eq:limsup}
\limsup_{t\rightarrow\infty}\sup_{x\in K}\frac{{\mathbf P}_x(L_t>R(t))}{{\mathbf E}_x[Z_t^{R(t)}]}\leq 1.
\end{equation}

By Lemma \ref{lem:second-moment} with $s=0$, 
there exist $c_1>0$ and $T_1\geq 1$ such that for any $x\in K$ and $t\geq T_1$, 
\begin{equation}\label{eq:second-upper}
E_x\left[\int_0^t e^{A_s^{(Q-1)\mu}}E_{B_s}\left[e^{A_{t-s}^{(Q-1)\mu}};|B_{t-s}|>R(t)\right]^2\,{\rm d}A_s^{\nu_R}\right]
\leq c_1\eta(t)^2. 
\end{equation}
By \eqref{eq:br-fk} and Lemma \ref{lem:exp-bound} with $s=0$, 
there exist positive constants $c_2$, $c_3$, $c_4$ and $T_2$ 
such that for any $x\in K$ and $t\geq T_2$, 
\begin{equation*}
\begin{split}
{\mathbf E}_x[Z_t^{R(t)}]
=E_x\left[e^{A_t^{(Q-1)\mu}};|B_t|>R(t)\right]
&\geq (h(x)-c_2e^{-c_3 t})\eta(t)\\
&\geq \left(\inf_{y\in K}h(y)-c_2e^{-c_3 t}\right)\eta(t)
\geq c_4\eta(t).
\end{split}
\end{equation*}
Hence by \eqref{eq:br-fk-1} and \eqref{eq:second-upper}, 
there exist $c_5>0$ and $T_3\geq 1$ such that 
for any $x\in K$ and $t\geq T_3$,
\begin{equation}\label{eq:second-first}
\begin{split}
{\mathbf E}_x[(Z_t^{R(t)})^2]
&={\mathbf E}_x[Z_t^{R(t)}]
+E_x\left[\int_0^t e^{A_s^{(Q-1)\mu}}E_{B_s}\left[e^{A_{t-s}^{(Q-1)\mu}};|B_{t-s}|>R(t)\right]^2\,{\rm d}A_s^{\nu_R}\right]\\
&\leq {\mathbf E}_x[Z_t^{R(t)}]+c_1\eta(t)^2
\leq {\mathbf E}_x[Z_t^{R(t)}]\left(1+c_5 \eta(t)\right).
\end{split}
\end{equation}
Then by \eqref{eq:moment},
\begin{equation*}
\frac{{\mathbf P}_x(L_t>R(t))}{{\mathbf E}_x[Z_t^{R(t)}]}
\geq \frac{{\mathbf E}_x[Z_t^{R(t)}]}{{\mathbf E}_x[(Z_t^{R(t)})^2]}
\geq \frac{1}{1+c_5\eta(t)}.
\end{equation*}
Noting that $\eta(t)\rightarrow 0$ as $t\rightarrow\infty$ by \eqref{eq:asymp-eta}, we get
\begin{equation*}
\liminf_{t\rightarrow\infty}\inf_{x\in K}\frac{{\mathbf P}_x(L_t>R(t))}{{\mathbf E}_x[Z_t^{R(t)}]}\geq 1.
\end{equation*}
The proof is complete by combining this inequality with \eqref{eq:limsup}.
\end{proof}

\begin{proof}[Proof of Corollary  {\rm \ref{thm:yaglom}}.] 
Since $Z_t^{R(t)}$ takes a nonnegative value, 
we have
\begin{equation*}
{\mathbf P}_x(Z_t^{R(t)}>1)+{\mathbf P}_x(L_t>R(t))
={\mathbf P}_x(Z_t^{R(t)}>1)+{\mathbf P}_x(Z_t^{R(t)}\geq 1)
\leq {\mathbf E}_x[Z_t^{R(t)}]
\end{equation*}
so that 
\begin{equation*}
{\mathbf P}_x(Z_t^{R(t)}>1 \mid L_t>R(t))
\leq \frac{{\mathbf E}_x[Z_t^{R(t)}]}{{\mathbf P}_x(L_t>R(t))}-1.
\end{equation*}
Hence the proof is complete  by Theorem \ref{thm:equiv}.
\end{proof}

\appendix
\section{Appendix}
\subsection{Upper bounds of an integral in \eqref{eq:deri-upper}}
\label{appendix:int}

We use the same notations as in Lemma \ref{lem:iii}. 
We now give upper bounds of the integral at the last line of \eqref{eq:deri-upper}. 
For simplicity, we replace $R-M$ by $R$ and assume that $R>M$. 
Let
$$v=\sqrt{-\lambda}u-\frac{R}{\sqrt{2}u}.$$
Since
$$u=\frac{v+\sqrt{v^2+2\sqrt{-2\lambda}R}}{2\sqrt{-\lambda}}, \quad 
\frac{{\rm d}u}{u}=\frac{{\rm d}v}{\sqrt{v^2+2\sqrt{-2\lambda}R}},$$
we have
\begin{equation}\label{eq:minus-part-2}
\begin{split}
&\int_{\sqrt{t}}^{\infty}e^{-(\sqrt{-\lambda}u-R/(\sqrt{2}u))^2}\frac{1}{u^{d+1}}\,{\rm d}u\\
&=c_1\int_{(\sqrt{-2\lambda} t-R)/\sqrt{2t}}^{\infty}
e^{-v^2}\frac{1}{(v+\sqrt{v^2+2\sqrt{-2\lambda}R})^d}\frac{1}{\sqrt{v^2+2\sqrt{-2\lambda}R}}\,{\rm d}v.
\end{split}
\end{equation}

Suppose first that $R\geq \sqrt{-2\lambda}t$.
If $(\sqrt{-2\lambda}t-R)/\sqrt{2t}\leq v\leq 0$, then 
$$\frac{1}{v+\sqrt{v^2+2\sqrt{-2\lambda}R}}
=\frac{\sqrt{v^2+2\sqrt{-2\lambda}R}-v}{2\sqrt{-2\lambda}R}\leq 
\frac{\sqrt{v^2+2\sqrt{-2\lambda}R}}{\sqrt{-2\lambda}R}$$
and
$$(v^2+2\sqrt{-2\lambda}R)^{(d-1)/2}\leq c_2(|v|^{d-1}+R^{(d-1)/2}).$$
Therefore,
\begin{equation*}
\begin{split}
\frac{1}{(v+\sqrt{v^2+2\sqrt{-2\lambda}R})^d}\frac{1}{\sqrt{v^2+2\sqrt{-2\lambda}R}}
&\leq c_3\frac{(v^2+2\sqrt{-2\lambda}R)^{(d-1)/2}}{R^d}\\
&\leq \frac{c_4}{R^{(d+1)/2}}\left(\frac{|v|^{d-1}}{R^{(d-1)/2}}+1\right),
\end{split}
\end{equation*}
which implies that 
\begin{equation*}
\begin{split}
&\int_{(\sqrt{-2\lambda} t-R)/\sqrt{2t}}^0
e^{-v^2}\frac{1}{(v+\sqrt{v^2+2\sqrt{-2\lambda}R})^d}\frac{1}{\sqrt{v^2+2\sqrt{-2\lambda}R}}\,{\rm d}v\\
&\leq c_5R^{-(d+1)/2} \left(R^{-(d-1)/2}
\int_{(\sqrt{-2\lambda} t-R)/\sqrt{2t}}^0
e^{-v^2}|v|^{d-1}\,{\rm d}v
+\int_{(\sqrt{-2\lambda} t-R)/\sqrt{2t}}^0
e^{-v^2}\,{\rm d}v\right)\\
&\leq c_6R^{-(d+1)/2}. 
\end{split}
\end{equation*}
Since
$$\int_0^{\infty}
e^{-v^2}\frac{1}{(v+\sqrt{v^2+2\sqrt{-2\lambda}R})^d}\frac{1}{\sqrt{v^2+2\sqrt{-2\lambda}R}}\,{\rm d}v
\leq c_7R^{-(d+1)/2}\int_0^{\infty} e^{-v^2}\,{\rm d}v,$$
we obtain by \eqref{eq:minus-part-2},
\begin{equation}\label{eq:negative-upper}
\int_{\sqrt{t}}^{\infty}e^{-(\sqrt{-\lambda}u-R/(\sqrt{2}u))^2}\frac{1}{u^{d+1}}\,{\rm d}u
\leq  c_8R^{-(d+1)/2}. 
\end{equation}

Suppose next that $R<\sqrt{-2\lambda}t$.
If $v\geq (\sqrt{-2\lambda}t-R)/\sqrt{2t}$, then 
$$\frac{1}{(v+\sqrt{v^2+2\sqrt{-2\lambda}R})^d}\frac{1}{\sqrt{v^2+2\sqrt{-2\lambda}R}}
\leq c_1 R^{-(d+1)/2}$$
and thus
\begin{equation*}
\begin{split}
&\int_{(\sqrt{-2\lambda} t-R)/\sqrt{2t}}^{\infty}e^{-v^2}
\frac{1}{(v+\sqrt{v^2+2\sqrt{-2\lambda}R})^d}\frac{1}{\sqrt{v^2+2\sqrt{-2\lambda}R}}\,{\rm d}v\\
&\leq c_2R^{-(d+1)/2}\int_{(\sqrt{-2\lambda} t-R)/\sqrt{2t}}^{\infty} e^{-v^2}\,{\rm d}v.
\end{split}
\end{equation*}
Then by \eqref{eq:minus-part-2},
\begin{equation}\label{eq:positive-upper}
\int_{\sqrt{t}}^{\infty}e^{-(\sqrt{-\lambda}u-R/(\sqrt{2}u))^2}\frac{1}{u^{d+1}}\,{\rm d}u
\leq c_3 R^{-(d+1)/2}\int_{(\sqrt{-2\lambda} t-R)/\sqrt{2t}}^{\infty} e^{-v^2}\,{\rm d}v.
\end{equation}

\subsection*{Acknowledgments}
The authors would like to thank 
Professor Ekaterina VI. Bulinskaya for sending them 
the reference \cite{B20}.

\end{document}